\newtheorem{thm}{Theorem}[section]
\newtheorem{theo}[thm]{Theorem}
\newtheorem*{thm*}{Theorem}
\newtheorem{dfn}[thm]{Definition} 
\newtheorem{defi}[thm]{Definition} 
\newtheorem*{dfn*}{Definition}
\newtheorem{cor}[thm]{Corollary}
\newtheorem*{cor*}{Corollary}
\newtheorem{prop}[thm]{Proposition} 
\newtheorem*{prop*}{Proposition} 
\newtheorem*{properties*}{Properties} 
\newtheorem{lem}[thm]{Lemma} 
\newtheorem{lemma}[thm]{Lemma} 
\newtheorem*{lem*}{Lemma}
\newtheorem*{claim*}{Claim} 
\newtheorem*{fact*}{Fact}
\newtheorem*{qst*}{Question}
\newtheorem*{pb*}{Problem}
\theoremstyle{remark}
\newtheorem*{algo*}{Algorithm} 
\newtheorem*{rem*}{Remark}
\newtheorem{rem}[thm]{Remark}
\newtheorem{remark}[thm]{Remark}
\newtheorem*{example*}{Example}
\newenvironment{SauveCompteurs}[1]{%
\newcommand{\monparametre}{#1}
\openexport{\monparametre_sauve}
  \Export{thm}\Export{section}\Export{subsection}\Export{subsubsection}
\closeexport}{}
\newenvironment{UtiliseCompteurs}[1]{%
\newcommand{\monparametre}{#1}
\openexport{\monparametre_aux}
  \Export{thm}\Export{section}\Export{subsection}\Export{subsubsection}
\closeexport
\PackageInfo{export}{\MessageBreak
Importations from \monparametre_sauve.xpt\MessageBreak}%
\InputIfFileExists{\monparametre_sauve.xpt}{\relax}{\relax}%
\renewcommand{\label}[1]{}
}{%
\PackageInfo{export}{\MessageBreak
Importations from \monparametre_aux.xpt\MessageBreak}%
\InputIfFileExists{\monparametre_aux.xpt}{\relax}{\relax}}
\newlength{\espaceavantspecialthm}
\newlength{\espaceapresspecialthm}
\newenvironment{specialthm*}[1]{
\vskip\espaceavantspecialthm \noindent \textbf{#1} \itshape}%
{\normalfont \vskip \espaceapresspecialthm}
\newcommand {\calA} {{\mathcal {A}}}   
\newcommand {\calB} {{\mathcal {B}}}   
\newcommand {\calC} {{\mathcal {C}}}   
\newcommand {\calD} {{\mathcal {D}}}
\newcommand {\calI} {{\mathcal {I}}}   
\newcommand {\calJ} {{\mathcal {J}}}
\newcommand {\calN} {{\mathcal {N}}}
\newcommand {\calU} {{\mathcal {U}}}   
\newcommand {\calV} {{\mathcal {V}}}
\newcommand {\ie}{i.e.\  }
\newcommand {\bbF} {{\mathbb {F}}}
\newcommand {\bbN} {{\mathbb {N}}}
\newcommand {\bbQ} {{\mathbb {Q}}}   
\newcommand {\bbR} {{\mathbb {R}}}
\newcommand {\bbZ} {{\mathbb {Z}}}   
\edef\@tempa#1#2{\def#1{\mathaccent\string"\noexpand\accentclass@#2 }}
\@tempa\rond{017}
\newcommand{\es}{\emptyset}
\renewcommand{\phi}{\varphi} 
\newcommand{\m} {^{-1}} 
\newcommand{\eps} {\varepsilon}
\newcommand {\ra} {\rightarrow}
\newcommand{\actson}{\curvearrowright}
\newcommand{\ol}[1]{\overline{#1}}
\newcommand{\dunion}{\sqcup}
\newcommand{\grp}[1]{{\langle #1 \rangle}}
\newcommand{\Disc}{\mathrm{d}}
\newcommand{\IET}{\mathrm{IET}}
\newcommand {\frakS}{\mathfrak{S}}
\newcommand{\cald}{\calD}
\newcommand{\calc}{\calC}
\newcommand{\id}{\mathrm{id}}
\newcommand{\supp}{\mathrm{supp}}
\renewcommand{\epsilon}{\varepsilon}
\renewcommand{\int}{\mathrm{int}}
\title{Free groups of interval exchange transformations are rare}
\author{Fran\c{c}ois Dahmani, Koji Fujiwara, Vincent Guirardel}
\begin{document}

\maketitle








\begin{abstract} 
  We study the group $\IET$ of all interval exchange transformations.
Our first main result is that 
the group generated by a generic pairs of elements of $\IET$
is not free (assuming a suitable irreducibility condition on the underlying permutation).
Then we prove that any connected Lie group isomorphic to a subgroup of $\IET$ is abelian.

Additionally, we show that $\IET$ contains no infinite Kazhdan group.
We also prove residual finiteness of finitely presented subgroups of $\IET$ 
and give an example of a two-generated subgroup of $\IET$ of exponential growth
that contains an isomorphic copy of every finite group, and which is therefore not linear.
\end{abstract}

Consider an interval. Break it into finitely many
subintervals. Rearrange these pieces in the interval (preserving the orientation) to obtain
a bijection. This is an interval exchange transformation. 
More precisely, an \emph{interval exchange transformation} of $[0,1)$ is 
a left-continuous bijection, with finitely many discontinuity points, that is piecewise a translation.
The set $\IET$ of all interval exchange transformations is a group for composition.

Interval exchange transformations have been widely studied for themselves, 
in the point of view of the dynamical system they individually  generate (for an introduction, see \cite[Chap. 14.5]{HaKa_introduction}). 
Considerably less is known about the global structure of the group $\IET$.

Basic test questions concern the possible subgroups of $\IET$. 
For instance, the answer to the following question raised by Katok is still unknown.
\begin{qst*}[Katok]
  Does $\IET$ contain a non-abelian free group ?
\end{qst*}

Our first result concerns the subgroups of $\IET$ isomorphic to a connected Lie group, a question raised by Franks.
Using rotations with disjoint support, one can easily realize the $n$-dimensional torus $\mathbb{T}^n$ as a subgroup of $\IET$ for any $n$
(therefore, one can also realize $\mathbb{R}^n$, and any connected abelian Lie group). 
We prove

\begin{UtiliseCompteurs}{Lie}
\begin{theo} 
  A connected Lie groups embeds as a subgroup of $\IET$ if and only if it is isomorphic to 
  $\mathbb{T}^n\times \mathbb{R}^m$.
\end{theo}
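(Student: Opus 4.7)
The ``if'' direction is constructive: partition $[0,1)$ into $n+m$ subintervals, place a rotation family parameterised by $\mathbb{T}$ on each of the first $n$ pieces, and on each of the remaining $m$ pieces a rotation by an irrational angle parameterised by $\mathbb{R}$. This yields an injective homomorphism $\mathbb{T}^n\times\mathbb{R}^m\to\IET$. Combined with the standard classification asserting that every connected abelian Lie group is of the form $\mathbb{T}^n\times\mathbb{R}^m$, the theorem reduces to showing that every connected Lie group $G$ admitting an injection $\phi:G\hookrightarrow\IET$ is abelian.

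The first observation is that $\phi(G)$ is \emph{divisible}: in a connected Lie group every element has $n$-th roots for every $n\geq 1$, since the group is covered by products of exponentials and $\exp(X)=\exp(X/n)^n$. More concretely, every one-parameter subgroup $t\mapsto\exp(tX)$ in $G$ produces a homomorphism $\alpha:\mathbb{R}\to\IET$ together with an explicit compatible system of $n$-th roots $\alpha(t/n)$. Since a connected Lie group is generated by the images of its one-parameter subgroups, it is enough to prove that any two one-parameter subgroups $\alpha,\beta:\mathbb{R}\to\IET$ lying in $\phi(G)$ commute pointwise.

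The core technical step, then, is a structural claim about one-parameter subgroups of $\IET$: the target is to show that such an $\alpha$ preserves a finite partition of $[0,1)$ into intervals on each of which $\alpha(t)$ acts as a rotation of angle linear in $t$. The heuristic is that admitting a consistent family of $n$-th roots for every $n$ constrains the orbit structure of each $\alpha(t)$ to be rotation-like and forces the discontinuity sets of $\alpha(t)$ to stabilise in a way that carves out a common partition. Once this is established for a single one-parameter subgroup, applying the same analysis inside $\phi(G)$ to products $\alpha(t)\beta(s)$ should force a common refinement of the partitions attached to $\alpha$ and $\beta$; on such a common partition both $\alpha(t)$ and $\beta(s)$ act by rotations on each piece and hence commute.

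The main obstacle is precisely this structure theorem and the compatibility between distinct one-parameter subgroups inside a single connected Lie subgroup of $\IET$. The Kazhdan-type result stated in the abstract disposes of those $G$ containing an infinite Kazhdan factor, in particular higher-rank semisimple Lie groups; but the delicate cases are non-Kazhdan, non-abelian groups such as the affine group $\mathbb{R}\rtimes\mathbb{R}$ or the universal cover of $SL(2,\mathbb{R})$, which must be ruled out by combinatorial/dynamical analysis of divisible subgroups of $\IET$ rather than by any soft representation-theoretic input. Completing this analysis, and turning the partition-compatibility heuristic into a rigorous argument, is where the bulk of the work lies.
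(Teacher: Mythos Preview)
Your ``if'' direction and the divisibility observation are correct, but the remainder is a plan rather than a proof, and the plan misses the paper's two key mechanisms. The structure theorem you propose for one-parameter subgroups --- a single finite partition preserved by all $\alpha(t)$, with rotation angles linear in $t$ --- is neither proved nor needed; the paper extracts from divisibility only the elementwise statement that each divisible $h$ has $\|h\|=0$ and is therefore conjugate (by a conjugator depending on $h$) to a virtual multi-rotation (Corollary~\ref{cor_growth}). You also treat $\widetilde{SL_2(\bbR)}$ and the affine group as the same kind of obstacle, but the paper separates them sharply: any non-solvable connected Lie group contains a non-abelian free subgroup generated by elements in the image of $\exp$, hence divisible, hence conjugate to virtual multi-rotations, and Theorem~\ref{thm_no_MR_in_free} (no virtual multi-rotation lies in a non-abelian free subgroup of $\IET$, proved by manufacturing small-support elements via iterated commutators with the multi-rotation and then conjugating them to have disjoint supports) gives an immediate contradiction. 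This disposes of every non-solvable $L$ uniformly; Kazhdan plays no role.

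For the remaining solvable case, your ``common refinement'' heuristic is not an argument: even if $\alpha$ and $\beta$ each preserved a partition, nothing you wrote forces a common invariant one, and products $\alpha(t)\beta(s)$ are not one-parameter subgroups, so ``the same analysis'' does not apply to them. The paper instead isolates a metabelian piece (an extension of $\bbR$ or $\bbS^1$ by a closed connected abelian kernel $\calA$) and invokes Lemma~\ref{lem_csa}: if $S,T$ are each conjugate to an irrational multi-rotation and $S^nTS^{-n}$ commutes with $T$ for all $n$, then $[S,T]=1$. Normality and abelianness of $\calA$ supply exactly this hypothesis when $T\in\calA$ and $S$ is a lift of a generator of the quotient, forcing the extension to be abelian; an induction on the derived length then finishes.
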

\end{UtiliseCompteurs}

The embedding in the statement is purely algebraic, it is not assumed to have any continuity property.
 A similar result was recently independently obtained by Novak \cite{Novak_free, Novak_continuous}.
It is also known that there is no finitely generated subgroup of $\IET$ that has a distorted element for a word metric \cite{Novak_discontinuity}. 
This exclude most nilpotent groups, but does not exclude $SO_3(\bbR)$.
\\

Because translations commute, the orbit of any point under a finitely generated subgroup of $\IET$
has polynomial growth. 
In particular, a possible free subgroup of $\IET$ cannot be produced by controling only the action on a single point,
hence this kind of ping-pong will fail.
Following an observation by Witte-Morris, we show that this implies that 
the image in $\IET$ of any finitely generated Kazhdan group is finite.

But polynomial growth of orbits does not imply polynomial growth of the group 
and we produce an example of subgroup of $\IET$
containing a free semigroup.
\begin{UtiliseCompteurs}{example}
\begin{thm}  
  There is a $2$-generated subgroup $G$ of $\IET$ such that
  \begin{itemize}
  \item $G$ contains a free semigroup
  \item $G$ contains an isomorphic copy of every finite group.
  \end{itemize}
In particular, $G$ is not linear.
\end{thm}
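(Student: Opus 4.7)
The plan is to exhibit $G$ as $\langle f, g\rangle$ for an explicit pair of interval exchange transformations and then verify each clause separately. A natural recipe takes $f$ with rich orbital behaviour (for instance an irrational rotation $R_\alpha$, or a dyadic odometer on $[0,1)$) and $g$ a ``local'' IET supported on a short interval, such as a transposition of two tiny subintervals. The iterates $f^k g f^{-k}$ then produce a large family of involutions with controlled supports. For the first clause, by Cayley's theorem it suffices to embed each finite symmetric group $S_n$; for a given $n$, I would look for a chain of $n$ disjoint intervals $J_1,\ldots,J_n$ of equal length and elements $\sigma_1,\ldots,\sigma_{n-1}\in G$, each a short word in $f, g$ (typically of the form $f^{k_i} g f^{-k_i}$), such that $\sigma_i$ acts as the transposition $J_i \leftrightarrow J_{i+1}$ and fixes every other $J_j$. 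Since adjacent transpositions generate $S_n$, this embeds $S_n$ into $G$, and doing this for every $n$ embeds every finite group.

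For the free semigroup clause I would use a dynamical ping-pong-type argument. The goal is to construct two elements $u, v \in G$ whose dynamics on $[0,1)$ (or on a suitable auxiliary space built from $[0,1)$, such as the action on measurable subsets) satisfy a nested semigroup Schottky condition ensuring that no two distinct positive words in $u, v$ define the same IET. In practice one looks for two dynamically ``expanding'' elements whose attracting behaviour concentrates distinct regions of $[0,1)$ into disjoint short subintervals, so that a positive word applied to an appropriate witness point reveals its spelling.

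For non-linearity, suppose for contradiction that $G \hookrightarrow GL_N(K)$ for some field $K$ and integer $N$. Pick a prime $\ell \ne \mathrm{char}(K)$. Since $G$ contains every finite group, it contains an elementary abelian $\ell$-group $(\bbZ/\ell)^k$ for every $k$ (sitting inside $S_{\ell^k}$). On the other hand, any finite abelian subgroup of $GL_N(K)$ whose order is coprime to $\mathrm{char}(K)$ is simultaneously diagonalizable over $\bar K$, so an elementary abelian $\ell$-subgroup of $GL_N(K)$ injects into $(\mu_\ell)^N$ and therefore has at most $\ell^N$ elements, forcing $k \le N$. Taking $k > N$ yields a contradiction.

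The central difficulty is coordinating both structural requirements in the \emph{same} pair $(f, g)$: unboundedly many symmetric groups demand many disjointly supported involutions readable as short words in $f, g$, while a free semigroup demands genuine dynamical expansion from another pair of words in $G$. Producing a single $(f, g)$ visible from both perspectives, and then performing the explicit bookkeeping to identify the $\sigma_i$'s for each $n$ and the ping-pong configuration for $u, v$, is where the bulk of the technical work will sit.
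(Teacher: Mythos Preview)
Your overall architecture is close to the paper's: an irrational rotation paired with a locally supported involution, conjugates to realize symmetric groups, and a separate argument for a free semigroup. Your non-linearity deduction via bounded rank of elementary abelian $\ell$-subgroups of $GL_N(K)$ is correct and is a perfectly good alternative to the paper's use of a uniform bound on torsion orders in finitely generated linear groups.

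There is, however, a genuine gap in the free-semigroup clause. Interval exchange transformations are piecewise \emph{isometries}; they preserve Lebesgue measure and have no expanding or attracting dynamics whatsoever. A Schottky scheme based on ``concentrating regions into disjoint short subintervals'' simply cannot be realized by elements of $\IET$. The paper's argument is of a completely different nature. Working on a domain $\calC\cup\calJ$ (a circle of perimeter $2$ together with an interval), with $r$ an irrational rotation on $\calC$ and $s$ the involution swapping $\calJ$ with a half-circle $\calI\subset\calC$, it shows that $r$ and $r'=srs$ generate a free semigroup by tracking \emph{discontinuity points}: for any positive word $W$ in $r,r'$, one proves that $Wr$ is continuous at the endpoint $p$ of $\calI$ while $Wr'$ is discontinuous at $p$, so the first letter of the word can be read off and one inducts on length. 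This is a combinatorial argument about discontinuity sets, not a metric ping-pong, and something of this flavour is what you need.

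A smaller point on the symmetric groups: if $g$ swaps two fixed short intervals and $f$ is a rotation, then $f^kgf^{-k}$ swaps the \emph{pair} of translated intervals---both move together---so these conjugates are not adjacent transpositions $J_i\leftrightarrow J_{i+1}$ on a fixed chain. The paper first manufactures (via iterated commutators in $\grp{r,s}$) an involution $\sigma$ swapping one short interval $E\subset\calJ$ with one adjacent short interval $F\subset\calC$; conjugating $\sigma$ by powers of $r^2$ moves only the $\calC$-side, and this is what produces the copy of $\Sigma_{n+2}$.
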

  \end{UtiliseCompteurs}

In the context of connected Lie groups, 
the existence of a free subgroup of a connected lie group $G$
implies that a generic subgroup is free.
Here, a generic subset is a subset containing a countable intersection of dense open subsets. 
Indeed, for each reduced word in two letters and their inverses, the subset  
$V_w\subset G\times G$ of pairs $(a,b)\in G\times G$ with $w(a,b)=1$ is
 a proper analytic subset of $G\times G$ (it is not $G\times G$ because there exists a free subgroup). 
Since $G$ is connected, $V_w$ has empty interior.
Baire's theorem then implies that a generic
subset of $G\times G$ consists of generators of free non-abelian
subgroups.

There is a natural topology on $\IET$ that allows to talk about genericity.
Given a permutation $\sigma$ of  $\{1,\dots,n\}$, the set $\IET_\sigma$ of all interval exchange transformations 
with $n-1$ points of discontinuity and with underlying permutation $\sigma$ is naturally in bijection with the $n$-dimensional open simplex.
This gives a natural topology on $\IET_\sigma$. 
Since $\IET$ is the disjoint union of all $\IET_\sigma$,
this defines a topology on $\IET$ by declaring all $\IET_\sigma$ open in $\IET$.
For this topology, a generic subset of $\IET$ is a subset that intersects each $\IET_\sigma$ in a generic subset.
Note however that this does not make $\IET$ a topological group as the group law is discontinuous.

The genericity argument used above for a connected Lie group does not work in $\IET$.
Indeed, the subset $V_w\subset\IET\times \IET$ is not an analytic subset, but looks like a union of polyhedra,
and can have non-empty interior. 
In fact, we prove that the situation is opposite to that of non-solvable connected Lie groups: 
if one restricts to \emph{admissible} permutations (as defined below), the group generated by a generic pair of elements is not free.
This is a begining of an explanation of the fact that free subgroups of $\IET$  seem difficult to find.

\begin{UtiliseCompteurs}{gnf}
\begin{theo}
Let $\IET_a$ be the set of interval exchange transformations whose underlying permutation is admissible.

Then there is a dense open subset of $\IET \times \IET_a$ such that the group generated by any pair in this subset is not free.
\end{theo}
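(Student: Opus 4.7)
The plan is to produce, on a dense open subset of $\IET \times \IET_a$, a nontrivial relation in $\langle a, b\rangle$ by means of disjoint supports: exhibit two elements $u(a,b), v(a,b) \in \langle a, b\rangle$ whose evaluations in $\IET$ have disjoint supports in $[0,1)$. They then commute, so $[u(a,b),v(a,b)] = \id$ holds in $\IET$, while $[u(x,y),v(x,y)]$ is nontrivial in $F_2$ provided $u(x,y)$ and $v(x,y)$ are not both powers of a common element of $F_2$. Openness of the resulting non-free locus is then built in: as long as the combinatorial type of each intermediate composition is fixed, the support intervals are piecewise-linear functions of the length parameters, so strict disjointness of supports persists on an open neighborhood of any pair where it holds.

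To address density, I would stratify $\IET \times \IET_a$ as the disjoint union of open simplices $\IET_\sigma \times \IET_\tau$ indexed by pairs of underlying permutations (with $\tau$ admissible), and argue inside a fixed stratum. The key construction is a word $u_0(a,b) \in \langle a, b\rangle$ with \emph{small support} contained in a short subinterval $J \subset [0,1)$. Since admissible IETs typically have full support, one cannot take $u_0 = a$ or $u_0 = b$; instead one builds $u_0$ by iterated commutators, or by a Rauzy--Veech-style induction realizing a first-return map of $b$ to a short subinterval as an element of $\langle a, b\rangle$. Conjugating $u_0$ by two elements $w_1, w_2 \in \langle a, b\rangle$ that translate $J$ into disjoint positions of $[0,1)$ then produces $u = w_1 u_0 w_1^{-1}$ and $v = w_2 u_0 w_2^{-1}$ with disjoint supports, giving the relation $[u,v]=\id$ in $\IET$.

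The main obstacle is establishing density. For a fixed combinatorial choice of $u_0, w_1, w_2$, the disjointness of supports is cut out by a finite system of strict linear inequalities in the length coordinates, defining an open polyhedral subregion of the stratum, but individually such a subregion may be small and bordering the stratum boundary. One must show that by letting the combinatorial data $(u_0, w_1, w_2)$ vary over a countable family of candidates, the union of the corresponding subregions exhausts a dense subset of the stratum. The admissibility hypothesis on $\tau$ should be essential here: it guarantees that the iterates of $b$ spread throughout $[0,1)$ in sufficiently many combinatorial patterns that, at any length vector and after an arbitrarily small perturbation, some choice of the construction yields strictly disjoint supports. Taking the union over all strata then produces the required dense open subset on which non-freeness holds.
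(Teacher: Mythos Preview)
Your high-level strategy matches the paper's: build an element $U\in\langle S,T\rangle$ with small support, conjugate it to get a disjoint copy, and deduce a nontrivial commutation relation.  However, two concrete mechanisms are missing, and without them the argument does not go through.

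First, the paper does not obtain small support via Rauzy--Veech induction or generic iterated commutators.  Instead it uses the density of \emph{$q$-rational} IETs: if $S_0,T_0$ have all break points in $\frac1q\bbZ$ then $S_0^{q!}=\id$, so for $(S,T)$ in a small neighbourhood the element $U=[S^{q!},TS^{q!}T^{-1}]$ is supported in the $\epsilon$-neighbourhood of $\frac1q\bbZ$ (Lemma~\ref{lem;small_supp_comm}).  This gives, for free, a specific word $u_0$ and an explicit description of its support, uniform on an open set.  Your proposal leaves the construction of $u_0$ unspecified, and a first-return map is in general not an element of $\langle S,T\rangle$.

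Second, and more seriously, you acknowledge that ``the admissibility hypothesis on $\tau$ should be essential'' but do not identify how.  The paper's answer is the notion of a \emph{drift vector}: a permutation $\tau$ is admissible if and only if one can perturb the interval lengths in a direction $\dl$ (with $\sum dl_i=0$) so that \emph{every} translation length strictly increases (Proposition~\ref{prop;admis_is_moveable}).  Perturbing $T_0$ to $T_\theta=T_0+\theta\dl$ then guarantees that, modulo $\frac1q$, all translation lengths of $T$ lie in a short positive interval $[\theta\,dr_{\min}/2,\,2\theta\,dr_{\max}]$, so a suitable power $T^k$ pushes $N_\epsilon(\frac1q\bbZ)$ off itself.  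This is exactly the $w_1,w_2$ step in your outline, but your description (``iterates of $b$ spread throughout $[0,1)$ in sufficiently many combinatorial patterns'') does not capture the uniformity needed: one must move \emph{all} $q$ components of the support simultaneously by the same displacement mod $\frac1q$, and it is precisely the equivalence admissible $\Leftrightarrow$ driftable that makes this possible.  Without this characterization, density of the non-free locus remains unproved.
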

\end{UtiliseCompteurs}

In this statement, a permutation of $\{1,\dots, n\}$ is \emph{admissible} if there is no $m$ such that $\sigma(m)=m$, and $\{1,\dots, m-1\}$ 
$\sigma$-invariant.
The subset $\IET_a\subset \IET$ is the (open) set of all transformations in $\IET$ with admissible underlying permutation.
\\

Finally, we prove a simple additional result for subgroups of $\IET$ that excludes the existence of Thompson groups
in $\IET$.
\begin{UtiliseCompteurs}{RF}
  \begin{thm}
    Any finitely presented subgroup of $\IET$ is residually finite.

    More generally, any finitely generated subgroup is a limit of
    finite groups in the space of marked groups.  In particular,
    $\IET$ is sofic.
  \end{thm}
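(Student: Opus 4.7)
The plan is to prove the stronger statement that every finitely generated subgroup $H=\langle f_1,\dots,f_k\rangle\leq\IET$ is a limit of finite groups in the space of $k$-marked groups. From this, for $H$ finitely presented, taking $L$ larger than both the longest defining relator and the word length of any given nontrivial $h\in H$ yields an approximating finite group that receives a surjection from $H$ and in which $h$ still survives, so $H$ is residually finite. Soficity of $\IET$ then follows since LEF implies sofic and soficity depends only on finitely generated subgroups.

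\textbf{Construction.} Fix $L\geq 1$. Each of the finitely many words $w$ of length $\leq L$ in $f_1^{\pm 1},\dots,f_k^{\pm 1}$ gives an IET $w(f_1,\dots,f_k)$ with finitely many breakpoints and translations. All these values, together with $1$, generate a finite-dimensional $\bbQ$-subspace $V\subset\bbR$. Fix a $\bbQ$-basis $e_1=1,e_2,\dots,e_d$ of $V$, a parameter $\eta>0$, and rationals $\tilde e_j\in\tfrac{1}{N}\bbZ$ (with common denominator $N$) satisfying $\tilde e_1=1$ and $|\tilde e_j-e_j|<\eta$ for $j\geq 2$. Let $\phi:V\to\bbQ$ be the $\bbQ$-linear map $e_j\mapsto\tilde e_j$. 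For any IET $f$ with breakpoints and translations in $V$, let $\tilde f$ be the IET obtained by keeping the same underlying permutation but replacing every breakpoint $a$ by $\phi(a)$ and every translation $t$ by $\phi(t)$.

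\textbf{Composition compatibility.} The heart of the argument is that composition of IETs is piecewise $\bbQ$-linear: the breakpoints of $f\circ g$ are either breakpoints of $g$ or of the form $a-s$ with $a$ a breakpoint of $f$ and $s$ a translation of $g$, the translations of $f\circ g$ are sums of translations of $f$ and $g$, and the entire combinatorial structure of $f\circ g$ is governed by finitely many strict inequalities among elements of $V$. For $\eta$ small enough that none of the finitely many relevant inequalities is broken, a direct check yields $\widetilde{f\circ g}=\tilde f\circ\tilde g$. Iterating, $w(\tilde f_1,\dots,\tilde f_k)=\widetilde{w(f_1,\dots,f_k)}$ for every word $w$ of length $\leq L$. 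Moreover, since $\phi$ is close to the inclusion $V\hookrightarrow\bbR$, for $\eta$ small it sends each of the finitely many nonzero translations of the IETs $w(f_1,\dots,f_k)$ ($|w|\leq L$) to a nonzero rational; hence $w(\tilde f_1,\dots,\tilde f_k)=\id$ if and only if $w(f_1,\dots,f_k)=\id$.

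\textbf{Finite group and conclusion.} IETs whose breakpoints and translations all lie in $\tfrac{1}{N}\bbZ$ permute the $N$ atoms $[i/N,(i+1)/N)$ of $[0,1)$, and this action identifies the subgroup of all such IETs with $S_N$. In particular $\langle\tilde f_1,\dots,\tilde f_k\rangle$ is finite, and as a marked group it agrees with $(H,(f_1,\dots,f_k))$ on all words of length $\leq L$. Letting $L\to\infty$ exhibits $H$ as a limit of finite groups. The principal technical obstacle is the composition-compatibility claim: one must verify that when iterating up to $L$ compositions, no perturbed breakpoint crosses a perturbed atom boundary that it did not cross originally. This is a finite combinatorial verification, and the required smallness of $\eta$ is uniform once $L$ is fixed.
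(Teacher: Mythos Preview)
Your proof is correct and takes essentially the same approach as the paper: both exploit the piecewise $\bbQ$-linear structure of the group law on $\IET$ to replace the generators by nearby rational IETs without changing which words of length $\leq L$ are trivial, and then observe that rational IETs generate finite groups. The only difference is in presentation---the paper proves an abstract lemma that composition $\IET_{\leq n}\times\IET_{\leq m}\to\IET_{\leq n+m}$ is a PL map and then invokes the general fact that a nonempty rational polyhedron contains a rational point, whereas your explicit $\bbQ$-linear perturbation $\phi$ constructs that rational point directly (with the pleasant side effect that $\bbQ$-linear equalities among the data, hence any cancellations of discontinuities in compositions, are preserved automatically, leaving only the finitely many strict inequalities to control).
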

\end{UtiliseCompteurs}

The methods of this paper are rather elementary, and the paper is almost
self-contained. 
\\

Our first observation, similar to that in \cite{Novak_discontinuity},
 allows to establish that  elements admitting roots of arbitrarily high order are conjugated to so called \emph{virtual multi-rotations}. 
Denoting by $\Disc(h)$ the number of discontinuity points of an IET $h$, define its growth rate $||h||=\lim_{n\ra\infty} \frac1n\Disc(h^n)$.
Obvious properties are that that $||h||$ is a conjugacy invariant, $||h||\leq\Disc(h)$, and $||h^k||=k||h||$ for $k\geq 0$.
We prove that if one allows to change $[0,1)$ to a more complicated domain (a union of circles and intervals),
then there any IET has a conjugate that is optimal with respect to the growth of its number of discontinuity points:
\begin{UtiliseCompteurs}{discont}
\begin{prop}
  For any $h\in \IET$, there exists another domain $\cald_m$, and an IET $h_m$ conjugate to $h$ such that
$\Disc(h_m)=||h_m||=||h||$.
\end{prop}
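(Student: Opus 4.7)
My approach is to modify the domain of $h$ by strategically re-gluing the endpoints of its pieces of translation, so as to eliminate all ``cancellable'' discontinuities. Viewing $h$ as partitioning $[0,1)$ into translation pieces $I_1,\ldots,I_k$ on which $h$ is a fixed translation, the idea is to re-glue the endpoints of the $I_j$'s in a different combinatorial pattern to produce a domain $\cald_m$ (a disjoint union of intervals and circles) on which the conjugate $h_m$ has no redundant discontinuities. Since $\Disc(h^n)$ is subadditive in $n$, Fekete's lemma gives $||h||=\inf_n \Disc(h^n)/n$, and the desired equality $\Disc(h_m)=||h_m||$ is equivalent to the absence of cancellations in the iterates, i.e.\ $\Disc(h_m^n)=n\Disc(h_m)$ for all $n$. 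The guiding example is the rotation by $1/2$ on $[0,1)$: it has one interior discontinuity but its square is the identity, so $||h||=0$; identifying $0$ with $1^-$ to form a circle produces a conjugate $h_m$ (still a rotation) with no discontinuities.

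The surgery in general is: whenever two discontinuities $d,d'$ of $h$ are \emph{compatible}, in the sense that re-gluing the right end of the piece left of $d$ to the left end of the piece right of $d'$ (together with the complementary re-routing) yields a continuous map at the new seams both on the source and target sides of $h$, I perform this re-gluing. Combinatorially, this removes two discontinuities at once and may create a new circle component in $\cald_m$; the translations on each $I_j$ are untouched, so the resulting $h_m$ is set-theoretically conjugate to $h$ via the obvious bijection. Each such surgery strictly decreases $\Disc$, so the procedure terminates after finitely many steps.

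At termination, if $h_m$ still had a cancellation in some iterate $h_m^n$, then tracking the orbit responsible for the cancellation would reveal a still-available compatible pair at the level of $h_m$ itself, contradicting termination. Hence the final $h_m$ on $\cald_m$ satisfies $\Disc(h_m)=||h_m||=||h||$ as required.

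\textbf{Main obstacle.} The crux is defining the right notion of ``compatible pair'': the re-gluing has to be globally consistent in that the source and target modifications correspond correctly under $h$ (since the conjugate must still be a bijection that is piecewise a translation), and the notion has to be strong enough that any cancellation appearing at any iterate $h_m^n$ corresponds to a surviving compatible pair at level one. This requires careful combinatorial bookkeeping of the endpoints of the $I_j$'s and $J_i$'s, together with a dynamical argument tracking forward and backward orbits of the discontinuities under $h$ to reduce high-order cancellations to first-order pairings.
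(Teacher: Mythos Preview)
Your proposal is a plan rather than a proof: the step you label as the ``main obstacle'' --- defining compatible pairs precisely and proving that their absence rules out cancellations in \emph{every} iterate --- is exactly the content of the proposition, and you have not supplied it. Everything else (subadditivity, Fekete, the conjugacy invariance of $||\cdot||$) is preliminary.

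There is also a structural mismatch with how the paper actually closes this gap, and it suggests your gluing-only approach may not work without an additional ingredient. The paper organizes the argument through the suspension surface $\Sigma(h)$ and proceeds in two phases. First it \emph{splits} the domain at certain points to eliminate all ``boundary connections'' (orbits running from $\Delta(h^{-1})$ forward into $\Delta(h)$, including the case $\Sing(h)=\Delta(h)\cap\Delta(h^{-1})\neq\emptyset$). Only after this normalization does the key reduction hold: if $h^k$ is continuous at some $x\in\Delta(h)$, then the two half-orbits $h^i(x^+)$ and $h^i(x^-)$ stay entirely on $\partial\Sigma$ until they merge, producing a ``fake boundary'' that can be removed by a gluing move along the whole orbit segment $h(x),\dots,h^{k-1}(x)$. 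The absence of boundary connections is used essentially here; without it the orbit can wander through interior points and the reduction from an $n$-th-order cancellation to a first-order gluing breaks down. Your proposal attempts to go straight to gluing and to pair off discontinuities two at a time; even your own guiding example (the half-rotation on $[0,1)$) has a \emph{single} discontinuity, so the surgery you describe does not literally apply there --- the actual move glues boundary points of the domain, not a pair of points of $\Delta(h)$.

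In short: either supply the missing dynamical/combinatorial argument that a surviving $n$-th-order cancellation forces a first-order compatible pair (and be prepared for the possibility that this requires an intermediate normalization analogous to the paper's splitting step), or adopt the suspension viewpoint, which gives a clean invariant (boundary connections, fake boundaries) whose vanishing you can verify inductively and which directly yields $\Disc(h_m^n)=n\,\Disc(h_m)$.
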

\end{UtiliseCompteurs}
This implies that $||h||$ is an integer. In particular, any divisible element satisfies $||h||=0$, and is
therefore conjugate to a continuous IET: a \emph{virtual multi-rotation}.
This can be used to compute centralizers as in \cite{Novak_discontinuity} (see Corollary \ref{cor_centralizer}).

We then argue that no non-abelian free group contains a multi-rotation. In fact, for every multi-rotation $S$, 
and for every interval exchange transformation $T$, 
we can play a game of computing commutators to obtain an interval exchange transformation with small support 
in the generated subgroup $\langle S,T \rangle$. Then, one can conjugate this element to an element with disjoint support.
This exhibits a commutation relation that 
 prevents $\langle S,T \rangle$   from being free non-abelian.

Since connected Lie groups have mostly elements admitting roots of arbitrarily high order, if they are realized as subgroup of $\IET$ they cannot contain any non-abelian free group, therefore they must be solvable. This already rules out groups like $SO_3(\bbR)$. 
For solvable connected Lie groups, we investigate the metabelian case to conclude that they must be abelian.

In order to address our genericity theorem, 
we argue that interval exchange transformations whose subdivision points are well approximated by rational 
numbers almost behave like those ones with rational subdivision points. 
The latter are of finite order, and this allows, again, using commutators, to obtain elements with small supports. 
The next step, again, is to find elements with disjoint supports, to deduce commutation relations.   
This is where we need the assumption that  the underlying permutation of at least one of the elements is admissible.

The paper is organized as follows.
Section \ref{sec_prelim} is mainly devoted to definitions concerning IETs.
Section \ref{sec_discont} constructs our minimal model for discontinuity points.
Section \ref{sec_rotation} shows that multi-rotations cannot be contained in a free group.
Section \ref{sec_Lie} classifies which Lie groups are subgroups of $\IET$.
Section \ref{sec_generic} shows that generically, pairs of elements do not generate a free group.
Sections \ref{sec_lattice} and \ref{sec_RF} show that infinite Kazhdan groups 
and Thompson groups are not subgroups of $\IET$.
Section \ref{sec_example}  explains the example of a $2$-generated subgroup of exponential growth containing all finite groups.

 We would like to thank A.\ Katok, D.\ Calegari and D.\ Witte-Morris for related discussions.
We also thank the referee for useful suggestions.
The second author is in part supported by 
Grant-in-Aid for Scientific Research (No. 19340013).
He also acknowledges the hospitality 
of Institut de Math\'ematiques de Toulouse.


\section{Preliminaries}
\label{sec_prelim}

\subsection{Interval exchange transformations}
A \emph{domain} $\cald$
 is a disjoint finite union of circles of the form $\bbR/l\bbZ$ and of semi-open bounded intervals $[\alpha,\beta)$.
Each component of $\cald$ carries a natural metric, and an orientation.

An \emph{interval exchange transformation} $h:\cald\ra \cald'$ 
between two domains $\cald,\cald'$ is a bijective  
map $h:\cald\ra \cald'$ 
which is piecewise isometric, orientation preserving, 
 continuous on the right, and with only finitely many discontinuity points.
We denote by $\IET(\cald,\cald')$ the set of interval exchange transformation 
from $\cald$ to $\cald'$.
Interval exchange transformations may be composed, and 
 the set $\IET(\cald,\cald)$ is a group which we denote by $\IET(\cald)$
(in fact, the set of all interval exchange transformations has a structure of a groupoid, but we won't make use of 
this terminology). 
We also define $\IET=\IET([0,1))$

The \emph{continuity intervals} of $h$ are the maximal connected subsets of $\cald$ on which $h$ is continuous.
These are finitely many semi-open intervals or circles that partition $\cald$.

We say that $g\in \IET(\cald)$ and $g'\in \IET(\cald')$ are \emph{conjugate} if there exists $h\in \IET(\cald,\cald')$ with $g'=hgh\m$.
There exists an interval exchange transformation $h$ between $\cald$ and $\cald'$ if and only if $\cald,\cald'$ have the same total length.
In this case, conjugation by $h$ induces an isomorphism between $\IET(\cald)$ and $\IET(\cald')$.
Note that even if $\cald$ and $\cald'$ don't have the same total length, conjugation by a homothety shows that $\IET(\cald)\simeq\IET(\cald')$.

A \emph{subdomain} of $\cald$ is a subset of $\cald$ which is a finite union of semi-open intervals and circles.
Interval exchange transformations map subdomains to subdomains.
We define the \emph{support} of $g\in \IET(\cald)$ as $\supp(g)=\{x\in \cald| g(x)\neq x\}$
(and not its closure). It is a subdomain of $\cald$, and is natural under conjugation: $\supp(hgh\m)=h(\supp(g))$.

\subsection{Multi-rotations}

If $\cald$ is a circle, a continuous interval exchange is a \emph{rotation}.
We define a \emph{multi-rotation} as an interval exchange transformation of some domain $\cald$,
which preserves each component on $\cald$,
and which restricts to a rotation on each circle of $\cald$, and to the identity on each segment.
More generally, a \emph{virtual multi-rotation}
is any \emph{continuous} element of $\IET(\cald)$.
Clearly, multi-rotations are virtual multi-rotations, and any virtual multi-rotation has a power which is
a multi-rotation.

Say that a multi-rotation $R$ is \emph{irrational} if
$R$ has infinite order, and
the restriction of $R$ to each circle 
 is either the identity or has infinite order (\ie is an irrational rotation).
Note that any multi-rotation of infinite order has a power which is an irrational multi-rotation.
If $R$ is conjugate to an irrational multi-rotation, then $R$ defines a free action of $\bbZ$ on the support of $R$.

The following is probably well known.

 \begin{lem}\label{lem_conjugant}
   Let $R$ be an irrational rotation on a circle $\calc$.
   \begin{enumerate}
   \item If $R'$ commutes with $R$, then $R'$ is a rotation
   \item If $S\in \IET(\calc)$ is such that $SRS\m$ commutes with $R$,
then $S$ is itself a rotation, and in particular, $S$ commutes with $R$.
   \end{enumerate}
 \end{lem}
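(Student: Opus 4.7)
The plan is to prove (1) by a direct invariance argument and then bootstrap twice from (1) to prove (2).

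For (1), the key observation is that the finite set $D$ of discontinuity points of $R'$ must satisfy $R(D) = D$: since $R$ is continuous, the relation $R'R = RR'$ forces the discontinuity set of the right-hand side, which is $D$, to equal the discontinuity set of the left-hand side, which is $R\m(D)$. As $R$ is an irrational rotation of $\calc$, every $R$-orbit is infinite, so any finite $R$-invariant subset is empty; hence $D = \es$, and $R'$ is a continuous element of $\IET(\calc)$, which is necessarily a rotation.

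For (2), I would first apply (1) to $R'' = SRS\m$, which by hypothesis commutes with $R$. This gives that $R''$ is a rotation, say by some angle $\beta$, while $R$ rotates by some $\alpha$ with $\alpha/l$ irrational. The main step is then to show that $\beta - \alpha$ is rational modulo $l$. Writing $S$ as a translation by a constant $c_i \in \bbR/l\bbZ$ on each of its continuity intervals $I_i$, the conjugacy relation $S(x + \alpha) = S(x) + \beta$ forces $c_j \equiv c_i + (\beta - \alpha) \pmod{l}$ whenever $x$ and $x + \alpha$ lie in the interiors of continuity intervals $I_i$ and $I_j$ respectively. Iterating this relation along a generic $R$-orbit -- one avoiding the finitely many discontinuities of $S$ -- produces an arithmetic progression $c_{\sigma(n)} \equiv c_{\sigma(0)} + n(\beta - \alpha) \pmod{l}$ that can take only finitely many values, forcing $p(\beta - \alpha) \equiv 0 \pmod{l}$ for some integer $p \geq 1$. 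Hence $R''^p = R^p$, and $S$ commutes with $R^p$.

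Finally, $R^p$ is still an irrational rotation of $\calc$, so applying (1) a second time with $R^p$ in place of $R$ and $S$ in place of $R'$ yields that $S$ itself is a rotation. Since rotations of $\calc$ commute, $S$ commutes with $R$ as well, giving the ``in particular'' statement. The main obstacle in this plan is the rationality step just described; the rest is routine bookkeeping, with a mild care required to choose the base point of the iteration outside the countable exceptional set of points whose $R$-orbit meets a discontinuity of $S$.
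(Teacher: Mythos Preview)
Your argument for (1) is essentially the same as the paper's: the commutation relation forces $R$-invariance of the finite discontinuity set of $R'$, which must therefore be empty.

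Your argument for (2) is correct but considerably more elaborate than necessary. The paper dispatches (2) in one line by reusing the invariance trick from (1). Once you know $R'' = SRS\m$ is a rotation (by (1)), both $R$ and $R''$ are continuous; the identity $R''S = SR$ then gives $\Delta(S) = \Delta(R''S) = \Delta(SR) = R\m(\Delta(S))$, so $\Delta(S)$ is a finite $R$-invariant set, hence empty. There is no need to compare rotation angles, establish rationality of $\beta-\alpha$, or pass to a power $R^p$.

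Your detour through the rationality step does work: the arithmetic-progression argument along a generic orbit correctly shows some $p(\beta-\alpha)\equiv 0\pmod l$, and then (1) applied to $R^p$ finishes. But it obscures the fact that (2) is really the same argument as (1) with $S$ in the role of $R'$; the only extra input needed is continuity of $R''$, which you already have from (1). So the ``main obstacle'' you identify is not an obstacle at all --- it simply does not arise in the natural proof.
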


 \begin{proof}
For $R'\in \IET$, denote by $\Delta(R')$ its set of discontinuity points.
   Since $R,R\m$ are continuous, $\Delta(R R' R\m)=R(\Delta(R'))$. 
In particular, if $R'$ commutes with $R$, $\Delta(R')$ is $R$-invariant.
Being finite, $\Delta(R')$ has to be empty and $R'$ is a rotation.
This proves the first assertion.

If $R'=SRS\m $ commutes with $R$, then $R'$ is a rotation.
By continuity of $R$ and $R'$, the relation $R'S=SR$ yields
$\Delta(S)=R\m\Delta(S)$ which implies $\Delta(S)=\es$ as above.
 \end{proof}






\subsection{Irrational circles}

 \begin{dfn}
   Let $T\in\IET(\cald)$. An \emph{irrational circle} of $T$ is a subdomain $\calc\subset \cald$
(not necessarily homeomorphic to a circle, or even connected), which is $T$-invariant, and such that
$T_{|\calc}$ is conjugate to an irrational rotation on a circle.
 \end{dfn}

For example, given $0<\tau<l<1$, consider the transformation $h:[0,1)\ra[0,1)$
defined as the translation of length $\tau$ on $[0,l-\tau)$,
as the translation of length $\tau-l$ on $[l-\tau,l)$, and as the identity on $[l,1)$.
Then $[0,l)$ is an irrational circle of $h$ if and only if $\tau/l\notin\bbQ$.

Of course, $T$ is conjugate to 
an irrational multi-rotation if and only if its support is a disjoint union of irrational circles.

It follows from the definition that if $g'=hgh\m$ and if $\calc$ is an irrational circle of $g$,
then $h(\calc)$ is an irrational circle of $g'$.

 \begin{lem}\label{lem_minimal}
For any $T\in\IET(\cald)$, two irrational circles of $T$ either are disjoint or coincide.

More generally, if a subdomain $\cald_0\subset \cald$ intersects
some irrational circle $\calc$, and if $T(\cald_0)=\cald_0$, then $\cald_0\supset \calc$.
 \end{lem}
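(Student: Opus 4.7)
The plan is to prove the more general statement first (the first claim then follows by symmetry). So let $\cald_0 \subset \cald$ be a $T$-invariant subdomain meeting an irrational circle $\calc$ of $T$; I want to show $\calc \subset \cald_0$. Since $\calc$ is itself $T$-invariant, the intersection $\cald_0 \cap \calc$ is $T$-invariant, and it is easy to check that the intersection of two subdomains of $\cald$ is again a subdomain (each component of $\cald$ is either a circle or a semi-open interval, and within a single such component the intersection of two finite disjoint unions of semi-open intervals is again a finite disjoint union of semi-open intervals). So it suffices to show the following minimality statement: any $T$-invariant subdomain $\calc_0 \subset \calc$ is either empty or all of $\calc$.

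By definition there is an IET $h$ conjugating $T_{|\calc}$ to an irrational rotation $R$ on a circle $\bbR/l\bbZ$. Conjugation by $h$ sends subdomains to subdomains (IETs map subdomains to subdomains, and $h\m$ is also an IET), and preserves $T$-invariance. So I may replace $\calc_0$ by $h(\calc_0)$ and assume from the start that I have an $R$-invariant subdomain $A \subset \bbR/l\bbZ$, where $R$ is an irrational rotation. By the definition of a subdomain, $A$ is a finite disjoint union of semi-open intervals (or $A$ is the full circle, in which case we are done). In particular the topological boundary $\partial A$ is a \emph{finite} subset of $\bbR/l\bbZ$.

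Because $R$ is a homeomorphism of $\bbR/l\bbZ$, invariance of $A$ as a set implies that $\partial A$ is also $R$-invariant. But an irrational rotation of the circle admits no non-empty finite invariant subset (every orbit is infinite). Hence $\partial A = \emptyset$, so $A$ is clopen in $\bbR/l\bbZ$; since the circle is connected this forces $A = \emptyset$ or $A = \bbR/l\bbZ$, which is precisely the desired minimality.

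Finally, to deduce the first claim, suppose $\calc_1$ and $\calc_2$ are two irrational circles of $T$ which are not disjoint. Applying the general statement to $\cald_0 := \calc_2$ (a $T$-invariant subdomain that meets $\calc_1$) gives $\calc_1 \subset \calc_2$, and by symmetry $\calc_2 \subset \calc_1$, so $\calc_1 = \calc_2$. The only real content is the minimality of irrational rotations above; I do not expect any serious obstacle, the main thing to be careful about is confirming that subdomains and their intersections behave well topologically, and that the boundary-invariance argument really uses only continuity of $R$ (not of $T$ itself, which may be discontinuous on $\cald$).
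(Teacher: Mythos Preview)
Your proof is correct and proceeds by a somewhat different route from the paper. The paper argues directly in $\cald$: given $x\in\cald_0\cap\calc$, it notes that the intersection contains a small half-open interval $I=[x,x+\eps)$ (since both $\cald_0$ and $\calc$ are subdomains, every point of each has a right-neighbourhood in it), and then uses that finitely many $T$-iterates of $I$ already cover $\calc$ (because $T_{|\calc}$ is conjugate to an irrational rotation), giving $\calc\subset\cald_0$ at once. You instead first pass to the circle model via the conjugacy $h$ and then run a topological minimality argument: the boundary of an $R$-invariant subdomain of $\bbR/l\bbZ$ is a finite $R$-invariant set, hence empty, so the subdomain is clopen and by connectedness is $\emptyset$ or the whole circle. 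Both arguments exploit minimality of the irrational rotation, but the paper uses the form ``iterates of any interval cover the circle'' while you use ``no nonempty finite invariant set'' plus connectedness. Your approach is a bit more structural and makes the reduction to the standard circle explicit; the paper's is a couple of lines shorter since it never leaves $\cald$ and never needs to check that intersections of subdomains are subdomains.
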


 \begin{proof}
The first statement follows from the second.
Assume that there is some point $x\in \cald_0\cap\calc$.
Then $\cald_0\cap\calc$ contains a small interval $I=[x,x+\eps)$.
Since $\cald_0$ is $T$-invariant, $T^i(I)\subset \cald_0$ for all $i\in\bbZ$.
Since $T_{|\calc}$ is conjugate to an irrational rotation, there exists $i$ such that $\calc=I\cup T(I)\dots\cup T^i(I)$,
and the lemma follows.
 \end{proof}

\begin{lem}\label{lem_coincide}
  Let $S,T\in \IET(\cald)$ be conjugate to irrational multi-rotations.

If $S,T$ commute, then for each irrational circle $\calc$ of $S$, either $\calc$ does not intersect $\supp(T)$,
or $\calc$ is an irrational circle of $T$.
\end{lem}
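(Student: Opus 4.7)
The plan is to suppose $\calc\cap\supp(T)\neq\es$ and show that $\calc$ is an irrational circle of $T$. I would split this into three steps: (i) $\calc\subset\supp(T)$; (ii) $T(\calc)=\calc$; and (iii) $T_{|\calc}$ is an irrational rotation.

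Step (i) is easy: $\supp(T)$ is $S$-invariant (since $S$ commutes with $T$), so $\calc\cap\supp(T)$ is an $S$-invariant subdomain of $\calc$ that meets $\calc$, hence equals $\calc$ by Lemma \ref{lem_minimal}.

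Step (ii) is the heart of the argument. First, $T(\calc)$ is again an irrational circle of $S$: it is $S$-invariant by commutation, and $S_{|T(\calc)}=T\, S_{|\calc}\, T\m$ is conjugate to an irrational rotation. By Lemma \ref{lem_minimal}, either $T(\calc)=\calc$ or $T(\calc)\cap\calc=\es$. Suppose for contradiction the latter. Iterating, each $T^i(\calc)$ is an irrational circle of $S$, the family $\{T^i(\calc)\}_{i\geq 0}$ is pairwise disjoint or equal, and finiteness of $\cald$ yields a smallest $k\geq 2$ with $T^k(\calc)=\calc$. Let $\calU=\calc\sqcup T(\calc)\sqcup\cdots\sqcup T^{k-1}(\calc)$, a $T$-invariant subdomain. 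Then $T^k_{|\calc}$ is an IET of $\calc$ commuting with the irrational rotation $S_{|\calc}$, so by Lemma \ref{lem_conjugant}(1) it is a rotation. For every $m\geq 1$, the power $T^{km}$ is conjugate to an irrational multi-rotation, hence $\supp(T^{km})=\supp(T)\supset\calc$ by step (i), forcing $T^{km}_{|\calc}\neq \id$. So $T^k_{|\calc}$ is an irrational rotation, whose orbits are dense in $\calc$, and consequently $T_{|\calU}$ is minimal. On the other hand, picking an IET $h$ that conjugates $T$ to an irrational multi-rotation $R$, the image $h(\calU)$ is an $R$-invariant subdomain, which must decompose as a union of full irrational rotation circles of $R$ together with a piece of the fixed locus. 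Minimality of $R_{|h(\calU)}$ excludes any fixed locus and allows only a single rotation circle, so $h(\calU)$ is a single irrational rotation circle. Consequently $T^k_{|\calU}$ is conjugate to an irrational rotation on one circle, which has no proper non-empty invariant subdomain, contradicting $T^k(\calc)=\calc$ with $\calc\subsetneq\calU$. Therefore $k=1$.

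For step (iii), the same reasoning with $k=1$ applies: $T_{|\calc}$ commutes with the irrational rotation $S_{|\calc}$, hence is a rotation by Lemma \ref{lem_conjugant}(1); it is neither the identity nor of finite order, since $\calc\subset\supp(T^m)=\supp(T)$ for every $m\geq 1$; so $T_{|\calc}$ is an irrational rotation, and $\calc$ is an irrational circle of $T$. The main obstacle is the minimality contradiction in step (ii), which relies on the structural fact that an invariant subdomain of an irrational multi-rotation on which the restriction is minimal must be a single irrational rotation circle.
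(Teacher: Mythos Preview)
Your argument is correct, but it takes a longer path than the paper's. The paper does not attempt to prove $T(\calc)=\calc$ directly; instead it only needs some $k\geq 1$ with $T^k(\calc)=\calc$ (obtained exactly as you do, from the fact that $T$ permutes the finitely many irrational circles of $S$). It then picks an irrational circle $\calc_T$ of $T$ meeting $\calc$, notes that $\calc_T$ is also an irrational circle of $T^k$, and applies Lemma~\ref{lem_minimal} to the $T^k$-invariant subdomain $\calc$ to get $\calc_T\subset\calc$. Swapping the roles of $S$ and $T$ (the hypotheses are symmetric) gives $\calc\subset\calc_T$, so $\calc=\calc_T$ and we are done. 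This symmetry trick replaces your entire step~(ii) minimality contradiction and your step~(iii) appeal to Lemma~\ref{lem_conjugant}.

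What your approach buys is a self-contained verification that $T_{|\calc}$ is an irrational rotation, via the centralizer description in Lemma~\ref{lem_conjugant}; what it costs is the extra machinery of analysing $T_{|\calU}$ through a conjugacy to an actual multi-rotation and invoking minimality. Two small presentational points: when you invoke Lemma~\ref{lem_conjugant}(1) on $\calc$, you should say explicitly that you first conjugate $\calc$ to an honest circle (the lemma is stated for a genuine circle); and ``finiteness of $\cald$'' would read more cleanly as ``$S$ has only finitely many irrational circles''.
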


\begin{proof}
  Since $S$ and $T$ commute, $T(\calc)$ is an irrational circle of $S$. 
Since $S$ has only finitely many irrational circles, there exists $k\geq 1$ such that
$T^k(\calc)=\calc$. 
Assume that $\calc$ intersects $\supp(T)$, \ie that it intersects some irrational circle $\calc_T$ of $T$.
By Lemma \ref{lem_minimal} (applied to the transformation $T^k$ and to $\cald_0=\calc$), $\calc_T\subset \calc$.
By symmetry of the argument, $\calc\subset \calc_T$. The lemma follows.
\end{proof}

\begin{lem}\label{lem_csa}
   Let $S$ and $T$ be conjugate to irrational multi-rotations. 
         If for all $n$,  $S^{n}TS^{-n}$  commutes with $T$,   then $S$ commutes with $T$.
\end{lem}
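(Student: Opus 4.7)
The plan is to show that $S(\calc_l)=\calc_l$ for every irrational circle $\calc_l$ of $T$; once this is done, $(STS\m)|_{\calc_l}=(S|_{\calc_l})\,T|_{\calc_l}\,(S|_{\calc_l})\m$ is a well-defined element of $\IET(\calc_l)$ which commutes with the irrational rotation $T|_{\calc_l}$, so Lemma~\ref{lem_conjugant}(2) forces $S|_{\calc_l}$ to be a rotation and hence to commute with $T|_{\calc_l}$. Since $T$ is the identity off $X=\supp T=\bigsqcup_l\calc_l$, this will give $ST=TS$ on all of $\cald$.

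First I would translate the hypothesis. Write $T_n=S^nTS^{-n}$; its irrational circles are exactly the $S^n\calc_{l'}$. Since $T$ and $T_n$ commute, Lemma~\ref{lem_coincide} gives the dichotomy: for every $l$ and every $n\in\bbZ$, either $\calc_l\cap S^n X=\es$, or $\calc_l=S^n\calc_{l'}$ for some $l'$. Taking $l=l'$ shows that the subdomains $S^n\calc_l$ for $n\in\bbZ$ are pairwise disjoint or equal; being of constant positive measure in the bounded domain $\cald$, only finitely many are distinct, so there is a smallest $N\geq 1$ with $S^N\calc_l=\calc_l$, and $Y=\bigsqcup_{n=0}^{N-1}S^n\calc_l$ is an $S$-invariant subdomain.

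Next I would split into two cases. If $\calc_l\not\subset\supp S$, the set $\calc_l\cap(\cald\setminus\supp S)$ is a non-empty subset of $\calc_l$ fixed pointwise by $S$, so $S(\calc_l)$ meets $\calc_l\subset X$; the dichotomy then gives $S(\calc_l)=\calc_{l'}$ with $\calc_{l'}\cap\calc_l\neq\es$, forcing $l'=l$. If instead $\calc_l\subset\supp S$, I pick an irrational circle $\calc'$ of $S$ meeting $\calc_l$. Then $Y$ meets $\calc'$, so Lemma~\ref{lem_minimal}, applied with $S$ in the role of $T$ and $\calc'$ in the role of $\calc$, yields $\calc'\subset Y$. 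The subdomain $E=\calc'\cap\calc_l$ is non-empty and $S^N$-invariant (using $S^N\calc_l=\calc_l$ and $S(\calc')=\calc'$). Since $S|_{\calc'}$ is conjugate to an irrational rotation on a circle and $N\neq 0$, so is $S^N|_{\calc'}$, making $\calc'$ an irrational circle of $S^N$. A second application of Lemma~\ref{lem_minimal}, now to $S^N$ and $E$, forces $E\supset\calc'$, so $\calc'\subset\calc_l$. Thus every irrational circle of $S$ meeting $\calc_l$ lies inside $\calc_l$, giving $\calc_l=\bigsqcup\{\calc':\calc'\subset\calc_l\}$ as a disjoint union of $S$-invariant pieces, and finally $S(\calc_l)=\calc_l$.

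The main obstacle is the case $\calc_l\subset\supp S$: here the threat is that $S$ could induce a non-trivial permutation of the $\calc_{l'}$'s along some $S$-orbit of length $N\geq 2$, in which case $ST\neq TS$. The use of the hypothesis \emph{for all} $n$, which produces the disjoint-or-equal dichotomy and hence the finite $S$-orbit, combined with the minimality of irrational rotations on the $\calc'$ (packaged by two applications of Lemma~\ref{lem_minimal}, to $S$ and then to $S^N$), is exactly what rules out such a non-trivial permutation.
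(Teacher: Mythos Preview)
Your proof is correct and follows essentially the same approach as the paper: reduce to showing that each irrational circle $\calc_l$ of $T$ is $S$-invariant by proving that any irrational circle $\calc'$ of $S$ meeting $\calc_l$ satisfies $\calc'\subset\calc_l$, and then apply Lemma~\ref{lem_conjugant}. The only minor difference is that the paper finds the needed power $n$ with $S^n\calc_l=\calc_l$ directly by recurrence of $S|_{\calc'}$ on a small interval $I\subset\calc'\cap\calc_l$ (so that $S^n(I)\cap I\neq\es$, hence $S^n\calc_l\cap\calc_l\neq\es$), rather than via your disjoint-or-equal plus measure argument; this lets the paper dispense with your case split and use a single application of Lemma~\ref{lem_minimal} in place of your two.
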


\begin{proof}
We prove that for each irrational circle $\calc_T$ of $T$, $S$ preserves $\calc_T$ and $S_{|\calc_T}$ commutes with $T_{|\calc_T}$.
The lemma follows immediately.

We can assume that $\calc_T$ intersects the support of $S$, hence some irrational circle $\calc_S$ of $S$.
Let $I=[x,x+\eps)$ be a segment in $\calc_S\cap\calc_T$.
There is $n\geq 1$ such that $S^n(I)\cap I\neq\es$, so $S^n(\calc_T)$ intersects $\calc_T$.
Since $S^n(\calc_T)$ is an irrational circle of $S^{n}TS^{-n}$ which commutes with $T$, 
 $S^n(\calc_T)=\calc_T$ by Lemma \ref{lem_coincide}.

Applying Lemma \ref{lem_minimal} to the $S^n$-invariant subdomain $\calc_T$,
we get $\calc_T\supset \calc_S$.
Since this holds for all irrational circle $\calc_S$ which intersect $\calc_T$,
$\calc_T$ is $S$-invariant.
Since $T_{|\calc_T}$ is conjugate to an irrational rotation,
we can apply Lemma \ref{lem_conjugant} to the restrictions of $T$ and $S$ to $\calc_T$,
and  get that $S_{|\calc_T}$  commutes with $T_{|\calc_T}$. 
\end{proof}

\section{Points of discontinuity}
\label{sec_discont}
\subsection{Definitions}

\begin{dfn}
  Given $h:\cald\ra \cald'$ an interval exchange map, we denote by 
$\Delta(h)$ the finite set
of points in $\cald$ where $h$ is discontinuous,
and by $\Disc(h)=\#\Delta(h)$ its cardinal.
\end{dfn}

We define the \emph{interior} $\rond \cald$ of $\cald$ as the set of points $x$ having a neighbourhood isometric to an open interval $(-\eps,\eps)$.
We note that $h$ being continuous on the right, it is automatically continuous at points in $\cald\setminus \rond \cald$.
Conversely, if $h(x)\in \cald\setminus \rond \cald$ and $x\in\rond{\cald}$, then $h$ is not continuous at $x$.

\begin{lem}\label{lem_subadd}
  Consider $h\in \IET(\cald,\cald')$,  $h'\in \IET(\cald',\cald'')$. 

Then $\Disc(h'\circ h)\leq \Disc(h)+\Disc(h')$. 
\end{lem}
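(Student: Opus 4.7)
The plan is to establish the set-theoretic inclusion
\[
\Delta(h'\circ h)\;\subseteq\; \Delta(h)\,\cup\, h^{-1}(\Delta(h')).
\]
Since $h$ is a bijection, $|h^{-1}(\Delta(h'))|=\Disc(h')$, and the lemma follows at once by taking cardinalities of both sides and applying the union bound.

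To prove the inclusion, I would argue by contrapositive: fix $x\in\cald$ with $x\notin\Delta(h)$ and $h(x)\notin\Delta(h')$, and show that $h'\circ h$ is continuous at $x$. Since $h$ is right-continuous, piecewise an orientation-preserving translation, and $x\notin\Delta(h)$, there is a neighborhood $U$ of $x$ in $\cald$ (one-sided if $x\in\cald\setminus\rond\cald$, two-sided if $x\in\rond\cald$) on which $h$ restricts to a single translation, mapping $U$ isometrically onto a neighborhood of $h(x)$ in $\cald'$. Because $h(x)\notin\Delta(h')$, the same reasoning yields a neighborhood of $h(x)$ on which $h'$ is a single translation. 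Restricting to the intersection, $h'\circ h$ is itself a single translation on a neighborhood of $x$, and in particular continuous at $x$. This proves the contrapositive.

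Points of $\cald\setminus\rond\cald$ require no separate treatment, since the paragraph preceding the lemma records that $h$ (and likewise $h'$) is automatically continuous at such boundary points by right-continuity; so the hypothesis $x\notin\Delta(h)$ is vacuously true there, and the one-sided neighborhood argument above still applies.

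There is no serious obstacle in this argument; the only care needed is in distinguishing one-sided from two-sided neighborhoods at the boundary points of the semi-open components of $\cald$, which the conventions fixed in Section~\ref{sec_prelim} handle cleanly.
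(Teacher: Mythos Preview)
Your proof is correct and follows essentially the same approach as the paper: establish the inclusion $\Delta(h'\circ h)\subseteq \Delta(h)\cup h^{-1}(\Delta(h'))$ via the contrapositive (composition of continuous maps is continuous), then use bijectivity of $h$ to bound cardinalities. The paper's version is terser, dispensing with the neighborhood analysis in favor of the one-line observation that continuity is preserved under composition, but the logical skeleton is identical.
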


\begin{proof}
For any  $x\in \Delta(h'\circ h)$, then either $x\in \Delta(h)$ or $h(x)\in \Delta(h')$
as the composition of continuous maps is continuous.
Thus $\Delta(h'\circ h)\subset \Delta(h)\cup h\m \Delta(h')$.
Since $h$ is a bijection, the lemma follows.
\end{proof}

\begin{lem}
  Given $h\in \IET(\cald)$, we can define
$$||h||=\inf \{\frac{1}{n} \Disc(h^n)|n\in\bbN\}=\lim_{n\ra \infty}\frac{1}{n} \Disc(h^n).$$
It satisfies $||h^k||=k||h||$ for all $k\in \bbN$, and 
if $h,h'$ are conjugate, then $||h||=||h'||$.
\end{lem}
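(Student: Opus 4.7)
The plan is to recognize this as a textbook application of Fekete's subadditive lemma to the sequence $a_n := \Disc(h^n)$, together with elementary manipulations using Lemma \ref{lem_subadd}.

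First I would establish subadditivity: by Lemma \ref{lem_subadd} applied to $h^n$ and $h^m$, we have $a_{n+m}=\Disc(h^{n+m})\leq \Disc(h^n)+\Disc(h^m)=a_n+a_m$. The existence of the limit and its equality with $\inf_n a_n/n$ is then exactly Fekete's lemma, which I would invoke by name (its proof being standard: for fixed $m$, write $n=qm+r$ with $0\le r<m$, bound $a_n\leq q\,a_m+a_r$, and let $n\to\infty$ to get $\limsup a_n/n\leq a_m/m$, hence $\limsup\leq \inf\leq\liminf$).

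For the identity $||h^k||=k||h||$, I would use the limit characterization:
\[
||h^k||=\lim_{n\to\infty}\frac{\Disc((h^k)^n)}{n}=\lim_{n\to\infty}\frac{\Disc(h^{kn})}{n}=k\lim_{n\to\infty}\frac{\Disc(h^{kn})}{kn}=k\,||h||,
\]
where the last equality holds because $(\Disc(h^{kn})/(kn))_{n}$ is a subsequence of a convergent sequence and therefore has the same limit.

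For conjugation invariance, suppose $h'=ghg^{-1}$ with $g\in\IET(\cald,\cald')$. Then $(h')^n=gh^ng^{-1}$, and two applications of Lemma \ref{lem_subadd} give $\Disc((h')^n)\leq \Disc(g)+\Disc(h^n)+\Disc(g^{-1})=\Disc(h^n)+2\Disc(g)$ (using that $g$ and $g^{-1}$ have the same discontinuity count, though this is not even needed). Dividing by $n$ and letting $n\to\infty$ yields $||h'||\leq ||h||$, and the reverse inequality follows by symmetry since $h=g^{-1}h'g$. There is no real obstacle here: the whole statement reduces to Fekete's lemma plus the subadditivity supplied by Lemma \ref{lem_subadd}, and the only care needed is to verify that $a_n$ is indeed finite (each $h^n$ is an IET with finitely many discontinuities), which ensures $\inf a_n/n$ is a well-defined non-negative real number.
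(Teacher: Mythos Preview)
Your proof is correct and follows essentially the same route as the paper: subadditivity from Lemma~\ref{lem_subadd} plus Fekete's lemma for the existence of the limit equal to the infimum, the same subsequence computation for $||h^k||=k||h||$, and the same two-sided bound $\Disc((h')^n)\leq \Disc(g)+\Disc(h^n)+\Disc(g^{-1})$ for conjugation invariance. The only difference is that you spell out the Fekete argument, whereas the paper merely cites subadditivity; your parenthetical claim that $\Disc(g)=\Disc(g^{-1})$ is unnecessary (as you note) and best omitted.
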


\begin{proof}
The limit exists and coincides with the infimum by subadditivity (Lemma \ref{lem_subadd}).
For any $h\in \IET(\cald)$, $||h^k||= \lim_{n\ra \infty}\frac{1}{n} \Disc(h^{kn})=k\lim_{n\ra \infty}\frac{1}{kn} \Disc(h^{kn})=k||h||$. 
If $h'=ghg\m$, then $h'^n=gh^ng\m$ so $\Disc(h'^n)\leq \Disc(g)+\Disc(h^n)+\Disc(g\m)$.
Passing to the limit, we get $||h'||\leq ||h||$. Since the symmetric inequality holds, the lemma follows.
\end{proof}

\subsection{Minimal model for discontinuity points, and applications}

\begin{SauveCompteurs}{discont}
\begin{prop}\label{prop_disc}
  Let $h\in \IET(\cald)$. Then there exists another domain $\cald_m$ and $h_m\in IET(\cald_m)$
conjugate to $h$, such that $\Disc(h_m^n)=n\Disc(h_m)$ for all $n\in \bbN$.
\end{prop}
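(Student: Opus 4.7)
The plan is to induct on $\Disc(h)$. The base case $\Disc(h)=0$ is immediate: $h$ is continuous, so every iterate $h^n$ is continuous too, and we may take $\cald_m=\cald$, $h_m=h$. For $\Disc(h)\ge 1$ we face a dichotomy: either $\Disc(h^n)=n\Disc(h)$ for every $n\in\bbN$ and we are again done with $\cald_m=\cald$, $h_m=h$; or there exists $n\ge 2$ with $\Disc(h^n)<n\Disc(h)$, in which case the whole content of the proposition reduces to proving the following \emph{reduction lemma}: under this strict inequality, $h$ is conjugate (in $\IET(\cald,\cald')$ for some domain $\cald'$) to some $h'\in\IET(\cald')$ with $\Disc(h')\le\Disc(h)-1$. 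Granting the lemma, the induction hypothesis applied to $h'$ finishes the argument by composition of conjugacies.

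To prove the reduction lemma, I would first iterate Lemma~\ref{lem_subadd} to get $\Delta(h^n)\subseteq\bigcup_{k=0}^{n-1}h^{-k}(\Delta(h))$. Strict inequality of cardinalities then forces one (or both) of the following phenomena: (i) the sets $h^{-k}(\Delta(h))$ intersect for two distinct values of $k$, which means that some forward orbit of a point of $\Delta(h)$ revisits $\Delta(h)$; or (ii) some point $x$ of the union is not a discontinuity of $h^n$, i.e.\ the jumps of $h$ accumulated along the forward orbit of $x$ exactly cancel out (this is already visible on the rotation by $1/3$ on $[0,1)$, whose single jump at $2/3$ is compensated by the ``boundary jump'' $1\to 0$). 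In either situation one extracts a pair $(y_1,y_2)$ with $y_1\in\Delta(h)$ and $y_2$ either another point of $\Delta(h)$ or a free endpoint of some interval component of $\cald$, linked by a piece of $h$-orbit, so that the jumps at $y_1$ and $y_2$ fit together in a compatible way.

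The surgery proper would consist in cutting $\cald$ near $y_1$ and $y_2$ and re-gluing the resulting pieces so that the jump of $h$ at $y_1$ is absorbed into the new gluing: the left-side neighborhood of $y_1$ is transplanted next to the matching side of $y_2$, while the right-side neighborhood of $y_1$ stays in place. The output $\cald'$ is again a disjoint finite union of circles and semi-open intervals of the same total length as $\cald$; the natural piecewise-translation bijection $g:\cald\to\cald'$ is an IET; and by construction $h':=ghg\m$ is now continuous at the image of $y_1$, so $\Disc(h')\le\Disc(h)-1$.

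\textbf{The main obstacle} is making this cut-and-reglue rigorous in every combinatorial configuration arising in (i) and (ii), and verifying that no new discontinuity of $h'$ is introduced at points other than the images of $y_1$ and $y_2$; this amounts to a local computation of the translation constants of $h$ on either side of $y_1$ and $y_2$, where the cancellation identified above is exactly what makes the new gluing consistent. Once the induction terminates the resulting $h_m$ satisfies $\Disc(h_m^n)=n\Disc(h_m)$ for all $n$ as required, and moreover $\Disc(h_m)=\lim_n\Disc(h_m^n)/n=||h_m||=||h||$ by conjugacy invariance of $||\cdot||$, recovering the stronger form of the proposition highlighted in the surrounding discussion.
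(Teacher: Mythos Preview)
Your induction on $\Disc(h)$ is the right skeleton, and in fact the paper's argument can be read as exactly such an induction (each of its moves---removing singular points, removing ``boundary connections'', removing ``fake boundaries''---turns out to decrease $\Disc$ by one). The genuine gap is in your reduction lemma, specifically in case~(i).

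In case~(i) you have $y_1,y_2\in\Delta(h)$ with $y_2=h^m(y_1)$, and you assert that ``the jumps at $y_1$ and $y_2$ fit together in a compatible way'' so that a single cut-and-reglue absorbs the jump at $y_1$. This compatibility is not there: two discontinuities of $h$ lying on the same orbit carry completely unrelated jump data. Your proposed surgery (``transplant the left side of $y_1$ next to the matching side of $y_2$'') has no natural candidate for ``matching side'', and in general creates a new discontinuity elsewhere, so $\Disc$ need not drop. What is true---and what you are missing---is that $y_1\in\Delta(h)$ forces $h(y_1)\in\Delta(h^{-1})$ (or $h(y_1)\notin\rond\cald$): if $h$ were continuous at $h^{-1}(h(y_1))=y_1$ one would contradict $y_1\in\Delta(h)$. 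So case~(i) really produces a forward orbit segment from a point of $\Delta(h^{-1})$ to a point of $\Delta(h)$. The correct surgery is then to \emph{split} $\cald$ at every point $h(y_1),h^2(y_1),\dots,h^m(y_1)=y_2$ of this segment; this deletes $y_2$ from $\Delta$, and the only candidate for a new discontinuity is $h^{-1}(h(y_1))=y_1$, which was already in $\Delta(h)$. Case~(ii), by contrast, genuinely calls for a \emph{gluing} move (your rotation-by-$1/3$ example is the right picture), but this only works cleanly once all case-(i) configurations have been eliminated---otherwise the gluing can recreate what you just removed.

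The paper organises precisely this two-step process (split, then glue) via the suspension surface $\Sigma(h)$: your case~(i) is its ``boundary connection'', your case~(ii) its ``fake boundary'', and the suspension makes transparent both which surgery to perform and why no new discontinuities appear. Your outline would become a proof once you replace the single vague cut-and-reglue by these two distinct moves, performed in order.
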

\end{SauveCompteurs}

This conjugate $h_m$ has the minimal number of discontinuity points among all conjugates of $h$ as shows the third assertion
of the following corollary.

\begin{cor}\label{cor_growth}
Consider any $h\in \IET(\cald)$. Then
  \begin{enumerate}
  \item  $||h||=||h_m||=\Disc(h_m)$. In particular, $||h||\in \bbN$.

  \item There exists a constant $C$ such that for all $n\geq 0$, $n||h||-C\leq \Disc(h^n)\leq n||h||+C$

  \item If $h'$ is any other conjugate of $h$, then $\Disc(h_m^n)\leq \Disc(h'^n)$ for all $n$.

  \item  \label{item_vmr} $||h||=0$, if and only if $h$ is conjugate to a virtual multi-rotation

  \item \label{item_divis} If $h$ is divisible (i.e. if $h$ has roots of arbitrary order), then $||h||=0$.
  \end{enumerate}

\end{cor}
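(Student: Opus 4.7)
The plan is to deduce all five parts directly from Proposition~\ref{prop_disc} together with the two lemmas already established: the subadditivity lemma $\Disc(h'\circ h)\leq \Disc(h)+\Disc(h')$ and the fact that $\lVert\cdot\rVert$ is both a conjugacy invariant and the infimum $\inf_n \tfrac{1}{n}\Disc(h^n)$.

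For item (1), I would simply observe that $\lVert h\rVert=\lVert h_m\rVert$ by conjugation invariance, while Proposition~\ref{prop_disc} gives $\lVert h_m\rVert=\lim_n \tfrac{1}{n}\Disc(h_m^n)=\lim_n \tfrac{1}{n}\cdot n\Disc(h_m)=\Disc(h_m)$, a non-negative integer. For item (2), write $h_m=ghg^{-1}$ so that $h^n=g^{-1}h_m^n g$ and $h_m^n=gh^n g^{-1}$. Two applications of Lemma~\ref{lem_subadd} yield
\[
\Disc(h^n)\leq \Disc(h_m^n)+\Disc(g)+\Disc(g^{-1})=n\lVert h\rVert+C,
\]
\[
\Disc(h^n)\geq \Disc(h_m^n)-\Disc(g)-\Disc(g^{-1})=n\lVert h\rVert-C,
\]
with $C=\Disc(g)+\Disc(g^{-1})$. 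No cleverness is needed here.

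For item (3), I would exploit the infimum formula for $\lVert\cdot\rVert$: for any $n$ and any conjugate $h'$ of $h$ one has $\Disc(h'^n)\geq n\lVert h'\rVert=n\lVert h\rVert=n\Disc(h_m)=\Disc(h_m^n)$, the last equality being Proposition~\ref{prop_disc}. This shows that the minimal model really achieves the pointwise minimum of $\Disc(\cdot^n)$ over the conjugacy class, not just the asymptotic one.

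For item (4), the reverse direction is immediate: a virtual multi-rotation is continuous, its powers are continuous too, so $\Disc=0$ at every power and $\lVert\cdot\rVert=0$; then conjugation invariance transports this to $h$. For the forward direction, $\lVert h\rVert=0$ forces $\Disc(h_m)=\lVert h\rVert=0$ by item (1), so $h_m$ is a continuous element of $\IET(\cald_m)$, i.e.\ a virtual multi-rotation, and $h$ is conjugate to it by construction. Finally for item (5), if $h$ admits $n$-th roots $k_n$ for arbitrarily large $n$, then $\lVert h\rVert=\lVert k_n^n\rVert=n\lVert k_n\rVert$, where $\lVert k_n\rVert\in\bbN$ by item (1); taking $n>\lVert h\rVert$ forces $\lVert k_n\rVert=0$ and hence $\lVert h\rVert=0$. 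I do not foresee any real obstacle: the proposition does all the work, and the corollary is essentially an unpacking exercise.
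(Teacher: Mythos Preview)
Your proof is correct and follows essentially the same route as the paper's: each item is deduced from Proposition~\ref{prop_disc}, conjugation invariance of $\lVert\cdot\rVert$, subadditivity, and the infimum formula, in the same way. Your treatment of item~(3) is phrased directly rather than by contradiction, but the underlying inequality $\Disc(h'^n)\geq n\lVert h'\rVert$ is exactly what the paper uses.
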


\begin{proof}
The first assertion is clear.

Let $g\in\IET(\cald,\cald_m)$ conjugating $h$ to $h_m$. Then 
$\Disc(h^n)\leq \Disc(g)+\Disc(h_m^n)+\Disc(g\m)= \Disc(g)+n||h||+\Disc(g\m)$.
Similarly, $n||h||=\Disc(h_m^n)\leq \Disc(g)+\Disc(h^n)+\Disc(g\m)$. Assertion 2 follows.

For assertion 3, assume $\Disc(h'^{n_0})<\Disc(h_m^{n_0})=n_0||h||$.
Then $||h'||=\inf\{\frac1n \Disc(h'^{n})\}< ||h||$, a contradiction.

If $||h||=0$, then $\Disc(h_m)=0$, so $h_m$ is continuous, i.e.,
a virtual multi-rotation. Hence $h$ is conjugate to a virtual multi-rotation. The converse is obvious.

Finally, assume that $||h||>0$. Since $||g^k||=k||g||$, and since $||\cdot||$ takes only integer values,
$h$ has no root of order larger that $||h||$.
\end{proof}

The following lemma shows that the orbit of $\Delta(h_m)$ is canonical.

\begin{lem}
  Let $h_m$ be such that $d(h_m)=||h_m||$.
Then the centralizer of $h_m$ 
preserves $h_m^\bbZ(\Delta(h_m))$. Equivalently,
if $[g,h_m]=1$, then for any $x\in\Delta(h_m)$ there exists $k\in\bbZ$ such that
$g(x)\in h_m^k(\Delta(h_m))$.
\end{lem}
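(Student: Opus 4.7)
The plan is to use the minimality hypothesis $\Disc(h_m)=\|h_m\|$ to produce, for each $n$, a large, spread-out ``canonical'' discontinuity set for $h_m^n$, and then exploit the commutation $g h_m^n g\m = h_m^n$ together with Lemma~\ref{lem_subadd} to show that this set is almost invariant under $g$, up to a bounded error controlled by $\Disc(g)$.

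First, I would note that $\Disc(h_m)=\|h_m\|$, combined with $\|h_m^n\|=n\|h_m\|$ and the subadditivity $\Disc(h_m^n)\leq n\Disc(h_m)$, forces $\Disc(h_m^n)=n\Disc(h_m)$ for all $n\geq 1$. Iterating the inclusion $\Delta(h'h)\subset \Delta(h)\cup h\m\Delta(h')$ from (the proof of) Lemma~\ref{lem_subadd} gives $\Delta(h_m^n)\subset \bigcup_{i=0}^{n-1} h_m^{-i}(\Delta(h_m))$. Since the cardinality of the right-hand side is at most $n\Disc(h_m)$ and equals that of the left, the inclusion is an equality and the union is in fact disjoint:
\[
\Delta(h_m^n)=\bigsqcup_{i=0}^{n-1} h_m^{-i}(\Delta(h_m)).
\]

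Next, from $h_m^n = g h_m^n g\m$ and two applications of Lemma~\ref{lem_subadd} (first to the pair $(g,\, h_m^n g\m)$, then to $(h_m^n,g\m)$) I would derive
\[
\Delta(h_m^n)\subset \Delta(g\m)\,\cup\, g(\Delta(h_m^n))\,\cup\, g h_m^{-n}(\Delta(g)).
\]
Because $g$ is a bijection, $|g(\Delta(h_m^n))|=|\Delta(h_m^n)|=n\Disc(h_m)$, so the symmetric difference has bounded size: $|g(\Delta(h_m^n))\setminus \Delta(h_m^n)|\leq 2\Disc(g)$, independently of $n$.

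Finally, fix $x\in \Delta(h_m)$. For each $0\leq i\leq n-1$ the point $h_m^{-i}(g(x))=g(h_m^{-i}(x))$ lies in $g(\Delta(h_m^n))$, and these $n$ points are pairwise distinct because $g$ is a bijection and the points $h_m^{-i}(x)$ are distinct by the disjointness established above. Choosing $n>2\Disc(g)$, at least one such point must lie in $\Delta(h_m^n)$, i.e.\ $h_m^{-i}(g(x))=h_m^{-l}(x')$ for some $0\leq l\leq n-1$ and some $x'\in \Delta(h_m)$, whence $g(x)=h_m^{\,i-l}(x')\in h_m^\bbZ(\Delta(h_m))$. The equivalence of the two formulations is routine: applying the conclusion to both $g$ and $g\m$ shows that $g$ preserves $h_m^\bbZ(\Delta(h_m))$ setwise, since $g$ commutes with $h_m$ and hence $gh_m^k(\Delta(h_m))=h_m^k g(\Delta(h_m))\subset h_m^\bbZ(\Delta(h_m))$.

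The only mildly delicate point is checking that the ``noise'' introduced by the discontinuities of $g$ in the second step is bounded by $2\Disc(g)$ and does \emph{not} grow with $n$; once this is visible, the counting argument in the third step is immediate.
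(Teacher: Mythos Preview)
Your proof is correct and follows essentially the same route as the paper's: establish the disjoint decomposition $\Delta(h_m^n)=\bigsqcup_{i=0}^{n-1} h_m^{-i}(\Delta(h_m))$ from the minimality hypothesis, use the conjugation relation and subadditivity to bound $|\,g(\Delta(h_m^n))\setminus \Delta(h_m^n)\,|$ by $2\Disc(g)$, and then pigeonhole on the $n$ distinct points $g(h_m^{-i}(x))$ for $n>2\Disc(g)$. The only cosmetic difference is that the paper conjugates by $g^{-1}$ rather than $g$, and it omits your final paragraph on the equivalence of the two formulations.
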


\begin{proof}
Fix $x\in \Delta(h_m)$.
Since $||h_m||=d(h_m)$,
for all $i>0$, $d(h_m^i)=id(h_m)$, so $\Delta(h_m^i)$ 
is the disjoint union $\Delta(h_m)\dunion\dots \dunion h_m^{-(i-1)}.\Delta(h_m)$.
In particular, $x,\dots, h_m^{-(i-1)}(x)\in\Delta(h_m^i)$.

  Since $g$ commutes with $h_m^i$ for all $i>0$, 
$$\Delta(h_m^i)=\Delta(g\m h_m^ig)\subset \Delta(g)\cup g\m.\Delta(h_m^i)\cup g\m h_m^{-i}.\Delta(g\m).$$
In particular, $\Delta(h_m^i)\setminus g\m.\Delta(h_m^i)$ contains at most $2d(g)$ elements.
It follows that for $i>2d(g)$, there is some $i_0\in\{0,\dots, i-1\}$ such that $h_m^{-i_0}(x)\in g\m.\Delta(h_m^i)$.
Hence, $g h_m^{-i_0}(x)$ lies in some $h_m^{-j}.\Delta(h_m)$ for some $j\in\{0,\dots,i-1\}$,
and the lemma follows since $g$ and $h_m$ commute.
\end{proof}

These considerations allow to slightly simplify Novak's result on centralizers.

\begin{cor}[\cite{Novak_discontinuity}]\label{cor_centralizer}
  It $h\in\IET$ acts with dense orbits on $\cald$, and $||h||\neq 0$, then the centralizer of $h$ is virtually cyclic.
\end{cor}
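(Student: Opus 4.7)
The plan is to leverage the preceding lemma, which says that the centralizer of $h_m$ preserves the countable set $\Omega := h_m^\bbZ(\Delta(h_m))$, and to combine it with the dense-orbits hypothesis to conclude that the centralizer is virtually cyclic.

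First I would replace $h$ by its minimal-discontinuity conjugate $h_m$ given by Proposition \ref{prop_disc}. Conjugation preserves the isomorphism type of the centralizer, and the conjugating IET sends dense sets to dense sets (the preimage of a non-empty open set under an IET contains a non-empty open interval, which must meet any dense set), so the dense-orbits hypothesis is preserved. Thus we may assume $\Disc(h) = ||h|| \geq 1$, and in particular $\Delta(h) \neq \es$.

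Now pick $x \in \Delta(h)$ and let $Z$ denote the centralizer of $h$. By the preceding lemma, $g(x) \in \Omega$ for every $g \in Z$. Together with $gh = hg$, this gives that each $g \in Z$ permutes $\Omega$ and preserves its partition into $h$-orbits. Since $\Delta(h)$ is finite there are only finitely many such orbits, so the resulting action of $Z$ on this orbit set defines a homomorphism to a finite symmetric group. Let $Z'$ be its kernel; then $[Z : Z']$ is finite.

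To finish, I would show $Z'$ is cyclic. The dense-orbits hypothesis forces every $h$-orbit to be infinite, so for each $g \in Z'$ there is a unique integer $k(g)$ with $g(x) = h^{k(g)}(x)$; a short computation using commutation shows that $k \colon Z' \to \bbZ$ is a homomorphism. For injectivity, if $k(g) = 0$ then $g$ fixes every point of the dense orbit $\{h^n(x) : n \in \bbZ\}$, and approaching an arbitrary point from the right through this orbit, together with the right-continuity of $g$, forces $g = \id$. Hence $Z'$ embeds in $\bbZ$, so $Z'$ is cyclic and $Z$ is virtually cyclic. The main delicate step is this final identity-from-dense-fixed-set argument, since it must be carried out at points which may themselves be discontinuities of $g$ and therefore requires the right-continuity convention; the rest of the argument is bookkeeping around the preceding lemma.
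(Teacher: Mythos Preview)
Your proof is correct and follows essentially the same route as the paper: pass to the minimal model, use the preceding lemma to get a finite-index subgroup $Z'$ fixing each $h$-orbit in $\Delta(h)$ setwise, and then show $Z'\hookrightarrow\bbZ$ via the shift $k(g)$ on a fixed orbit. The paper is terser at the end (it simply asserts $g=h^k$ from agreement on a dense orbit, and in fact identifies $Z'=\grp{h}$), whereas you spell out the right-continuity argument; this extra care is warranted and the argument goes through as you describe.
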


\begin{proof}
By Proposition \ref{prop_disc}, we may assume that $||h||=d(h)$. Denote by $Z$ the centralizer of $h$.
By the previous lemma, $Z$ permutes the $h$-orbits of the points of $\Delta(h)$. 
Let $Z_0\subset Z$ be the finite index subgroup
that preserves the $h$-orbit of each point of $\Delta(h)$. 
We claim that $Z_0=\grp{h}$. Indeed, consider $g\in Z_0$, $x\in \Delta(h)$,
and $k$ such that $g(x)=h^k x$. Since $g$ commutes with $h$, for all $i\in\bbZ$, $g(h^i(x))=h^{k}(h^ix)$.
Since the orbit of $x$ is dense, $g=h^k$.
\end{proof}

The proof of Proposition \ref{prop_disc} is better visualized in terms of suspensions. 
It is a direct consequence of Lemmas \ref{lem_hS_exists} and \ref{lem_hS_is_min}, whose proofs occupy next subsection.

\subsection{Suspensions and construction of minimal models}

Consider $h\in \IET(\cald)$. We now define its suspension (in a
slightly non-standard way).
Let $I_1,\dots,I_k$ be the continuity intervals of $h$, \ie the maximal connected sets on which $h$ is continuous.
Each $I_i$ is either a circle, or a semi-open interval, closed on the left.
Let $\ol \cald$ be the metric completion of $\cald$, and $\ol I_i$ the closure of $I_i$ in $\ol \cald$.
Note that $h_{|I_i}$ extends by continuity to a map $\ol h_i:\ol I_i\ra\ol \cald$.
We denote by $J_i=h(I_i)$, which are the maximal connected sets on which $h\m$ is continuous
and by $\ol J_i=\ol h_i(\ol I_i)$ its closure in $\ol \cald$.

We consider \emph{bands} $B_i=\ol I_i\times [0,1]$, which we glue on $\ol \cald$ using 
two maps $\phi_\eps:\ol I_i\times \{\eps\}\ra \ol \cald$ defined as follows:
$\phi_0:(x,0)\in \ol I_i\times\{0\}\mapsto x\in \ol I_i$ is just the inclusion,
and $\phi_1(x,1)\in \ol I_i\times\{1\}\mapsto  \ol h_i(x)\in\ol J_i$ is the extension of $h_{|I_i}$.
Since $\ol I_i$ can be a circle, bands can be annuli.

We denote by $\Sigma(h)$ (or simply $\Sigma$) the cellular complex obtained in this way.
We call it the \emph{suspension} of $h$.
If we foliate bands by $\{x\}\times [0,1]$, $\Sigma$ inherits a natural foliation such that
for all $x\in \cald$, $x$ lies in the same leaf as $h(x)$.

Since $I_1 \dunion \dots\dunion I_k$ is a partition of $\cald$,
every $x\in \cald$ lies in exactly one interval $\ol I_i$ except if $h$ is discontinuous at $x$, in which case $x$ lies in the intersection
of exactly two closed intervals $\ol I_i$.
Similarly, any $x\in \cald$ lies in exactly one $\ol J_i$ except if $h\m$ is discontinuous at $x$.

\begin{figure}[htbp]
  \centering
  \includegraphics{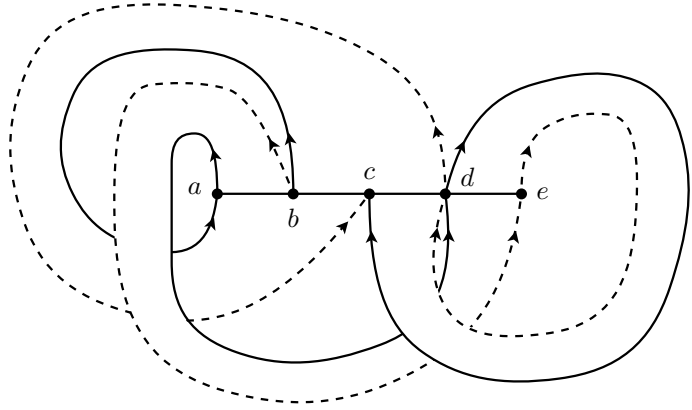}
  \caption{Suspension of an interval exchange transformation}
  \label{fig_susp}
\end{figure}

\newcommand{\Sing}{\mathrm{Sing}}

Denote by $\Sing(h)=\Delta(h)\cap \Delta(h\m)$ the set of points where both $h$ and $h\m$ are discontinuous
(see Figure \ref{fig_susp} where $\Delta(h)=\{b,d\}$, $\Delta(h\m)=\{c,d\}$ and $\Sing(h)=\{d\}$).
On Fig.\ \ref{fig_susp}, dotted lines correspond to leaf segments $x\times [0,1]$ 
whose endpoints are not the image of each other under $h$ (only under $\ol h_i$).

Points in $\cald\setminus (\Delta(h)\cup \Delta(h\m))$ (resp.\ $\Delta(h)\cup \Delta(h\m)\setminus \Sing(h)$)
 have an neighbourhood in $\Sigma$ homeomorphic to $\bbR^2$ (resp.\ to a $\bbR_+\times\bbR$). 
In particular, $\Delta(h)$ is a surface with boundary if (and only if) $\Sing(h)=\es$.
In this case, $\Delta(h)\cup\Delta(h\m)\subset \partial\Sigma$.

It will be useful to perform some moves that change $h$ to a conjugate, and that allow to simplify $\Sigma$.

Consider $h\in \IET(\cald)$, and $x\in \rond{\cald}$. 
Let $\cald'$ be the domain obtained from $\cald$ by cutting at $x$.
More precisely, if the component of $\cald$ containing $x$ is $[\alpha,\beta)$ with $\alpha<x<\beta$,
we replace $[\alpha,\beta)$ by the two intervals $[\alpha,x)$ and $[x,\beta)$.
Similarly, if the component of $\cald$ containing $x$ is a circle $\bbR/l\bbZ$,
we replace it by an interval $[x,x+l)$.
We denote by $i:\cald'\ra \cald$ the natural inclusion ($i\in\IET(\cald',\cald)$), and
define $h'=i'{}\m\circ h\circ i\in\IET(\cald')$. We say that $h'$ is obtained from $h$ by splitting $\cald$ at $x$.

\begin{lem}\label{lem;no_sing}
  Every $h\in \IET(\cald)$ is conjugate to some $h'\in \IET(\cald')$ such that $\Sigma(h')$ is a surface with boundary.
\end{lem}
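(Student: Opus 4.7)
The plan is to eliminate $\Sing(h)$ by splitting $\cald$ at each of its points. Since $\Sing(h)\subset\Delta(h)$ is finite, enumerate $\Sing(h)=\{x_1,\dots,x_N\}$ and split $\cald$ successively at $x_1,\dots,x_N$. This produces a domain $\cald'$, a natural inclusion $i\in\IET(\cald',\cald)$, and a conjugate $h'=i\m\circ h\circ i\in\IET(\cald')$. It remains to check that $\Sing(h')=\es$, which by the discussion preceding the lemma is exactly what is needed for $\Sigma(h')$ to be a surface with boundary.

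After all the cuts, each $x\in\Sing(h)$ is the left endpoint of a component of $\cald'$, so $x\in\cald'\setminus\rond{\cald'}$; by the right-continuity convention, $h'$ and $h'\m$ are both continuous at $x$, hence $x\notin\Delta(h')\cup\Delta(h'\m)$. The delicate point is to rule out \emph{new} discontinuities at some $z\notin\Sing(h)$. Since $i$ is the identity on underlying sets, $h'$ and $h$ agree as functions, and for any non-split point $z\in\rond{\cald'}$ the local topology of $\cald'$ at $z$ coincides with that of $\cald$; so the only way $z$ could become a new discontinuity of $h'$ is to have $h'(z)=h(z)\in\cald'\setminus\rond{\cald'}$, i.e.\ $h(z)\in\Sing(h)$. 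But if $x:=h(z)\in\Sing(h)\subset\Delta(h\m)$, then $x$ is the left endpoint of some maximal continuity interval $J_l$ of $h\m$, and since $h$ acts as an orientation-preserving translation on the corresponding interval $I_l=h\m(J_l)$, its preimage $z=h\m(x)$ is the left endpoint of $I_l$. Maximality of $I_l$ then forces $z\in\Delta(h)\cup(\cald\setminus\rond\cald)$, which prevents $z$ from being a new interior discontinuity of $h'$. The symmetric argument applies to $h'\m$.

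Combining the two observations gives $\Delta(h')\subset\Delta(h)\setminus\Sing(h)$ and $\Delta(h'\m)\subset\Delta(h\m)\setminus\Sing(h)$; these two sets are disjoint, so $\Sing(h')=\es$ and $\Sigma(h')$ is a surface with boundary. The main subtlety is the check in the middle paragraph: it is the piecewise-translation structure of $h$ that prevents a cut on the domain side from producing a new singularity on the image side, after which the conclusion is immediate.
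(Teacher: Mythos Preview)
Your proof is correct and follows the same approach as the paper: split the domain at the points of $\Sing(h)$ so that each former singular point becomes a left endpoint of $\cald'$. The paper performs the splits one at a time and argues inductively that $\#\Sing$ drops by one at each step, whereas you cut at all of $\Sing(h)$ simultaneously and give a more explicit justification---using the piecewise-translation structure to see that any $z$ with $h(z)\in\Sing(h)\subset\Delta(h\m)$ must already be a left endpoint of a continuity interval of $h$---for why no new singularities are created; this is exactly the point the paper's one-line justification (``$i$ is continuous and $i\m$ is only discontinuous at $x$'') passes over rather quickly.
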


\begin{proof}
Assume that $\Sigma(h)$ is not a surface and
consider a point $x\in \Sing(h)$.
Let $h'\in \IET(\cald')$ be obtained by splitting $\cald$ at $x$.
We claim that $\#\Sing(h')=\#\Sing(h)-1$. The Lemma will follow by induction.

Denote by $i:\cald'\ra \cald$ the splitting map, and $x'=i\m(\{x\})$
 the copy of $x$ in $\cald'$.
Since $i$ is continuous on $\cald'$ and $i\m$ is only discontinuous at $x$,
$\Sing(h)\setminus x=i(\Sing(h')\setminus\{x'\})$.
Since $x'\in  \cald'\setminus\rond \cald'$, $h'$ and $h'{}\m$ are continuous at $x'$.
The claim follows.
\end{proof}

When $\Sigma$ is a surface, one can describe its boundary $\partial\Sigma$.
Each connected component of $\partial \Sigma$ is a circle $C$, which has a natural set of vertices $C\cap \cald$, 
and a natural set of edges contained in boundaries of bands $(\ol I_i\setminus \rond{I_i})\times [0,1]$. 
Each edge $\{x\}\times [0,1]$ of $C$ carries a preferred orientation from $0$ to $1$.
A vertex $x\in C$ can be of three exclusive types: 
$x\in \ol \cald\setminus \rond{\cald}$ having one incoming and one outgoing edge 
(the vertices $a,e$ in Fig.\ \ref{fig_susp}),
$x\in \Delta(h)$ having two outgoing edges (the vertex $b$),
and $x\in \Delta(h\m)$ having two incoming edges
(the vertex $c$. Recall that $\Sing(h)=\Delta(h)\cap\Delta(h\m)=\es$ by assumption).

We say that $\Sigma$ has a \emph{boundary connection} 
if there exists a leaf segment in $\Sigma$  which intersects  $\partial\Sigma$
exactly at its endpoints, or if $\Sing(h)\neq \es$. Equivalently, $\Sigma(h)$ has a boundary connection if
 there exists $x\in \Delta(h\m)$ and $k\geq 0$ such that $h^k(x)\in \Delta(h)$. 
Note that taking $k$ as small as possible, this is equivalent to ask that
$h\m$ discontinuous at $x$ but continuous at $h(x),h^2(x),\dots, h^k(x)$ and
$h$ discontinuous at $h^k(x)$ but continuous at $x,h(x),\dots, h^{k-1}(x)$.

\begin{lem}\label{lem;no_BC}
  Every $h\in \IET(\cald)$ is conjugate to some $h'\in \IET(\cald')$ such that $\Sigma(h')$ has no boundary connection. 
\end{lem}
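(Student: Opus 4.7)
By Lemma~\ref{lem;no_sing} I may assume $\Sing(h)=\es$, so that $\Sigma(h)$ is a surface with boundary. The plan is to iteratively split $\cald$ so as to shorten, and eventually eliminate, every boundary connection (in the sense of the equivalent condition stated just above the lemma), re-applying Lemma~\ref{lem;no_sing} after each split whenever the operation reintroduces singular points.

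Suppose $\Sigma(h)$ admits a boundary connection of shortest length $k\ge 1$: there exist $x\in\Delta(h\m)$ and $y=h^k(x)\in\Delta(h)$ such that $h^i(x)\notin\Delta(h)\cup\Delta(h\m)$ for $1\le i\le k-1$. The key move is to split $\cald$ at $y$, producing a conjugate $h_1\in\IET(\cald_1)$. Since $y$ becomes the left-endpoint of an interval of $\cald_1$, it lies in neither $\Delta(h_1)$ nor $\Delta(h_1\m)$. A direct local analysis at the split shows that a new discontinuity of $h_1$ appears at $h\m(y)=h^{k-1}(x)$, because the left-limit of $h$ near that preimage, which previously approached $y$ from the left in $\cald$, now falls outside the piece of $\cald_1$ that contains $y$; similarly a new discontinuity of $h_1\m$ may appear at $h(y)$. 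One therefore obtains the combinatorial identities
\[
\Delta(h_1)=\bigl(\Delta(h)\setminus\{y\}\bigr)\cup\{h^{k-1}(x)\},\qquad
\Delta(h_1\m)\subseteq\Delta(h\m)\cup\{h(y)\}.
\]
Consequently, the leaf in $\Sigma(h_1)$ starting at $x$ now terminates at the new discontinuity $h^{k-1}(x)\in\Delta(h_1)$ after only $k-1$ steps; our chosen boundary connection has been shortened by one.

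This split may introduce singular points: if $k=1$ then $h^{k-1}(x)=x$ sits in both $\Delta(h_1)$ and $\Delta(h_1\m)$, and more generally $h(y)$ may coincide with an element of $\Delta(h_1)$. Whenever $\Sing(h_1)\neq\es$, I re-apply Lemma~\ref{lem;no_sing} to restore $\Sing=\es$. Each supplementary split acts only at a specific singular point and, by the proof of Lemma~\ref{lem;no_sing}, merely shifts discontinuities along orbits without creating new boundary connections shorter than the current minimum.

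Termination is argued via the lexicographic pair $(k_{\min}(h),N_{\min}(h))$, where $k_{\min}$ is the shortest length of a boundary connection of $\Sigma(h)$ and $N_{\min}$ is the number of boundary connections attaining it. Each full iteration (split at $y$, followed if necessary by applications of Lemma~\ref{lem;no_sing}) either shortens the chosen boundary connection, strictly decreasing $k_{\min}$, or removes it, strictly decreasing $N_{\min}$. The main obstacle is to verify that the newly created discontinuities at $h^{k-1}(x)$ and $h(y)$ do not spawn boundary connections of length strictly less than $k-1$: here the key observation is that the $h$-orbit of $h(y)$ is an $h$-shift of the forward orbit of $y$, so it can only re-enter $\Delta(h_1)\setminus\{h^{k-1}(x)\}$ after at least $k$ further iterations, since any shortcut would contradict the minimality of $k$ in the original $\Sigma(h)$. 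As $(k_{\min},N_{\min})\in\bbN^2$, the procedure halts after finitely many steps, producing a conjugate $h'\in\IET(\cald')$ for which $\Sigma(h')$ has no boundary connection.
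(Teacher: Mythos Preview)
Your termination argument has a genuine gap. The pair $(k_{\min}, N_{\min})$ does not decrease monotonically in lexicographic order: once you have shortened the chosen connection down to length $1$ and removed it via Lemma~\ref{lem;no_sing}, if it was the \emph{unique} connection of minimal length then $k_{\min}$ jumps \emph{up} to the next-shortest length and the pair increases. For instance, starting from lengths $\{5,7\}$ your procedure walks through $(5,1),(4,1),\dots,(1,1)$ and then to $(7,1)$. So well-foundedness of $\bbN^2$ under lex order does not give termination. A valid invariant would be the \emph{sum} of lengths (plus the number of connections), but justifying that this is non-increasing requires precisely the bookkeeping you gloss over. There is also an inaccuracy in your local analysis: since $y\in\Delta(h)$ is the left-endpoint of some continuity interval $I_i$, the point $h(y)$ is the left-endpoint of $h(I_i)$ and is therefore \emph{already} in $\Delta(h^{-1})\cup(\ol\cald\setminus\rond\cald)$; it is never a genuinely new discontinuity of $h_1^{-1}$, so your ``key observation'' about its forward orbit is moot.

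The paper sidesteps all of this by splitting at \emph{all} the points $x=x_0,x_1,\dots,x_k=y$ of the boundary connection in a single move (rather than one at a time). Each $x_i$ then becomes a non-interior point of the new domain, hence lies in neither $\Delta(h')$ nor $\Delta(h'^{-1})$; and the only candidates for new discontinuities, namely $h^{-1}(x_0)$ and $h(x_k)$, are already in $\Delta(h)\cup(\ol\cald\setminus\rond\cald)$ and $\Delta(h^{-1})\cup(\ol\cald\setminus\rond\cald)$ respectively by the observation above. One then has $\Delta(h')=\Delta(h)\setminus\{y\}$ and $\Delta(h'^{-1})=\Delta(h^{-1})\setminus\{x\}$, so $\Sing(h')=\es$ and exactly one boundary connection has been eliminated, with no new ones created. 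Induction on the number of boundary connections finishes at once. Your step-by-step splitting, done $k+1$ times on a single connection, is in fact equivalent to this, which is why a corrected invariant would make your approach work; but as written the termination is not established.
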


\begin{proof}
We can assume that $\Sing(h)=\es$ by Lemma \ref{lem;no_sing}. 
  Assume that $x,h(x),h^2(x),\dots, h^k(x)$ is a boundary connection, with $x\in \Delta(h\m)$,
$h^k(x)\in \Delta(h)$, and $h(x),\dots h^{k-1}(x)\in \rond \cald\setminus (\Delta(h)\cup \Delta(h\m))$.
We split $\cald$ at the points $x,h(x),\dots, h^{k}(x)$, and denote by $\cald'$ the new domain,
and by $i:\cald'\ra \cald$ induced by inclusion.
We denote by $x_i=h^i(x)\in \cald$, and $x'_i=i\m(h^i(x))\in \cald'$ for $i=0,\dots, k$.
Since $x'_i\in  \cald'\setminus \rond \cald'$, $\Delta(h')$ does contains no $x'_i$. 
Moreover, since $i$ and $i\m$ are continuous away from $x_i,x'_i$, 
$i\m(\Delta(h)\setminus \{x_0,\dots,x_k\})=\Delta(h')\setminus  \{x'_0,\dots,x'_k\}=\Delta(h')$,
and similarly for $\Delta(h)\m$. We still have $\Sing(h')=\es$ and $\Sigma(h')$ has one less boundary connection.
By induction, the lemma follows.
 \end{proof}

Now there are some useless boundary components which we want to get rid of.
They correspond to points $x$ such that $h$ is discontinuous at $x$, but some positive power 
is continuous at $x$.
We will denote by $h(x^-)=\lim_{\eps\ra 0^+} h(x-\eps)$ (only defined if $x\in \rond \cald$), and by $h(x^+)=\lim_{\eps\ra 0^+} h(x+\eps)=h(x)$.

Let $C\subset \partial\Sigma$ be a boundary component.
We say that $C$ is a \emph{fake boundary} if $C\cap \rond \cald$ consists of exactly two vertices $x,y$, 
(necessarily one in $\Delta(h)$, the other in $\Delta(h\m)$), 
and the two connected components of $C\setminus \{x,y\}$ have the same number of edges.
Denote by $k$ this number of edges, and up to exchanging $x$ and $y$, assume $x\in C\cap \Delta(h)$. 
Then  $h^k(x)$ coincides with $h^k(x^-)$, so $h^k$ is continuous at $x$ although $h$ is not.

\begin{lem}\label{lem_fake}
  Assume that $\Sigma(h)$ has no boundary connection, and that $x\in \Delta(h)$ is such that $h^k$ is continuous $x$
for some $k>0$. Then  $\Sigma(h)$ has a fake boundary.
\end{lem}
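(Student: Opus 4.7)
The plan is to construct the fake boundary explicitly from the orbit structure around $x$. Let $k\geq 1$ be the smallest integer with $h^k$ continuous at $x$; since $x\in \Delta(h)$ one has $k\geq 2$. Define the two iterate sequences $y_i=h^i(x)=h^i(x^+)$ and $\tilde y_i=h^i(x^-)=\lim_{\eps\to 0^+}h^i(x-\eps)$ for $0\leq i\leq k$. The minimality of $k$ gives $y_i\neq \tilde y_i$ for $0\leq i<k$, while $y_k=\tilde y_k=:y$.

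I would first check that $y\in \Delta(h^{-1})$. The distinct points $y_{k-1}$ and $\tilde y_{k-1}$ lie in two different continuity intervals of $h$, and the corresponding continuous extensions $\ol h_l,\ol h_{l'}$ both send them to the same value $y$. Hence $y$ is a common endpoint of two distinct closed image intervals $\ol J_l,\ol J_{l'}$, so $h^{-1}$ is discontinuous at $y$.

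Next I would identify the desired fake boundary with the boundary component $C$ of $\partial\Sigma(h)$ passing through $x$. Tracing $C$ from $x$ upward along each of the two edges incident to $x$, the trace continues by turning around at any $\Delta(h^{-1})$-vertex met at level $1$ and by passing straight through at any $\ol\cald\setminus\rond\cald$-vertex. I would argue that each of the two arcs reaches $y$ after exactly $k$ upward edges, with all intermediate vertices in $\ol\cald\setminus\rond\cald$.

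The main obstacle is excluding intermediate vertices in $\rond\cald\cap(\Delta(h)\cup\Delta(h^{-1}))$ other than $x$ and $y$. Were such a vertex $v$ to occur along $C$, I would use the orientation-preserving and right-continuous nature of $h$ together with the minimality of $k$ to extract from the local configuration at $v$ a leaf segment in the open interior of $\Sigma(h)$ whose two endpoints lie on $\partial\Sigma(h)$, with one endpoint in $\Delta(h^{-1})$ and the other in $\Delta(h)$; equivalently a pair $(v',m)$ with $v'\in\Delta(h^{-1})$ and $h^m(v')\in\Delta(h)$. This is a boundary connection, contradicting the hypothesis. Once no such $v$ exists one reads off $C\cap\rond\cald=\{x,y\}$ and that the two arcs of $C\setminus\{x,y\}$ have the same length $k$, so $C$ is the desired fake boundary.
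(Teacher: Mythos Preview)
Your overall architecture is the same as the paper's: set $k$ minimal, follow the two one-sided orbit sequences $y_i,\tilde y_i$, and exhibit the boundary component through $x$ as a fake boundary. Your verification that $y=y_k=\tilde y_k\in\Delta(h^{-1})$ is correct and is in fact the heart of the paper's contradiction as well.

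The gap is in the ``main obstacle'' paragraph. You assert that if some intermediate vertex $v$ along $C$ lies in $\rond\cald\cap(\Delta(h)\cup\Delta(h^{-1}))$, then from the local picture at $v$ and minimality of $k$ you can manufacture a pair $(v',m)$ with $v'\in\Delta(h^{-1})$ and $h^m(v')\in\Delta(h)$. This does not work as stated. Along the $y$-branch the first bad vertex is necessarily some $y_{i_0}\in\Delta(h^{-1})$ (since each $y_i$ is a left endpoint of some $\ol J$, so being in $\rond\cald$ forces $y_i\in\Delta(h^{-1})$, and $\Sing(h)=\emptyset$ rules out $\Delta(h)$). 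The only leaf segment visible at $v=y_{i_0}$ joins $x\in\Delta(h)$ \emph{backward} to $y_{i_0}\in\Delta(h^{-1})$, which is the wrong orientation for a boundary connection; and there is no reason the forward iterates $y_{i_0+m}$ should ever land in $\Delta(h)$.

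The correct argument---the one the paper uses---is indirect and does not produce a boundary connection at all. From $y_{i_0}\in\Delta(h^{-1})$ the \emph{absence} of boundary connections gives $y_{i_0+m}\notin\Delta(h)$ for every $m\geq 0$; combined with $y_{i_0}\in\rond\cald$ this forces, by a one-line induction, $y_{i_0+m}\in\rond\cald\setminus(\Delta(h)\cup\Delta(h^{-1}))$ for all $m\geq 1$. In particular $y_k\notin\Delta(h^{-1})$, contradicting your own Step~2. (The paper phrases the same contradiction as: $h^{-1}$ is then continuous at $y_k=\tilde y_k$, forcing $y_{k-1}=\tilde y_{k-1}$ and violating minimality of $k$.) Replace your hand-waved extraction of a boundary connection by this propagation argument and the proof goes through.
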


\begin{proof}
  Define $x_i^+=h^i(x)=h^i(x^+)$ and  $x_i^-=h^i(x^-)$,
and let $k>0$ be smallest such that $x_k^+=x_k^-$. Since $x\in \Delta(h)$, $k\geq 2$.

We claim that for all $i\leq k$, $x_i^+$ and $x_i^-$ lie in $\partial \Sigma(h)$.
Otherwise, consider some $x_i^+$ (resp.\ $x_i^-$) which does not lie in $\partial \Sigma(h)$.
Then because there is no boundary connection, $x_k^+$ (resp.\ $x_k^-$) does not lie in $\partial \Sigma(h)$, so $h\m$ is continuous at $x_k^+=x_k^-$.
It follows that $x_{k-1}^+=x_{k-1}^-$, a contradiction.

It follows that the component $C$ of $\partial \Sigma$ containing $x$ contains oriented edges 
joining $x_i^+$ to $x_{i+1}^+$ (resp. $x_i^-$ to $x_{i+1}^-$) for $i=0,\dots,k-1$,
and $C$ is a fake boundary.
\end{proof}

 \begin{lem}\label{lem_hS_exists}
  Every $h\in \IET(\cald)$ is conjugate to some $h_m\in \IET(\cald_m)$ such that $\Sigma(h_m)$ has no
boundary connection and no fake boundary.    
 \end{lem}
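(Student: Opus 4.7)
The plan is to begin by applying Lemma~\ref{lem;no_BC} to replace $h$ by a conjugate $h_0 \in \IET(\cald_0)$ whose suspension $\Sigma(h_0)$ has no boundary connection. It then suffices to exhibit an ``unsplitting'' conjugation that strictly decreases the number of fake boundaries without reintroducing any boundary connection; since $\Sigma(h_0)$ has only finitely many boundary components, iterating such a move finitely many times produces the required $h_m \in \IET(\cald_m)$.

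Suppose $\Sigma(h_0)$ has a fake boundary $C$ with distinguished vertices $x \in \Delta(h_0)$ and $y \in \Delta(h_0\m)$, and with both arcs of $C$ of common length $k \geq 2$. As observed just before Lemma~\ref{lem_fake}, the two arcs follow the one-sided orbits $x^+ = x, h_0(x^+), \dots, h_0^{k-1}(x^+)$ and $x^-, h_0(x^-), \dots, h_0^{k-1}(x^-)$, which meet again at $y = h_0^k(x^+) = h_0^k(x^-)$; in particular $h_0^k$ is continuous at $x$. Since $\Sigma(h_0)$ has no boundary connection, the $2(k-1)$ intermediate orbit points lie neither in $\Delta(h_0)$ nor in $\Delta(h_0\m)$; being vertices of $C$, they are therefore endpoints of interval components of $\cald_0$.

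The unsplitting move ``sews up'' the cycle corresponding to $C$. Pick $\delta > 0$ smaller than the distance separating any two of these orbit points or separating them from any other distinguished point of $\cald_0$. Cut $\cald_0$ into pieces along the endpoints of the half-intervals of length $\delta$ adjacent to each orbit point on the appropriate $+$ or $-$ side, and then reglue these pieces after swapping the $+$ and $-$ sides stage by stage along the orbit. Because the two arcs of $C$ have the same length $k$, the swap is self-consistent and yields a new domain $\cald'$ together with an IET $\phi \in \IET(\cald_0, \cald')$ supported in the $\delta$-neighborhood of the orbit, such that $h' = \phi h_0 \phi\m$ is continuous at the image of $x$ and such that $(h')\m$ is continuous at the image of $y$. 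Thus $\Sigma(h')$ has one fewer fake boundary. Since $\phi$ is supported in a small neighborhood of the closed orbit associated to $C$, and that orbit had no interaction with the rest of $\Sigma(h_0)$ by the no-boundary-connection hypothesis, no new boundary connection is created elsewhere.

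The main obstacle is the careful formalization and verification of the unsplitting move: one must check that $\cald'$ is genuinely a disjoint union of semi-open intervals and circles, that $\phi$ is indeed piecewise a translation with only finitely many discontinuities (hence an IET), and that in $\cald'$ the images of $x$ and $y$ become continuity points of $h'$ and $(h')\m$ respectively, without introducing new discontinuities elsewhere. The combinatorial input making all this consistent is the equality of the two arc lengths of the fake boundary, which ensures that the stage-by-stage swap closes up correctly.
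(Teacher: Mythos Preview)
Your overall strategy coincides with the paper's: first invoke Lemma~\ref{lem;no_BC} to eliminate boundary connections, then iteratively remove fake boundaries by an ``unsplitting'' conjugation, checking at each step that no boundary connection is reintroduced. The key observation---that the $2(k-1)$ intermediate orbit points $h_0^i(x^+)$ and $h_0^i(x^-)$ for $1\leq i\leq k-1$ are endpoints of interval components of $\cald_0$ (elements of $\ol{\cald_0}\setminus\rond{\cald_0}$)---is exactly what the paper uses.

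Where you diverge is in the implementation of the unsplitting move. The paper's version is considerably simpler and avoids the $\delta$-neighbourhood machinery entirely: since $x_i=h_0^i(x^+)$ is a left endpoint of some interval component and $y_i=h_0^i(x^-)\in\ol{\cald_0}\setminus\cald_0$ is a (missing) right endpoint, one simply forms $\cald'$ from $\cald_0\cup\{y_1,\dots,y_{k-1}\}\subset\ol{\cald_0}$ by identifying each $y_i$ with $x_i$, and takes the conjugating map $j:\cald_0\to\cald'$ to be the inclusion. No cutting, no swapping, no choice of $\delta$. With this description it is immediate that $\cald'$ is a domain and $j$ is an IET, and the verification that $\Sigma(h')$ has no boundary connection reduces to a short case analysis (a putative boundary connection in $\Sigma(h')$ either misses the glued points and was already in $\Sigma(h_0)$, or meets them and yields a shorter boundary connection in $\Sigma(h_0)$).

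Your $\delta$-swapping description is not wrong in spirit, but as you yourself note, it leaves the main verifications unperformed; moreover the phrase ``swapping the $+$ and $-$ sides stage by stage'' does not unambiguously specify $\phi$ or $\cald'$. I would recommend replacing that paragraph with the paper's direct gluing, which realises exactly the move you intend while making the checks you flag as obstacles essentially trivial.
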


 \begin{proof} First, by Lemma \ref{lem;no_BC} we can assume that $\Sigma(h)$ has no boundary connection. We will produce a sequence of conjugates of $h$ whose suspensions have no boundary connection, 
and 
 fewer fake boundaries. By finiteness of the number of boundaries, this will ensure the result. 

   Assume that $C$ is a fake boundary,
denote by $C_l,C_r$ the two components of $C\setminus (C\cap\rond \cald)$,
 $k$ their number of edges,  and consider $x\in C\cap \Delta(h)$.
Note that $k\geq 2$ since otherwise, $h$ would be continuous at $x$, a contradiction.
Introduce $x_i=h^i(x)\in C_r$ for $i=\{1,\dots, k-1\}$, 
and $y_i=h^i(x^-)\in C_l$. Note that $x_i$ is a (left) endpoint of $\cald$, but $y_i\in \ol \cald\setminus \cald$.

We now perform a gluing move on $h$ as follows.
Let $\cald'$ be obtained from $\cald\cup \{y_1,\dots,y_{k-1}\}\subset \ol \cald$ by identifying 
$y_i$ with $x_i$. Clearly, $\cald'$ is a domain, and the inclusion $j:\cald\ra \cald'$ allows to
define $h'=jhj\m \in \IET(\cal D')$. 

This is a general construction, and we claim that if $\Sigma(h)$ has no boundary connection, then neither does $\Sigma(h')$, indeed 
assume $x,h'(x),\dots,(h')^k(x)$ is a boundary connection in
$\Sigma(h')$, then,  if no $j^{-1}((h')^i(x))\in C$ ($i=0,\dots,k$), 
its image under $j^{-1}$ would also be one in $\Sigma(h)$; and
if some $j^{-1}((h')^i(x))\in C$, then a shorter path would provide a boundary connection in $\Sigma(h)$.

Since $\Sigma(h')$ has one less fake boundary, the lemma follows by induction.
  \end{proof}

\begin{lem}\label{lem_hS_is_min}
 Let $h_m$ be as above. 
Then for all $n\in\bbN$, $\Disc(h_m^n)=n\Disc(h_m)$.
\end{lem}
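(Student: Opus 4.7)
The plan is to establish $\Delta(h_m^n) = \bigsqcup_{i=0}^{n-1} h_m^{-i}\Delta(h_m)$ as a \emph{disjoint} union, which immediately yields $\Disc(h_m^n) = n\Disc(h_m)$. Write $h := h_m$. Since Lemma \ref{lem_subadd} already gives the inclusion $\Delta(h^n) \subseteq \bigcup_{i=0}^{n-1} h^{-i}\Delta(h)$, two things remain to be checked: (i) the union on the right is disjoint, and (ii) every point of the union is genuinely a discontinuity of $h^n$.

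For (ii), I will use the contrapositive of Lemma \ref{lem_fake}, which under our hypotheses asserts that no $h^m$ (for $m \geq 1$) is continuous at any point of $\Delta(h)$. Given $x$ lying in a unique $h^{-i_0}\Delta(h)$ (by disjointness), set $u := h^{i_0}(x) \in \Delta(h)$. For $j < i_0$ one has $h^j(x) \notin \Delta(h)$, so $h$ is continuous at each such point; hence $h^{i_0}$ is continuous at $x$ and $h^{i_0}(x^-) = u$. Decomposing $h^n = h^{n-i_0}\circ h^{i_0}$ gives $h^n(x) = h^{n-i_0}(u)$ and $h^n(x^-) = h^{n-i_0}(u^-)$, and applying the contrapositive with $m = n-i_0 \geq 1$ forces $h^{n-i_0}(u^-) \neq h^{n-i_0}(u)$, whence $x \in \Delta(h^n)$.

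For (i), suppose for contradiction that $h^k(u) = v$ with $u, v \in \Delta(h)$ and $k \geq 1$. Since $u$ is the left endpoint of some continuity interval $I_{i_0}$ of $h$, its image $h(u)$ is the left endpoint of $J_{i_0} := h(I_{i_0})$. If $h(u) \in \rond\cald$, maximality of continuity intervals forces $h(u) \in \Delta(h^{-1})$: otherwise the adjacent $J$-interval on the left would share the same $h^{-1}$-translation as $J_{i_0}$ and the two would merge. Then $y := h(u) \in \Delta(h^{-1})$ together with $h^{k-1}(y) = v \in \Delta(h)$ forms a boundary connection, contradicting the hypothesis. Otherwise $h(u)$ lies in the finite set $\partial\cald \cap \cald$ of left endpoints of components of $\cald$; the same maximality argument applied at those left endpoints shows $h(\partial\cald \cap \cald) \subseteq \Delta(h^{-1}) \cup (\partial\cald \cap \cald)$. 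The orbit of $h(u)$ under $h$ therefore either stays in $\partial\cald \cap \cald$ forever---impossible, because eventually $h^{k-1}(h(u)) = v$ would have to lie in $\Delta(h) \subseteq \rond\cald$---or it enters $\Delta(h^{-1})$ at some first step $j$, producing $\beta := h^j(h(u)) \in \Delta(h^{-1})$ with $h^{k-1-j}(\beta) = v \in \Delta(h)$, again a boundary connection.

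The main obstacle will be the careful bookkeeping in the case $h(u) \in \partial\cald \cap \cald$: one must iterate $h$ through the finite set of component left-endpoints and verify that the orbit must either cycle (which is absurd, since it would then never reach $\Delta(h) \subseteq \rond\cald$) or hit a point of $\Delta(h^{-1})$, producing the required boundary connection.
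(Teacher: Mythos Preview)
Your proof is correct and follows essentially the same route as the paper's. The paper phrases the disjointness (your (i)) and the continuity of the backward iterates (needed for (ii)) in one stroke via the suspension, asserting that no boundary connection forces $h_m^{-i}(x)\notin\partial\Sigma$ for $x\in\Delta(h_m)$ and $i>0$; your argument is a direct unwinding of this claim, with the added virtue that you make explicit the case where the forward orbit of $h(u)$ passes through left endpoints of components of $\cald$ before reaching $\Delta(h^{-1})$, a point the paper leaves implicit. Your step (ii) then composes forward ($h^n=h^{n-i_0}\circ h^{i_0}$) where the paper composes backward ($h^{n-k}=h^n\circ(h^{-1})^k$), which is only a cosmetic difference.
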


\begin{proof}
We can assume $n>0$. 
By Lemma \ref{lem_subadd}, one has $\Disc(h_m^n)\leq n\Disc(h_m)$.

Let $\Delta\subset \cald$ be the set of discontinuity points of $h_m$.
Recall that $\Delta$ lies in the boundary of the suspension $\Sigma$ of $h_m$.
Let $x\in \Delta$. 
By Lemma \ref{lem_fake}, $h_m^k$ is discontinuous at $x$ for all $k>0$.
We claim that $h_m^n$ is discontinuous at each point $h_m^{-k}(x)$ for $k=0,\dots, n-1$.
Indeed, since there is no boundary connection, $h_m^{-i}(x)\notin \partial \Sigma$  for all $i>0$, 
so $h_m\m$ is continuous at all these points. 
If $h_m^n$ was continuous at $h_m^{-k}(x)$ $0\leq k<n$, then $h_m^n\circ (h_m\m)^k=h_m^{n-k}$ would be continuous at $x$, a contradiction.

We proved that $h_m^n$ is discontinuous on $\bigcup_{i=0}^n h_m^{-i}(\Delta)$.
We claim that this is a disjoint union. Indeed,
 the absence of boundary connection implies that $h_m^{-i}(x)$ is not in $\partial \Sigma$ hence not in $\Delta$ for all $i>0$.
It follows that all $h_m^{-i}(\Delta)$ are disjoint, and the lemma follows.
\end{proof}

Proposition \ref{prop_disc} is proved.

\section{Commutation relations involving multi-rotations}
\label{sec_rotation}
Let $\cald$ be a domain.
For a subset $Y \subset \cald$ and $\epsilon >0$,  let $Y_{\epsilon}$ denote
the closed $\epsilon$-neighborhood of $Y$ in $\cald$, where
we assume that the distance between two components of $\cald$
is infinite.
We define  $\int_\eps(Y)=\{x\in Y | [x-\eps,x+\eps] \subset Y\}$.
By definition, we have $\int_{\epsilon}(\cald\backslash Y) \cap Y_{\epsilon}=\emptyset$.

Recall that the support of $T\in \IET(\cald)$ is
 $\supp(T)=\{x \in \cald|T(x)\neq x\}$.
We denote by $[g,h]=g\m h\m gh$ the commutator of two group elements $g,h$.
The following proposition allows to produce elements with small support.

\begin{prop}\label{prop_support}
  Let $R\in \IET(\cald)$ be a multi-rotation, and consider any $S\in \IET(\cald)$. 
Let $X=(\ol \cald\setminus \rond\cald)\cup \Delta(S)\cup\Delta(S\m)$.
Then for all $\eps>0$, there exists $n\geq 1$ such that the support of $U=[[S,R^n],R^n]$
is contained in $X_{\eps}$, the closed $\eps$-neighbourhood of $X$.
\end{prop}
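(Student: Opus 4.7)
The plan is to choose $n$ so that $R^n$ is extremely close to the identity in the compact abelian group of multi-rotations of $\cald$, and then to verify by a direct local computation that $U(y)=y$ for every $y \notin X_\eps$.

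\emph{Step 1 (choice of $n$).} The multi-rotations of $\cald$ form a compact abelian group, isomorphic to a finite-dimensional torus (one circle factor per circle component of $\cald$). The closure of the cyclic subgroup generated by $R$ is a closed subgroup, hence contains the identity as a limit point. Consequently there exist arbitrarily large $n\geq 1$ such that $R^n$ is a multi-rotation each of whose rotation angles is less than $\eps/10$. Fix such an $n$. Then $R^n$ moves every point by distance at most $\eps/10$, acts as the identity on each interval component of $\cald$, and on each circle component acts as a uniform translation (rotation) by some amount of absolute value $\leq \eps/10$.

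\emph{Step 2 (the nice region near $y$ and $S(y)$).} Suppose $y\notin X_\eps$, so $y$ is at distance $>\eps$ from $\Delta(S)\cup\Delta(S\m)\cup(\ol\cald\setminus\rond\cald)$. Then the closed ball $B_\eps(y)$ is well-defined in $\cald$, lies in $\rond\cald$, contains no point of $\Delta(S)$, so $S$ is continuous and isometric on $B_\eps(y)$ and maps it onto an isometric ball $B_\eps(S(y))$. Consequently $S\m$ is a local translation on $B_\eps(S(y))$, this ball avoids $\Delta(S\m)$, and $S(y)$ is also at distance $\geq\eps$ from $\partial\cald$. Thus in both balls, $S$, $S\m$, $R^n$ and $R^{-n}$ are all local translations (on $B_\eps(y)$ the shift of $R^n$ is some $\beta$ with $|\beta|\leq\eps/10$, constant throughout $B_\eps(y)$ since it lies in a single component; analogously some $\gamma$ on $B_\eps(S(y))$).

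\emph{Step 3 (orbit of $y$ under $U$).} One writes $U=T\m R^{-n}TR^n$ with $T=S\m R^{-n}SR^n$, and traces $y$ through the eight applications. Because each shift has size $\leq \eps/10$, the total displacement at any intermediate step is at most $3\eps/10$ from either $y$ or $S(y)$, so every intermediate point stays inside $B_\eps(y)\cup B_\eps(S(y))$, where all maps are translations. In this translation regime one computes directly that $T$ acts on $B_{\eps/2}(y)$ as the translation $z\mapsto z+(\beta-\gamma)$; then since translations on a single component commute, the outer commutator gives
\[
U(y)=y+\bigl(-(\beta-\gamma)-\beta+(\beta-\gamma)+\beta\bigr)=y.
\]
A step-by-step check covers all four cases (whether each of $y$, $S(y)$ lies on an interval or a circle), and in each case the accumulated shift cancels perfectly and $U(y)=y$.

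The main obstacle is ensuring the orbit never leaves the nice region through the eight applications, and more fundamentally, seeing that a \emph{single} commutator $[S,R^n]$ is \emph{not} in general supported in $X_\eps$: for instance, when $y$ lies on an interval but $S(y)$ on a circle, $R^n$ is trivial on the $y$ side and non-trivial on the $S(y)$ side, producing a translation mismatch $\beta-\gamma\neq 0$ for $T=[S,R^n]$. The role of the outer commutator with $R^n$ is exactly to cancel this mismatch, which is why the statement uses the iterated commutator rather than a single one.
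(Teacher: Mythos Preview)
Your argument is correct and follows the same approach as the paper. The paper organizes the computation into two short lemmas: first, that $[S,R^n]$ restricts to a small translation on each component of $\int_\eps(\cald\setminus X)$ (your $\beta-\gamma$), and second, that the commutator of two small translations on a common interval is the identity. Your Step~3 is simply these two lemmas unrolled into a single ten-step orbit trace; the choice of $n$ in Step~1 and the verification that $B_\eps(S(y))$ is again a ``nice'' ball in Step~2 match the paper's setup exactly. Your closing remark explaining why the \emph{double} commutator is necessary (the mismatch $\beta\neq\gamma$ when $y$ and $S(y)$ lie on different components) is a nice piece of motivation that the paper does not spell out.
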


\begin{rem}\label{rem_bord}
If $\cald$ is a union of circles, then $X=\Delta(S)\cup\Delta(S\m)$. 
\end{rem}

We first prove a few easy lemmas.
We will be concerned with subsets $E$ that consist of a finite union of sub-intervals.

\begin{lemma}\label{lem_small}
  Consider a subset $E\subset \cald$,
 $g\in\IET(\cald)$ such that $g$ is continuous (\ie a translation) on each connected component of $E$. 
Consider $h\in\IET(\cald)$ whose restriction to each component of $E$, and of $g(E)$, is  a {(continuous)} 
translation of { amplitude}  $\in [-\epsilon, \epsilon]$.

Then, on each component of   $\int_{2\epsilon}(E)$, 
the restriction of
 $[g,h]$ is a translation of { amplitude} $\in [-2\epsilon, 2\epsilon]$.
\end{lemma}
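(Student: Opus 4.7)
My plan is a direct computation. The guiding idea is that on a set where $g$ and $h$ are \emph{both} translations, their commutator is the identity; the content of the lemma is a quantitative version, saying that when $h$ need only be a translation on $E$ and on $g(E)$ separately (with the two amplitudes possibly differing by up to $2\eps$), then $[g,h]$ is still nearly the identity on the interior of $E$.

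First I would pick a connected component $I$ of $\int_{2\eps}(E)$ and any $x\in I$. Since $I$ is connected and contained in $E$, it lies in a single connected component $E_0$ of $E$, on which $g$ is a translation by some $t_g$ and $h$ is a translation by some $\tau$ with $|\tau|\le\eps$. Writing $E_0':=g(E_0)$, a component of $g(E)$, the hypothesis provides $\tau'$ with $|\tau'|\le\eps$ such that $h|_{E_0'}$ is translation by $\tau'$. Because $g|_{E_0}$ is an isometry, $g(x)$ is $2\eps$-deep in $E_0'$.

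Then I would trace $x$ through the factors of $[g,h]=g\m h\m g h$:
$$x\ \xra{h}\ x+\tau\ \xra{g}\ x+\tau+t_g\ \xra{h\m}\ x+\tau+t_g-\tau'\ \xra{g\m}\ x+\tau-\tau'.$$
At each arrow the task is to confirm that the current point lies in the component where the next map's translation formula is valid. The first arrow uses the $2\eps$-depth of $x$ in $E_0$ together with $|\tau|\le\eps$ to keep the image in $E_0$; the second lands in $E_0'$ and is $\eps$-deep there since it is within $\eps$ of $g(x)$; the third uses $|\tau'|\le\eps$ to conclude that $x+\tau+t_g\in h(E_0')$, so that $h\m$ acts there as translation by $-\tau'$; the fourth applies $g\m|_{E_0'}$, which is translation by $-t_g$, returning us to $E_0$. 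The net displacement $\tau-\tau'$ depends only on the pair of components $(E_0,E_0')$, not on $x$, so $[g,h]|_I$ is a single translation whose amplitude $|\tau-\tau'|$ is at most $2\eps$.

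The main and only obstacle is exactly this bookkeeping, ensuring at each of the four steps that the intermediate image stays in the component on which the next factor is a known translation. The hypothesis of $2\eps$-depth (rather than $\eps$-depth) is precisely what provides the two separate $\eps$-cushions needed: one to absorb the displacement by $h$ in the first step, and another to absorb the displacement by $h\m$ in the third.
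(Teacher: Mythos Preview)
Your proof is correct and follows essentially the same route as the paper's own argument: both pick a component of $E$, name the translation amounts of $h$ on that component and on its $g$-image, and then trace a point through the four factors of $[g,h]=g\m h\m gh$ to arrive at a net displacement of $\tau-\tau'$. Your write-up is slightly more explicit about the intermediate membership checks (in particular, justifying why $h\m$ acts as translation by $-\tau'$ at the relevant point), but the computation and the use of the $2\eps$-depth are identical.
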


\begin{proof}
Let $I$ be a connected component of $E$, and $t,t'\in[-\eps,\eps]$ be such that $h_{|I}(x)=x+t$
and $h_{|g(I)}(x)=x+t'$.
Then for $x\in \int_{2\eps}(I)$, $h(x)=x+t\in \int_\eps(I)$, and since $g_{|I}$ commutes with translations, 
$h\m gh(x)=h\m( g(x)+t)=g(x)+t-t'$. Similarly, since $g(x)+t-t'\in g(I)$, $g\m h\m gh(x)=x+t-t'$.
\end{proof}

As an immediate corollary, we have:

\begin{lemma}\label{lem;commut1}
 Let $g\in\IET(\cald)$ be arbitrary, and $r\in\IET(\cald)$ a multi-rotation which moves all points of $\cald$ by at most $\eps/2$.
 Let $X =\Delta(g)\cup\Delta(g\m)$.

Then  the restriction of $[g,r]$  to each component of 
$\int_{\eps}(\cald \setminus X)$ is a (continuous) translation of amplitude in $ [-\epsilon, \epsilon]$.\qed
\end{lemma}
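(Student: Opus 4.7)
The plan is to deduce Lemma \ref{lem;commut1} as an essentially mechanical application of Lemma \ref{lem_small}, with the choice $E=\cald\setminus X$ and with the parameter $\epsilon$ of that lemma replaced by $\eps/2$. With this substitution, the interior $\int_{2(\eps/2)}(E)=\int_\eps(\cald\setminus X)$ and the output amplitude bound $2(\eps/2)=\eps$ match exactly the quantities appearing in the statement we want to prove, so the whole task reduces to checking the two hypotheses of Lemma \ref{lem_small} in this setting.

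The first hypothesis, continuity of $g$ on each component of $E$, holds by construction: $X$ contains $\Delta(g)$, so $g$ has no discontinuity in $\cald\setminus X$ and is in particular a translation on each connected component of $E$. For the second hypothesis, I need to see that $r$ restricts to a translation of amplitude at most $\eps/2$ on each connected component of $E$ and of $g(E)$. This is where I would invoke that $r$ is a multi-rotation: on each connected component of $\cald$ it is either the identity (on an interval) or a rotation of angle of absolute value at most $\eps/2$ (on a circle), since by hypothesis $r$ moves every point by at most $\eps/2$. Every component of $E$ and of $g(E)$ lies inside a single connected component of $\cald$, so the restriction of $r$ to it is a continuous translation of amplitude in $[-\eps/2,\eps/2]$, as required. (The mild abuse to keep in mind is that a rotation of a circle is read as a ``translation'' of the same amplitude with respect to the intrinsic metric on that component; this is the convention implicit in Lemma \ref{lem_small}.)

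With the two hypotheses in hand, Lemma \ref{lem_small} applied with parameter $\eps/2$ delivers that $[g,r]$ restricts on each component of $\int_\eps(\cald\setminus X)$ to a translation of amplitude in $[-\eps,\eps]$, which is the conclusion of Lemma \ref{lem;commut1}. There is no genuine obstacle: the only content of the argument is the factor-of-two rescaling that turns the bound $\eps/2$ on $r$ into the bound $\eps$ on $[g,r]$, reflecting the fact that forming the commutator can at most double the displacement contributed by $r$.
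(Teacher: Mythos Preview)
Your proof is correct and is exactly the approach the paper intends: the paper states this lemma as an ``immediate corollary'' of Lemma~\ref{lem_small} with no further argument, and your verification of the two hypotheses (with the $\eps/2$ rescaling) is precisely the content of that deduction. The one observation you add beyond the paper is the remark about reading a rotation of a circle as a ``translation'' in the intrinsic coordinate, which is indeed the implicit convention.
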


\begin{lemma}\label{lem;commut1.5}
Let $E\subset\cald$. 
Consider $g, h\in \IET(\cald)$  whose restrictions on  each component of $E$ are (continuous) translations 
of amplitude $\in [-\epsilon, \epsilon]$.  

Then, $[g,h]$ is the identity in restriction to $\int_{\eps}(E)$.
\end{lemma}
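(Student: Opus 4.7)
The plan is to follow the orbit of an arbitrary point $x\in \int_\eps(E)$ through the four factors of $[g,h]=g\m h\m gh$, and verify that at each stage the next IET coincides with a genuine translation on a set containing the current point, so that the composition telescopes to the identity.

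First, I would use the definition of $\int_\eps(E)$ to observe that the closed neighborhood $[x-\eps,x+\eps]$ sits inside $E$; being connected, it lies in a single connected component $I_0$ of $E$. By hypothesis, $g_{|I_0}$ and $h_{|I_0}$ are translations by some $t_g,t_h\in[-\eps,\eps]$.

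Then the four-step bookkeeping proceeds in order. Since $|t_h|\leq \eps$, the image $h(x)=x+t_h$ still lies in $I_0$, so applying $g$ gives $gh(x)=x+t_h+t_g$; then, noting that $x+t_g\in I_0$ (again because $|t_g|\leq\eps$ and $[x-\eps,x+\eps]\subset I_0$), the point $x+t_h+t_g$ lies in $h(I_0)=I_0+t_h$, where $h\m$ is the translation by $-t_h$, so $h\m gh(x)=x+t_g$; finally, $x+t_g\in g(I_0)=I_0+t_g$, on which $g\m$ is the translation by $-t_g$, landing us back at $x$. This shows $[g,h](x)=x$.

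The one thing that needs checking, and which is the only potential obstacle, is that at each of the three composition steps the point we are about to move is in fact contained in the subset where the relevant IET acts as the expected translation. This is precisely what the $\eps$-interior assumption buys us: the amplitudes of $g$ and $h$ being bounded by $\eps$ means no intermediate image can escape $I_0$, $h(I_0)$ or $g(I_0)$. Beyond this uniform control, the proof is just routine unwinding of definitions, and no additional machinery seems required.
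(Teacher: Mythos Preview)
Your proof is correct and follows essentially the same approach as the paper: both arguments fix $x\in\int_\eps(E)$, use that $[x-\eps,x+\eps]$ lies in a single component $I_0$ of $E$ where $g,h$ act by translations of amplitude at most $\eps$, and then observe that the relevant intermediate images stay where they need to. The paper's write-up is marginally slicker---it just checks that $g(x),h(x)\in I_0$ so that $gh(x)=x+t_g+t_h=hg(x)$ and concludes immediately---but your four-step tracking through $g\m h\m gh$ amounts to the same thing.
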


\begin{proof}
  Let $x\in \int_{\eps}(E)$. Denote $\alpha,\beta$ the amplitude of the translations induced by $g$ and $h$
in the component of $E$ containing $x$. Since $g(x),h(x)\in E$, $hg(x)=x+\alpha+\beta=gh(x)$.
The lemma follows.
\end{proof}

\begin{proof}[Proof of Proposition \ref{prop_support}]
  Let $R$ be a multi-rotation on a domain $\cald$ with $k$ circles. We view $R$ as an element of the torus group $({\Bbb S}^1)^k$.
Since this group is compact, powers of $R$ get arbitrarily close to the identity.
In other words, for all $\eps>0$, there exists $n$ such that $R^n$ moves all points of $\cald$ by at most $\eps/2$.
By Lemma \ref{lem;commut1}, the restriction of $[S,R^n]$ to each component of $\int_{\eps}(\cald\setminus X)$ 
is a translation of amplitude in $[-\epsilon,\epsilon]$.
By Lemma \ref{lem;commut1.5}, $[[S,R^n],R^n]$ is the identity in restriction to 
$\int_{2\eps}(\cald\setminus X)$.  
The proposition is proved.  
\end{proof}

\begin{thm} \label{thm_no_MR_in_free}
  A  non-abelian free subgroup of $\IET(\cald)$ contains no non-trivial element conjugate to a 
virtual 
multi-rotation. 
\end{thm}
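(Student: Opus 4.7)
My plan is to argue by contradiction. Suppose $F \leq \IET(\cald)$ is a non-abelian free subgroup containing a non-trivial element conjugate to a virtual multi-rotation. Conjugating by a suitable IET, I reduce to the case where this element lies in $F$, and by raising to a well-chosen power (using that $F$ is torsion-free, that every virtual multi-rotation has a power which is a multi-rotation, and that every infinite-order multi-rotation has a power which is irrational), I may assume $F$ contains an irrational multi-rotation $R$. Since $F$ is non-abelian and centralizers in a free group are cyclic, I can pick $T \in F$ with $[T,R] \neq 1$.

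The first step is to produce a small-support element of $F$. Let $X = (\ol\cald \setminus \rond\cald) \cup \Delta(T) \cup \Delta(T\m)$, a finite set. For $\eps > 0$ small, Proposition~\ref{prop_support} yields $n \geq 1$ such that $U := [[T, R^n], R^n]$ satisfies $\supp U \subset X_\eps$; since $R^n$, and hence $U$, is the identity off $\supp R$, in fact $\supp U \subset X_\eps \cap \supp R$. Because $\supp R$ is a disjoint union of circles on which $R$ acts by irrational rotations, for $\eps$ small enough I can choose $m \neq 0$ with $R^m(X_\eps \cap \supp R)$ disjoint from $X_\eps \cap \supp R$ (only finitely many rotation amounts on each circle need be avoided). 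Then $U$ and $R^m U R^{-m}$ have disjoint supports, hence commute as IETs, and $[U, R^m U R^{-m}] = 1$ in $F$.

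The remainder of the argument is an abstract free-group manipulation centered on the following lemma: if $F$ is free, $u \neq 1$ and $[u, g u g\m] = 1$, then $[u, g] = 1$. One-line proof: the centralizer $C_F(u) = \grp{w}$ is cyclic, $g u g\m \in \grp{w}$ is conjugate to $u$ and so has the same reduced length, forcing $g u g\m = u^{\pm 1}$; the case $g u g\m = u\m$ would give $g^2 \in \grp{w}$, hence $g \in \grp{w}$, contradicting $g u g\m \neq u$. Applied (assuming $U \neq 1$) to $u = U$ and $g = R^m$, the lemma gives $[U, R^m] = 1$; since a non-trivial element of a free group lies in a unique maximal cyclic subgroup, this forces $R \in C_F(U)$, i.e.\ $[U, R] = 1$. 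Writing $U = A R^n A\m R^{-n}$ with $A = [T, R^n]$ and rearranging $[R, U] = 1$ shows that $A R^n A\m$ commutes with $R$, and hence (being conjugate to $R^n$ and lying in the cyclic $C_F(R) = C_F(R^n)$) with $R^n$. A second application of the lemma, with $u = R^n$ and $g = A$, yields $[A, R^n] = 1$, so $U = 1$, contradicting $U \neq 1$. In the residual case $U = 1$, the relation means $T R^n T\m$ commutes with $R^n$, so the lemma with $u = R^n$, $g = T$ gives $[T, R^n] = 1$, whence $T \in C_F(R^n) = C_F(R)$ and $[T, R] = 1$, again contradicting the choice of $T$.

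The main obstacle I expect is the careful interplay between the geometric and algebraic sides. On the geometric side, one needs $R$ to be an \emph{irrational} multi-rotation so that its powers move the small set $X_\eps \cap \supp R$ off itself; on the algebraic side, the free-group structure (unique maximal cyclic subgroups, no torsion) is what eliminates the bothersome ``sign-flip'' case $g u g\m = u\m$ in the lemma, and what propagates $[U, R^m] = 1$ to $[U, R] = 1$ so that the commutator expansion can be unwound.
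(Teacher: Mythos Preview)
Your free-group manipulation in the second half is fine (modulo the cosmetic point that conjugate elements of a free group have the same \emph{cyclically reduced} length, not the same reduced length; translation length on the Cayley tree is the cleanest invariant here). The real problem is earlier, in the geometric step.

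The assertion ``since $R^n$, and hence $U$, is the identity off $\supp R$'' is false. The element $U=[[T,R^n],R^n]$ involves $T$, and $T$ need not preserve $\cald\setminus\supp R$. Concretely: take $\cald=C_1\sqcup C_2$ two circles of equal length, $R$ an irrational rotation on $C_1$ and the identity on $C_2$, and $T$ the involution swapping a half of $C_1$ with a half of $C_2$. Then $[T,R]\neq 1$, but a direct computation (with $n$ chosen so that $R^n$ is a very small rotation) shows that $U$ moves points of $C_2$ near the discontinuities of $T$ into $C_1$. So $\supp U$ meets $C_2=\cald\setminus\supp R$. Since every power of $R$ fixes $C_2$ pointwise, $R^m(\supp U)$ always contains these same points of $C_2$, and you can never make $R^m(\supp U)$ disjoint from $\supp U$. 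Your disjoint-support argument therefore collapses: conjugating by powers of $R$ alone is not enough.

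This is exactly the obstacle the paper's proof is designed to overcome. Instead of working with the pair $(R,T)$, the paper passes to the pair $(R,R')$ with $R'=SRS^{-1}$ (your $T$ playing the role of $S$), so that \emph{both} generators are conjugate to irrational multi-rotations. After restricting the domain to $\supp R\cup\supp R'$ and turning all interval components into circles, one takes $U=[[R',R^n],R^n]$ and then conjugates $U$ not by a power of $R$ but by a product $R'^{p'}R^p$. The point of Lemma~\ref{lem_disj} is precisely that $R$ handles the part of $X_\eps$ inside $\supp R$, while $R'$ handles the remainder (which lies in $\supp R'$). Your argument can be repaired along these lines, but not by conjugating with $R^m$ alone.
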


\begin{proof}
We argue by contradiction. 
Consider $R$ conjugate to a non-trivial  virtual multi-rotation, contained in a non-abelian free subgroup $H$ of $\IET$.
Up to conjugating $H$ to a group of IETs on some other domain, we can assume that $R$ is indeed a 
virtual multi-rotation of $\cald$.
Since $R$ has infinite order, we can change $R$ to some power and assume that $R$ is an irrational multi-rotation.
Consider $S\in H$ not commuting with $R$, so that $R,S$ form a basis of the  subgroup they generate (if two elements generate a non-abelian free group,
they are a basis of this free group by Hopf property of free groups).
Define $R'=SRS\m$, and note that $\{R,R'\}$ freely generates a free subgroup of $H$. 
We aim to find a non-trivial relation between $R,R'$ to get a contradiction.

Let $\calc\subset \cald$ be the support of $R$ (a union of circles of $\cald$),
and $\calc'\subset \cald$ be the support of $R'$ (which may be arbitrary).
Up to restricting to a subdomain of $\cald$, 
we can assume that $\calc\cup\calc'=\cald$, and we can still assume that $\cald$ is a union of circles by identifying each interval of $\cald$
(on which $R$ is the identity) to a circle of same length.
Define $X=\Delta(R')\cup\Delta(R'{}\m)$, and $X_\eps$ its closed $\eps$-neighbourhood.

\begin{lem}\label{lem_disj}
There exists $p,p'\geq 1$ and $\eps>0$ such that
$R'^{p'}R^p(X_\eps)\cap X_\eps=\es$.
\end{lem}

\begin{proof}
Since $\calc$ is the support of the irrational multi-rotation $R$, 
there exists $p\geq 1$ such that $R^p(X\cap\calc)\cap X=\es$. 
  Denote by
  $Y_\eps=R^p(X_\eps)=N_\eps(R^p(X))$ since $R$ is continuous.  For $\eps$ small enough, $Y_\eps\cap\calc$ does not intersect
  $X_\eps$. 
  We need to prove that $R'^{p'}Y_\eps\cap X_\eps=\es$ for suitable $p'$ and $\eps$.  Since $R'$ is the identity on $\cald\setminus
  \calc'$, and since $\cald\setminus\calc'\subset \calc$, for any $p'$, $R'^{p'}(Y_\eps)\cap (\cald\setminus \calc')\subset
  Y_\eps\cap\calc$ and does not intersect $X_\eps$.  Thus, we need only to prove that $R'^{p'}(Y_\eps)\cap\calc'$ does not
  intersect $X_\eps$ for suitable $p'$ and $\eps$, or equivalently (since $R'=SRS\m$), that $R^{p'}(S\m (Y_\eps)\cap\calc)$ does
  not intersect $S\m(X_\eps)$.  Now there exist finite sets $X',Y'\subset \cald$ such that, for all $\eps>0$ small enough,
  $S\m(X_\eps)$ and $S\m(Y_\eps)$ are contained in the $\eps$-neighbourhood of $X'$ and $Y'$ respectively.  Denote by
  $X'_\eps,Y'_\eps$ these neighbourhoods.  As above, 
since $X'$ and $Y'$ are finite, 
there exists $p'$ and $\eps$ such that $R^{p'}(Y'\cap\calc)$ does not
  intersect $X'$, so $R^{p'}(Y'_\eps\cap\calc)$ does not intersect $X'_\eps$ for $\eps$ small enough.
This proves the lemma.
\end{proof}

We are ready to conclude the proof of Theorem \ref{thm_no_MR_in_free}
by exhibiting a non-trivial relation between $R$ and $R'$.
In view of Lemma \ref{lem_disj}, consider $p,p'\geq 1$ and $\eps>0$ such that $R'^{p'}R^p(X_\eps)\cap X_\eps=\es$.
By Proposition \ref{prop_support}, there exists $n\geq 1$ such that the support of $U=[[R',R^n],R^n]$ is contained in
$X_\eps$ (see Remark \ref{rem_bord}). 
Since $U$ and its conjugate under  $R'^{p'}R^p$ have disjoint supports, they commute.
In other words, we have the relation $[U,R'^{p'}R^p U R^{-p}R'^{-p'}]=\id$.

One easily checks that this relator is non-trivial, 
\ie that $[U,R'^{p'}R^p U R^{-p}R'^{-p'}]$, 
  viewed as an element of the free group $F$ freely generated by $R,R'$, is non-trivial in $F$.
First $[R',R^n]$ is  non-trivial in $F$. Since commuting elements of $F$ are powers of a common element \cite[Prop. I.2.17-18]{LyndonSchupp},
we see that $R^n$ does not commute with $[R',R^n]$, so $U\neq 1$.
The normalizer of the maximal cyclic subgroup containing $U$ in $F$ being cyclic 
 \cite[Prop. I.2.19]{LyndonSchupp},
we get that  $[U,R'^{p'}R^p U R^{-p}R'^{-p'}]=1$ would imply $[U,R'^{p'}R^p]=1$ in $F$,
which is impossible since $U$ and $R'^{p'}R^p$ are not powers of a common element.
Theorem \ref{thm_no_MR_in_free} is proved.
\end{proof}

\section{Application to Lie groups}
\label{sec_Lie}
Consider a connected Lie group $L$.

Recall  that the exponential map of a Lie group is a local diffeomorphism from the Lie algebra of the group, in the group. 
Any element in its image belongs to a one-parameter subgroup, and therefore admits roots of arbitrary order (in other words, it is infinitely divisible). See for instance  \cite[Chap.2, Proposition 3.2]{OnishschikVinberg}.

Recall also that there exists a unique maximal
connected solvable normal Lie subgroup, ${\rm Rad}L$, called the
radical of $L$, and $L/{\rm Rad}L$ is semisimple (see \cite[Chap. 2,
Theorem 5.11]{OnishschikVinberg} for instance). On the other hand, every non-trivial
semisimple Lie  group contains a non-abelian free subgroup that is
dense \cite[final Corollary]{Kuranishi}.  In particular there are free
subgroups of $L$ generated by elements arbitrarily close to the identity.
\footnote{Note that this argument using \cite{Kuranishi} can be replaced by the
  genericity of free subgroups in $L$, provided there is one free
  subgroup to begin with, as recalled in the introduction}

\begin{prop}\label{prop;L_is_sol} Let $L$ be a connected Lie group. 
  If $L \hookrightarrow \IET$ is an injective homomorphism
(not necessarily continuous),  then 
 $L$ is solvable. 
\end{prop}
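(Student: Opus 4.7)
The plan is to argue by contradiction: assuming $L$ is non-solvable, I will construct a non-abelian free subgroup of $\IET$ containing a non-trivial divisible element, and then invoke Theorem \ref{thm_no_MR_in_free} together with item \ref{item_divis} of Corollary \ref{cor_growth} to derive a contradiction.

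Suppose $L$ is not solvable. Then the quotient $L/\mathrm{Rad}(L)$ is a non-trivial connected semisimple Lie group. By Kuranishi's theorem, it contains a non-abelian free subgroup that is dense, so in particular we can find two elements $\bar a,\bar b\in L/\mathrm{Rad}(L)$ arbitrarily close to the identity which freely generate a rank $2$ free subgroup. Using that the quotient map $L\to L/\mathrm{Rad}(L)$ is an open surjective homomorphism, I lift $\bar a,\bar b$ to elements $a,b\in L$ close to the identity. Since the exponential map of $L$ is a local diffeomorphism at the origin, I may further arrange that $a$ and $b$ lie in its image, hence each belongs to a one-parameter subgroup and is therefore divisible.

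The subgroup $\langle a,b\rangle\leq L$ is free of rank $2$: any non-trivial relation $w(a,b)=1$ would project to the relation $w(\bar a,\bar b)=1$, contradicting that $\bar a,\bar b$ freely generate a free group. Via the assumed injective homomorphism $L\hookrightarrow \IET$, the images of $a$ and $b$ generate a non-abelian free subgroup of $\IET$, and the image of $a$ (being the image of a divisible element under a homomorphism) is itself divisible and non-trivial in $\IET$.

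By item \ref{item_divis} of Corollary \ref{cor_growth}, the image of $a$ satisfies $\|a\|=0$, so by item \ref{item_vmr} it is conjugate to a virtual multi-rotation. This contradicts Theorem \ref{thm_no_MR_in_free}, which forbids non-trivial virtual multi-rotations inside a non-abelian free subgroup of $\IET$. Hence $L$ must be solvable. The only step requiring care is the lifting: one must ensure simultaneously that the lifts $a,b$ are in the image of the exponential map (so divisible) and that they still project to generators of a free subgroup in $L/\mathrm{Rad}(L)$; this follows because being close enough to the identity in the quotient can be realized by lifts arbitrarily close to the identity in $L$, which lie in any preassigned neighbourhood on which $\exp$ is a diffeomorphism.
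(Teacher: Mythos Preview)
Your proof is correct and follows essentially the same approach as the paper's: both argue by contradiction, use Kuranishi's theorem to find free generators close to the identity (hence in the image of the exponential map, hence divisible), and then combine Corollary~\ref{cor_growth}\eqref{item_divis}--\eqref{item_vmr} with Theorem~\ref{thm_no_MR_in_free}. You spell out the lifting from $L/\mathrm{Rad}(L)$ to $L$ more carefully than the paper does, but the underlying argument is the same.
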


 \begin{proof}
Assume that $L$ is not solvable and that $L$ embeds in $\IET$. 
Then $L$ contains a non-abelian free group generated
by elements $a,b$ in the image of the exponential map, hence divisible.
By Corollary \ref{cor_growth} (\ref{item_vmr}), (\ref{item_divis}), the image of $a$ and $b$ in $\IET$ are conjugate to virtual multi-rotations.
This contradicts Theorem \ref{thm_no_MR_in_free}.
 \end{proof}

\begin{prop} \label{prop;lift}

Let  $\calA$ be an abelian connected Lie group and $\calB\simeq \mathbb{R}$, or  $\calB\simeq {\Bbb S}^1$.
 Let  $L$ be a Lie group that is an extension (of Lie groups) $1 \to \calA \to L \to \calB \to 1$ in which $\calA$ is closed.

Assume that   $L \hookrightarrow \IET$. 
 Then $L$ is abelian.
\end{prop}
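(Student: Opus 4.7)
The plan is to exhibit a one-parameter subgroup $\sigma:\bbR\to L$ whose image, together with $\calA$, generates $L$, and to show that every $\sigma(s)$ centralizes $\calA$; since $\calA$ and $\sigma(\bbR)$ are both abelian, this forces $L$ to be abelian. Because $\pi:L\to\calB$ is a surjection of Lie groups, $\pi_*:\mathrm{Lie}(L)\to\mathrm{Lie}(\calB)$ is surjective; lifting a nonzero vector through $\pi_*$ and exponentiating yields $\sigma:\bbR\to L$ such that $\pi\circ\sigma$ is surjective onto $\calB$, so $L=\sigma(\bbR)\cdot\calA$.

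The main technical input is to combine Lemma \ref{lem_csa} with divisibility. Every element of a connected abelian Lie group is divisible, and every $\sigma(s)$ is divisible since it sits in a one-parameter subgroup; by Corollary \ref{cor_growth} all such elements are conjugate to virtual multi-rotations. Fix $s\in\bbR$ with $\sigma(s)$ of infinite order and $a\in\calA$ of infinite order, and choose $k,l\geq 1$ so that $\sigma(s)^k=\sigma(ks)$ and $a^l$ are each conjugate to \emph{irrational} multi-rotations. The hypothesis of Lemma \ref{lem_csa} for this pair is automatic: for every $n$ the element $\sigma(ks)^n a^l \sigma(ks)^{-n}$ lies in $\calA$ and therefore commutes with $a^l$. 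Hence $[\sigma(ks),a^l]=1$.

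The main obstacle is promoting this commutation of powers to a commutation of $\sigma(s)$ with $a$. Fix such an $a$ with its $l=l(a)$ and set $H_a:=\{t\in\bbR:\sigma(t)a^l\sigma(t)^{-1}=a^l\}$. By continuity of conjugation in the Lie group $L$, $H_a$ is a closed subgroup of $\bbR$. The previous paragraph shows that $k(s)\cdot s\in H_a$ for every $s$ in the uncountable set of real numbers where $\sigma(s)$ has infinite order. If $H_a=d\bbZ$ were discrete, this set would be contained in the countable set $d\bbQ$, a contradiction. Hence $H_a=\bbR$, so $\sigma(\bbR)$ centralizes $a^l$.

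Finally, the map $\Phi_a:\bbR\to\calA$ defined by $s\mapsto \sigma(s)a\sigma(s)^{-1}a^{-1}$ is continuous, and by the previous step satisfies $\Phi_a(s)^l=\sigma(s)a^l\sigma(s)^{-1}a^{-l}=e$, so $\Phi_a$ takes values in the $l$-torsion subgroup of $\calA$. Since $\calA\simeq\bbT^p\times\bbR^q$, its $l$-torsion subgroup is finite. A continuous map from $\bbR$ into a finite set, vanishing at $0$, is identically trivial, so $\sigma(s)$ centralizes every $a\in\calA$ of infinite order. Such elements are dense in $\calA$, and conjugation by $\sigma(s)$ is continuous, so $\sigma(\bbR)$ centralizes all of $\calA$, and $L$ is abelian.
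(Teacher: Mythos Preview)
Your proof is correct and shares the paper's core strategy: use divisibility to reduce to (irrational) multi-rotations, then invoke Lemma~\ref{lem_csa} together with the normality and abelianness of $\calA$. The difference lies in the bookkeeping. The paper never needs your promotion steps (the closed-subgroup-of-$\bbR$ argument and the torsion argument): instead of fixing $a$ and $\sigma(s)$ and then wrestling with the powers $a^l$ and $\sigma(ks)$, the paper chooses from the outset a finite set $F\subset\calA$ generating a dense subgroup and two commuting elements $Q_1,Q_2\in L$ whose images generate a dense subgroup of $\calB$, and replaces all of these by powers so that they are already conjugate to irrational multi-rotations. Lemma~\ref{lem_csa} then gives $[Q_i,T]=1$ directly for each $T\in F$, and density in $\calA$ and in $L$ finishes the argument in one stroke. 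Your route is a bit longer but has the merit of making explicit why passing to powers costs nothing, via the pleasant observation that $H_a\subset\bbR$ is closed and meets uncountably many rational lines, hence is all of $\bbR$.
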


\begin{rem}
  We don't assume that the embedding in $\IET$ is continuous. 
However, the maps occurring in the extension are continuous, and $L\to\calB$ is a surjective submersion.
\end{rem}

\begin{proof}
  We identify the elements of $L$ with their images in $\IET$.
Density arguments below take place in the Lie group.

  Write $\calA \simeq \mathbb{R}^n \times ({\Bbb S}^1)^m$, and choose   a finite subset $F\subset \calA$ 
  generating a dense subgroup.
  Note that by Corollary \ref{cor_growth} (\ref{item_divis}), every element of $\calA$ is
  conjugate  to a virtual multi-rotation.
  Replacing every element of $F$ by a power, $\grp{F}$ remains dense in $\calA$.
Thus, we can assume that elements of $F$ are conjugate to irrational multi-rotations.
 
  First we claim that there exist a pair of commuting elements $Q_1, Q_2 \in L$
  both conjugate to irrational multi-rotations,  whose image 
  generates a dense subgroup in $\calB$ (if $\calB\simeq \mathbb{S}^1$,
  we only need one element, but if  $\calB\simeq {\Bbb R} $ we
  clearly need a pair).
Indeed, consider two elements $b_1,b_2\in \calB$ of infinite order, generating a dense subgroup of $\calB$.
One can choose $b_1,b_2$ close enough to the identity so that they have preimages $Q_1,Q_2\in L$ in the image of the exponential map.
One can choose $Q_1,Q_2$ in the same one-parameter subgroup, so $Q_1$ commutes with $Q_2$.
By Corollary \ref{cor_growth} (\ref{item_vmr}), (\ref{item_divis}),  $Q_1,Q_2$ are conjugate to virtual multi-rotations.
Replacing them by powers, we can assume that $Q_1,Q_2$ are conjugate to irrational multi-rotations.

Since $\calA$ is normal and abelian, for each $T\in F$ and each $Q_i$, $Q_i^n T Q_i^{-n}$ commutes 
with $T$ for all $n>0$. 
By Lemma \ref{lem_csa}, $T$ commutes with $Q_i$.
Since $\grp{F}$ is dense in $\calA$, $Q_i$ commutes with $\calA$.
Since $Q_1$ commutes with $Q_2$, $\grp{Q_1,Q_2,\calA}$ is abelian.
 Since $\grp{Q_1,Q_2,\calA}$ is dense in $L$, $L$ is abelian.

\end{proof}

\begin{SauveCompteurs}{Lie}
  \begin{theo} 
    \label{theo;Lie}
    If $L$ is a connected Lie group, and $L \hookrightarrow \IET$ is
    an injective homomorphism, 
    then $L$ is abelian.
  \end{theo}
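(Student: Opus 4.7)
The plan is to combine Propositions \ref{prop;L_is_sol} and \ref{prop;lift} through an induction on $\dim L$, peeling off one dimension at a time from the solvable group. First, Proposition \ref{prop;L_is_sol} immediately reduces the problem to the case where $L$ is a connected \emph{solvable} Lie group embedded in $\IET$; with the abelian case being trivial, the remaining task is to promote solvability to abelianness.

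The induction is on $n = \dim L$. For $n \leq 1$ the group $L$ is $\{1\}$, $\bbR$, or $\bbS^1$, and is already abelian. Assume the statement holds for every connected Lie subgroup of $\IET$ of dimension strictly less than $n$, and suppose $\dim L = n \geq 2$. The core step is to exhibit a closed connected normal subgroup $\calA \trianglelefteq L$ with
\[
L/\calA \simeq \bbR \quad \text{or} \quad L/\calA \simeq \bbS^1.
\]
Such an $\calA$ exists because $L$ is solvable: the closure $N = \overline{[L,L]}$ is a proper closed normal connected subgroup of $L$ (since a connected solvable Lie group is not perfect), so $L/N$ is a non-trivial connected abelian Lie group, hence isomorphic to $\bbR^a \times \bbT^b$ with $a+b \geq 1$. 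Any such abelian Lie group has an obvious codimension-one closed connected subgroup (an $\bbR$- or $\bbS^1$-factor, with quotient $\bbR$ or $\bbS^1$ accordingly), and pulling it back to $L$ produces the desired $\calA$.

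Once $\calA$ is in hand, it is a closed connected Lie subgroup of $L$ of dimension $n-1$, and the embedding $L \hookrightarrow \IET$ restricts to an injective homomorphism $\calA \hookrightarrow \IET$. The inductive hypothesis thus forces $\calA$ to be abelian. We are then exactly in the setting of Proposition \ref{prop;lift} applied to the extension
\[
1 \to \calA \to L \to L/\calA \to 1,
\]
which concludes that $L$ is abelian and closes the induction.

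The only mildly delicate point is the construction of $\calA$, where one must ensure \emph{closedness} (codimension-one analytic subgroups need not be closed, as the irrational winding line in $\bbT^2$ shows); routing the construction through the abelian quotient $L/\overline{[L,L]}$ sidesteps this issue cleanly. Aside from this, the argument is a pure bookkeeping induction: all the conceptual work has already been done in Propositions \ref{prop;L_is_sol} and \ref{prop;lift}, and the present theorem just assembles them.
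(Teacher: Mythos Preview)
Your argument is correct and assembles the same two ingredients (Propositions~\ref{prop;L_is_sol} and~\ref{prop;lift}) as the paper, but organizes them differently. You run an induction on $\dim L$: at each step you produce a codimension-one closed connected normal subgroup $\calA$, which is abelian by the inductive hypothesis, and then apply Proposition~\ref{prop;lift}. The paper instead argues by contradiction, going directly to the last non-abelian term $L_{n-1}$ of the derived series; inside $\overline{L_{n-1}}$ it peels one $\bbR$- or $\bbS^1$-factor off the abelian quotient $\overline{L_{n-1}}/\overline{L_n}$, again landing in the setting of Proposition~\ref{prop;lift}. Your induction is perhaps more transparent to state; the paper's route avoids carrying an inductive hypothesis.

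One step in your argument is under-justified. You assert that $N=\overline{[L,L]}$ is proper ``since a connected solvable Lie group is not perfect'', but non-perfection only gives $[L,L]\neq L$; it does not by itself rule out $[L,L]$ being \emph{dense} in $L$ (a normal analytic subgroup can have strictly larger closure --- the very irrational-line phenomenon in $\bbT^2$ you invoke later). The conclusion $\overline{[L,L]}\subsetneq L$ is in fact true for every connected solvable Lie group of positive dimension, but it requires more than the parenthetical you give. The paper's organization sidesteps this neatly: $\overline{L_n}$ is abelian (closure of an abelian group) while $\overline{L_{n-1}}$ contains the non-abelian group $L_{n-1}$, so the two closures are visibly distinct and the quotient $\overline{L_{n-1}}/\overline{L_n}$ is automatically non-trivial with no further structure theory needed.
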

\end{SauveCompteurs}

\begin{proof}  By Proposition
  \ref{prop;L_is_sol}, $L$ is solvable. Assume it is not abelian,
  and write $\{1\}= L_{n+1} \triangleleft L_n \triangleleft L_{n-1}
  \triangleleft \dots \triangleleft L_1 =L$ where $L_{i+1} =
  [L_i,L_i]$. One can assume $n$ to be minimal, that is $L_{n-1} $
  non-abelian.
  
  By induction, we see that each $L_i$ is connected. In particular, the closures $\ol{L_n}$
  and $\ol{L_{n-1}}$ are connected Lie subgroups of $L$.
  Since $L_n$ is abelian, so is $\ol{L_n}$.
  Moreover, since $[ L_{n-1}, L_{n-1}]\subset L_n$, $[\ol{L_{n-1}},\ol{L_{n-1}}]\subset\ol L_n$.

 This means that
  $\overline{L_{n-1}}/\overline{L_{n}}$ is a connected abelian Lie group, hence isomorphic to
  $ \mathbb{R}^k \times ({\Bbb S}^1)^j$.
Hence $\ol L_{n-1}$ is an extension (of Lie groups) $1\ra \ol{L_n}\ra \ol {L_{n-1}}
\stackrel{\pi}{\ra}  \mathbb{R}^k \times ({\Bbb S}^1)^j\ra 1$.

Let us write $
\mathbb{R}^k \times ({\Bbb S}^1)^j\simeq \bigoplus_{i\leq k+j}
\calB_i$, coordinate by coordinate. 

Since  $\ol{L_{n-1}}$ itself
is non-abelian,  there exists a least  index $i_0$ such that
$\pi^{-1}( \bigoplus_{i\leq i_0}\calB_i)$ is not abelian. Note that
$\pi^{-1}(\{1\}) = \ol{L_n}$ is abelian, so that $i_0\geq 1$.

 Consider $\pi^{-1}( \bigoplus_{i\leq i_0-1}\calB_i)$ (if $i_0=1$ this
 is $\ol{L_n}$). It is abelian, by definition of $i_0$, normal and closed in  $\pi^{-1}( \bigoplus_{i\leq
  i_0}\calB_i)$, as it is the kernel of the natural projection $\pi^{-1}( \bigoplus_{i\leq
  i_0}\calB_i) \to \calB_{i_0} $.  By Proposition \ref{prop;lift},
$\pi^{-1}( \bigoplus_{i\leq i_0}\calB_i)$  does not embed in $\IET$, and neither does $L$.

\end{proof}

\section{Generic subgroups are not free}
\label{sec_generic}

The main result of this section says that under some irreducibility
assumption, a ``generic`` subgroup of $\IET$ is not free. 
This genericity is defined in topological terms
using the natural topology on $\IET$ where varying the lengths of the intervals of continuitly of an IET with fixed combinatorics describes a simplex
(see below or \cite[chap. 14.5]{HaKa_introduction} for a formal definition).
Note however that group operations are not continuous in this topology.

More precisely, 
if $I_1,\dots, I_n$ (resp. $J_1,\dots,J_n$) are the continuity intervals of $T$ (resp. $T\m$), ordered in an increasing fashion,
then the underlying permutation $\sigma$ is the permutation of $\{1,\dots,n\}$ so that $T(I_i)=J_{\sigma(i)}$.
We denote by $\IET_\sigma$ the set of all interval exchange transformations (with $n-1$ points of discontinuity)
whose underlying permutation is $\sigma$. Assigning to a transformation $T\in\IET_\sigma$ the $n$-tuple of lengths of its continuity intervals,
yields a natural bijection of $\IET_\sigma$ with the open simplex $\Delta_{n-1}$ of dimension $n-1$.
Clearly, $\sigma$ occurs as the permutation underlying some IET if and only if 
for all $i<n$, $\sigma(i+1)\neq \sigma(i)+1$, and we denote by $\frakS_n$ the set of such permutations.
Thus $\IET=\dunion_{n\geq 1} \dunion_{\sigma\in\frakS_n} IET_\sigma$ 
and the topology we consider is the one for which each subset $\IET_\sigma$ is open and the bijection above with $\Delta_{n-1}$ is a homeomorphism.

 \begin{dfn}\label{dfn_admissible}
The permutation $\sigma$ of $\{1, \dots, n\}$
 is \emph{admissible} if there is no $m \in \{1, \dots, n\}$ such that $\sigma(m)=m$  and $\sigma$ preserves $\{1,\dots,m-1\}$ 
(and therefore $\{m+1, \dots, n\}$).  
We denote by $\IET_a\subset \IET$ the set of all transformations whose underlying permutation is admissible.
 \end{dfn}

The main result of this section is Theorem \ref{theo_gnf}:
 \begin{SauveCompteurs}{gnf}
   \begin{theo}\label{theo_gnf}
There is a dense open set $\Omega_a \subset \IET \times \IET_a$   such that for all $(S,T) \in \Omega_a$,
 $\langle S,T\rangle$ is not free. 
\end{theo}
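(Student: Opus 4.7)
The plan follows the roadmap of the introduction, parallelling the proof of Theorem \ref{thm_no_MR_in_free} but with a Diophantine approximation replacing the role of the multi-rotation $R$. Given $(S, T) \in \IET \times \IET_a$, the goal is to exhibit an explicit word $w(S, T) \in \langle S, T \rangle$ equal to the identity, non-trivial in the free group $F(S, T)$, and then to check that the construction is robust (giving an open set) and widely applicable (giving density).

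\textbf{Diophantine step.} For any integer $q \geq 1$, the IET with permutation $\sigma_S$ and continuity lengths equal to rationals with common denominator $q$ has discontinuities in $\{k/q : 0 \leq k \leq q\}$, hence is periodic of some order $N = N(q, \sigma_S)$. If the length vector of $S$ lies within $\eta$ of such a rational vector, the iterate $S^N$ is a piecewise translation of amplitude $O(N\eta)$ on each piece (possibly with many pieces, but all with small amplitude). For $\eta$ small compared to a chosen $\eps > 0$, all translation amplitudes of $S^N$ are $\leq \eps/2$. This condition on $S$ is open, and dense in $\IET_{\sigma_S}$ as $q$ varies.

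\textbf{Small support and disjoint conjugate.} A direct adaptation of Lemmas \ref{lem_small}, \ref{lem;commut1}, and \ref{lem;commut1.5} (whose proofs only use that $S^N$ is a small translation on the relevant pieces, not that it is a rotation) yields that the element $U := [[T, S^N], S^N]$ has support contained in the $O(\eps)$-neighborhood $X_{c\eps}$ of the finite set $X := \{0\} \cup \Delta(T) \cup \Delta(T^{-1}) \cup \Delta(S^N) \cup \Delta(S^{-N})$, for an absolute constant $c$. The next step is to find $g \in \langle S, T \rangle$, realized by an explicit short word in $S$ and $T$, with $g(X_{c\eps}) \cap X_{c\eps} = \es$. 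Here the admissibility of $\sigma_T$ enters: it prevents $T$ from preserving a proper initial segment of $[0, 1)$, so that the $T$-orbit, possibly combined with $S$-dynamics, can move $X$ off itself. Concretely, as in Lemma \ref{lem_disj}, a product such as $g = T^{p'} S^p$ for suitable integers $p, p' \geq 1$ is expected to work for $\eps$ small enough.

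\textbf{Relation, non-triviality, topology.} With disjoint supports, $U$ and $gUg^{-1}$ commute in $\IET$, giving the relation $[U, gUg^{-1}] = 1$. The argument of Theorem \ref{thm_no_MR_in_free} shows that this word is non-trivial in $F(S, T)$: each commutator involved is non-trivial in the free group, and by the cyclic-centralizer property of free groups (\cite[Prop. I.2.19]{LyndonSchupp}), $U$ cannot commute with a conjugate $gUg^{-1}$ unless $g$ lies in the cyclic subgroup generated by a root of $U$, which is not the case for our explicit $g$. Therefore $\langle S, T \rangle$ is not free. The Diophantine condition, the amplitude bound, and the disjointness $g(X_{c\eps}) \cap X_{c\eps} = \es$ are all open conditions on $(S, T)$, and yield an open neighborhood of $(S, T)$ on which the relation persists; density follows from the density of the Diophantine condition as $q$ varies. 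The principal technical obstacle is the disjoint-conjugate step: turning the admissibility of $\sigma_T$ into an explicit word $g$, together with a uniform $\eps$, for which disjointness holds on an open set -- this is the combinatorial heart of the argument.
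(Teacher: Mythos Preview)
Your overall architecture is right, but the ``disjoint conjugate'' step hides the main idea of the proof, and what you have written there does not work.

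First, a structural point: the paper approximates \emph{both} $S$ and $T$ by $q$-rational transformations $S_0,T_0$, not only $S$. This matters because then the support of $U=[S^{q!},TS^{q!}T^{-1}]$ (the paper's choice, slightly different from your $U$) lies in the $\eps$-neighbourhood of the \emph{periodic} set $\frac{1}{q}\bbZ$, rather than of an unstructured finite set like your $X$. In your set-up, $\Delta(T)$ has no reason to be near $\frac{1}{q}\bbZ$, and $\Delta(S^N)$ may have cardinality of order $N\cdot\Disc(S)$; you note this but do not use the fact that these many points are all near $\frac{1}{q}\bbZ$. Without that periodic structure, the disjointness step becomes intractable.

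The real gap is the disjointness step itself. You invoke admissibility as ``$T$ preserves no proper initial segment'' and then hope that a word like $T^{p'}S^p$ will push $X_\eps$ off itself. But this is not how admissibility is used, and there is no reason it would work: $T$ is an arbitrary IET, not a rotation, and Lemma~\ref{lem_disj} relies essentially on the minimality of irrational rotations. The paper's mechanism is completely different. One does not take $T$ in a full neighbourhood of $T_0$, but rather near a perturbation $T_\theta$ of $T_0$ obtained by modifying the lengths of the continuity intervals along a carefully chosen \emph{drifting direction} $\dl$. The defining property of $\dl$ is that the induced change in translation lengths, $\Phi_\sigma(\dl)$, has all coordinates strictly positive. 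Consequently, every translation length of $T_\theta$ (and of any nearby $T$) lies in an interval $[\theta\, dr_{\min},\theta\, dr_{\max}]$ modulo $\frac{1}{q}$. So modulo $\frac{1}{q}$, $T$ behaves like a small positive rotation, and a suitable power $T^k$ moves $\calN_\eps(\frac{1}{q}\bbZ)$ off itself. Admissibility enters precisely here: it is exactly the condition under which such a drifting direction $\dl$ exists (this is the content of Proposition~\ref{prop;admis_is_moveable}). As a consequence, the open set $\calV$ produced is not a neighbourhood of $T_0$ but only accumulates on it (it is a union of neighbourhoods of the $T_\theta$), which is enough for density.

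In short: you are missing the drift construction, which is the combinatorial heart you correctly identified as the obstacle. Without it, there is no way to convert admissibility into the required disjointness.
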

 \end{SauveCompteurs}

\begin{remark}
 In the statement, $S$ is not required to be in $\IET_a$.
\end{remark}

\subsection{Explanation in a simpler situation}
\label{sec_circle}
Before going into more technical arguments, we sketch a proof of the analogous result in a simpler situation.
Here, we consider interval exchange transformations on the circle $\bbR/\bbZ$.
Denote by $R_\theta$ the rotation $x\mapsto x+\theta$ on $\bbR/Z$. 
There is a natural topology on $\IET(\bbR/\bbZ)$ such that $R_\theta\circ T$ varies continuously with $\theta$.
This possibility to \emph{drift} $T$ will be important below.
For instance, given a permutation $\sigma$ of $\{1,\dots,n\}$ 
there is a natural set of IETs, 
parametrized by $\bbR/\bbZ\times \bbR/\bbZ\times \Delta_{n-1}$.
The two factors $\bbR/\bbZ$ indicate the initial points of $I_1$ and $J_1$,
and the simplex $\Delta_{n-1}$ defines the lengths of the intervals (note that in this setting, the permutation $\sigma$
is not uniquely defined in terms of $T$, but only its double coset modulo the cycle $(1,\dots, n)$).

Consider $S_0,T_0$ two IETs on $\bbR/\bbZ$ whose points of discontinuity 
are in the finite set $D_q=(\frac{1}{q}\bbZ)/\bbZ\subset \bbR/\bbZ$, and whose translation lengths are in $D_q$. 
Then $D_q$ is invariant under $S_0$, and $S_0^{q!}$ is the identity. 
If $S$ is a small perturbation of $S_0$, then $S^{q!}$ agrees with a very small translation
on each interval that is contained in the complement of a small neighbourhood of $D_q$.
Now consider $T$ a small perturbation of $T_0$.
Since two translations commute, this implies that the support of the commutator 
$[S^{q!},TS^{q!}T\m]$ is contained in a small neighbourhood
of $D_q$, say $N_\eps(D_q)$ (Lemma \ref{lem;commut1.5}).

Given $\theta>0$, define $T_\theta=R_\theta \circ T_0$, and consider $T$ a small perturbation of $T_\theta$.
If $\theta$ is small enough, and if $T$ is close enough to $T_\theta$, 
the discussion above applies in particular to $T$.
Note that all translation lengths of $T_\theta$ are equal to $\theta$ modulo $\frac{1}{q}\bbZ$.
We view $\theta$ as a drift since modulo $\frac{1}{q}\bbZ$, $R_\theta\circ T_0$ moves points uniformly by $\theta$ to the right.
Because of this drift, some power of $R_\theta\circ T_0$ sends $N_\eps(D_q)$ disjoint from itself.
If we perturb $T_0$ into a transformation $T$, all translation lengths of $T$ are very close to $\theta$ modulo $\frac{1}{q}\bbZ$
so $T$ still features this drift.
Thus, if $T$ is close enough to $R_\theta\circ T_0$, some power of $T$ will also send $N_\eps(D_q)$ disjoint from itself.
This means that for some $k\neq 0$, the support of $[S^{q!},TS^{q!}T\m]$ will be disjoint from its image under $T^k$,
so $[S^{q!},TS^{q!}T\m]$ commutes with its conjugate by $T^k$.
This prevents $\grp{S,T}$ from being free.

In what follows, we run the same strategy, except that the drift is not given by composition by a rotation,
but by a suitably designed modification of the IET $T_0$.

\subsection{Linear maps}

Now we come back to $\IET([0,1))$.
Given $\sigma\in\frakS_n$, one has a homeomorphism $\lambda:IET_\sigma\ra \Delta_{n-1}$ assigning
to an IET $T$ the tuple $\lambda(T)=(l_1,\dots,l_n)$ of lengths of its intervals of continuity.
This allows to define the $\ell^1$-metric on $\IET_\sigma$ by $d(T,T')=||\lambda(T)-\lambda(T')||_1=\sum_{i=1}^n |l_i-l'_i|$.
We extend this definition to $\IET$ by saying that two transformations with distinct underlying permutations 
are at infinite distance.

The tuple $\beta(T)=(b_1<\dots <b_{n-1})$ of points of discontinuity of $T$ 
is related to $(l_1,\dots,l_n)$ 
by the formulae
$l_i = b_i-b_{i-1}$ (with the obvious conventions $b_0=0, b_n=1$), 
and $b_i = \sum_{j\leq i} l_i$. 
This implies that for all $i$, $|b_i-b'_i|\leq d(T,T')$.

We will also be interested in the tuple of translation lengths of an IET:
given a transformation $T \in \IET_\sigma$, 
define $\phi(T)=(t_1,\dots,t_n)$ where $t_i$ is the translation length of $T$ in restriction to its $i$-th continuity interval.
   The following is immediate.

     \begin{lemma}\label{lem;linearity_Phi}
       The map $\phi\circ \lambda\m:(l_1,\dots,l_n)\mapsto (t_1,\dots,t_n)$ is linear.
 Explicitly:
     $$\begin{array}{ccc}
       t_i      & = &  -\sum_{j=1}^{i-1} l_j + \sum_{j=1}^{\sigma(i)-1} l_{\sigma^{-1}(j)}
   \end{array} $$
In particular, for all $i\in\{1,\dots,n\}$, $|t_i-t'_i|\leq 2 d(T,T')$.
     \end{lemma}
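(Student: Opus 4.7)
The plan is a direct bookkeeping computation. First I would write down explicit formulas for the endpoints of the continuity intervals of $T$ and for those of $T\m$ in terms of $(l_1,\dots,l_n)$. Since $I_i=[b_{i-1},b_i)$ is the $i$-th interval read from left to right, its left endpoint is $b_{i-1}=\sum_{j<i} l_j$. Similarly, $J_k$ is the $k$-th interval of $T\m$ read from left to right; since $T(I_{\sigma\m(k)})=J_k$, the interval $J_k$ has length $l_{\sigma\m(k)}$, and its left endpoint is $\sum_{j<k} l_{\sigma\m(j)}$.

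Next I would use that $T$ restricted to $I_i$ is a translation sending $I_i$ onto $J_{\sigma(i)}$: the translation length $t_i$ is the difference between the left endpoint of $J_{\sigma(i)}$ and the left endpoint of $I_i$. Substituting the formulas from the previous step yields exactly
\[
t_i \;=\; \sum_{j=1}^{\sigma(i)-1} l_{\sigma\m(j)} \;-\; \sum_{j=1}^{i-1} l_j,
\]
which is visibly linear in $(l_1,\dots,l_n)$, so the first assertion follows.

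For the Lipschitz bound, given $T,T'\in \IET_\sigma$ with length vectors $(l_j)$ and $(l'_j)$, I would apply the formula to both and use the triangle inequality termwise:
\[
|t_i-t'_i| \;\leq\; \sum_{j=1}^{i-1} |l_j-l'_j| \;+\; \sum_{j=1}^{\sigma(i)-1} |l_{\sigma\m(j)}-l'_{\sigma\m(j)}|.
\]
Each of the two sums is bounded by $\|\lambda(T)-\lambda(T')\|_1=d(T,T')$ since the indices appearing are distinct and taken from $\{1,\dots,n\}$, giving the desired bound $2\,d(T,T')$. When $T$ and $T'$ have different underlying permutations, $d(T,T')=\infty$ by convention and the inequality is trivial.

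I do not expect any obstacle: the only care needed is consistent bookkeeping of which index runs over continuity intervals of $T$ versus those of $T\m$, and keeping track that $J_k$ has length $l_{\sigma\m(k)}$ (not $l_k$).
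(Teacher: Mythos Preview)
Your argument is correct and is exactly the direct bookkeeping computation the paper has in mind; the paper simply declares the lemma ``immediate'' without writing out the details you have supplied.
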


\subsection{$q$-rationality, and small support}

  A transformation $S_0\in \IET([0,1))$ is called \emph{$q$-rational}, for
  $q\in \mathbb{N}$,  if all
  its discontinuity points are in $\frac1q \mathbb{N}$. Clearly, in
  such case, all its translation lengths are in $\frac{1}{q}\bbZ$, and $S_0^{q!} = \id$.
  We also note that a $q$-rational transformation has at most 
$q$  continuity intervals.

  \begin{lemma} For all $\epsilon >0, q\in \mathbb{N}, m\in \mathbb{N}$, there exists
    $\eta>0$ such that, if $S_0$ and $T_0$ are $q$-rational, if $w(s,t)$ is a
    word of length $\leq m$ in the letters $s^{\pm 1}, t^{\pm 1}$ such that $w(S_0,T_0)
    = \id$, and if $S$
    and $T$ are $\eta$-close to $S_0$ and $T_0$ respectively, then on each interval of $[0,1)\setminus
    \calN_\epsilon (\frac1q \mathbb{N})$, the transformation $w(S,T)$  induces a translation of length $<\epsilon$.      
  \end{lemma}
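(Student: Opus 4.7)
The plan is to compare, for $x \in [0,1) \setminus \calN_\epsilon(\frac1q\bbN)$, the orbit of $x$ under the perturbed word $w(S,T)$ with its orbit under $w(S_0,T_0)=\id$. Writing $w = g_m \cdots g_1$ with each $g_j \in \{s^{\pm 1},t^{\pm 1}\}$, I define the reference and perturbed orbits
\[
x_j = g_j(S_0,T_0)\circ\cdots\circ g_1(S_0,T_0)(x), \qquad y_j = g_j(S,T)\circ\cdots\circ g_1(S,T)(x).
\]
Since the discontinuities of $S_0^{\pm 1}$ and $T_0^{\pm 1}$ all lie in $\frac1q\bbN$ and each translation length lies in $\frac1q\bbZ$, the reference orbit satisfies $x_j \in x + \frac1q\bbZ \pmod 1$, so $d(x_j,\frac1q\bbN) = d(x,\frac1q\bbN) \geq \epsilon$ for every $j$; moreover $x_m = x$ because $w(S_0,T_0) = \id$.

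Two observations will drive the induction. First, if $S$ is $\eta$-close to $S_0$, then the discontinuities of $S$ lie within $\eta$ of $\frac1q\bbN$ (via $|b_i-b'_i|\leq d(S,S_0)$), and the translation length of $S$ on its $k$-th continuity interval differs from that of $S_0$ by at most $2\eta$ (Lemma~\ref{lem;linearity_Phi}); the analogous statements hold for $T$, $S^{-1}$, $T^{-1}$. Second, any point $z$ with $d(z,\frac1q\bbN) > \eta$ lies in the $k$-th continuity interval of $S$ if and only if it lies in the $k$-th continuity interval of $S_0$, since the corresponding boundary points match within $\eta$. Now choose $\eta < \epsilon/(2m+1)$. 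An induction on $j$ shows $|y_j - x_j| \leq 2\eta j$: assuming this at step $j$, combined with $d(x_j,\frac1q\bbN)\geq\epsilon$, we get $d(y_j,\frac1q\bbN) > \eta$; the two observations then place $y_j$ and $x_j$ in the same indexed continuity interval of $g_{j+1}$ and yield $y_{j+1} - x_{j+1} = (y_j - x_j) + (t'_k - t_k)$ with $|t'_k - t_k| \leq 2\eta$. At $j=m$, using $x_m = x$, this gives $|y_m - x| < 2\eta m < \epsilon$.

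The pointwise estimate upgrades to the desired conclusion because the argument simultaneously proves continuity: for $x$ ranging over a fixed component $J$ of $[0,1) \setminus \calN_\epsilon(\frac1q\bbN)$, every $y_j$ stays bounded away from the discontinuity set of $g_{j+1}(S,T)$ by a definite margin, so the sequence of continuity intervals visited by the orbit depends only on the combinatorial data and is therefore constant on $J$. Consequently $w(S,T)$ restricts to a single translation on $J$, of length strictly less than $\epsilon$. The only real technical point is the bookkeeping that matches the $k$-th continuity interval of $S$ with the $k$-th continuity interval of $S_0$; this is handed to us by the fact that boundary points perturb by at most $\eta$ while the orbit is kept $>\eta$ away from $\frac1q\bbN$, so there is no ambiguity in the match-up and the linearity of $\lambda\mapsto \phi(T)$ makes the accumulation of errors a routine induction.
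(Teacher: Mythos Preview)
Your proof is correct and follows essentially the same approach as the paper's: choose $\eta$ on the order of $\epsilon/m$, use that discontinuity points perturb by at most $\eta$ and translation lengths by at most $2\eta$ (Lemma~\ref{lem;linearity_Phi}), and run an induction accumulating $2\eta$ of error per letter while the orbit stays clear of $\frac1q\bbN$. The only cosmetic difference is that the paper phrases the induction in terms of shrinking subintervals $(a+2k\eta,b-2k\eta)$ of a component of $[0,1)\setminus\frac1q\bbN$, whereas you track the reference and perturbed orbits of a single point; these are equivalent bookkeeping devices for the same argument.
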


\begin{proof}
Let $\epsilon >0$, and choose $\eta < \frac{\epsilon}{2m} $.
Let $I=(a,b)$ be any connected component of $[0,1) \setminus \frac1q \mathbb{N}$. 
Points of discontinuity of $S$ (resp. $T$) are within $\eta$ from those of $S_0$ (resp. $T_0$).
Moreover, in restriction to $(a+2\eta, b-2\eta)$, $S$ and $S_0$ (resp. $T$ and $T_0$) are translations of amplitude differing by at most $2\eta$ 
by Lemma \ref{lem;linearity_Phi}. 

It follows that, if $k\geq 1$, the endpoints of $S( (a+2k\eta, b-2k\eta))$ avoid the $2(k-1)\eta$-neighborhood of  
$\frac1q \mathbb{N}$ (and similarily for $T$). This is enough to ensure, by induction,  that, on each component of  $[0,1)\setminus
    \calN_{2m\eta} (\frac1q \mathbb{N})$ the transformation $w(S,T)$ is a translation, and its amplitude differs from 
that of $w(S_0,T_0)$ of at most $2m\eta$. Since  $w(S_0,T_0)$ is the identity, the result follows.
\end{proof}

  \begin{lemma}\label{lem;small_supp_comm}
     For all $\epsilon >0, q\in \mathbb{N}$, there exists
    $\eta>0$ such that, if $S_0$ and $T_0$ are $q$-rational, and if $S$
    and $T$ are $\eta$-close to $S_0$ and $T_0$ respectively,  then on each interval of $[0,1)\setminus
    \calN_\epsilon (\frac1q \mathbb{N})$, the transformation
    $[S^{q!},TS^{q!}T^{-1}]$ induces the identity.
  \end{lemma}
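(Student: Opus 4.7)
The plan is to deduce this directly from the preceding lemma combined with Lemma~\ref{lem;commut1.5}. The key observation is that both $S^{q!}$ and $TS^{q!}T^{-1}$ are evaluations at $(S,T)$ of words $w_1(s,t)=s^{q!}$ and $w_2(s,t)=ts^{q!}t^{-1}$ that send $(S_0,T_0)$ to the identity (since $S_0$ is $q$-rational, so $S_0^{q!}=\id$ and consequently $T_0 S_0^{q!}T_0^{-1}=\id$). Thus the preceding lemma will force each of $S^{q!}$ and $TS^{q!}T^{-1}$ to behave like a very small translation away from $\frac{1}{q}\mathbb{N}$, and Lemma~\ref{lem;commut1.5} will then make their commutator act trivially on a slightly shrunken set.

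First I would set $\epsilon' = \epsilon/2$ and apply the preceding lemma with parameters $\epsilon'$, $q$, and $m = q!+2$ to obtain a constant $\eta>0$. This constant is chosen to work simultaneously for both words $w_1$ and $w_2$, which have lengths bounded by $m$. Take any $S,T$ that are $\eta$-close to $S_0,T_0$. Since $w_1(S_0,T_0)=w_2(S_0,T_0)=\id$, the preceding lemma guarantees that on every connected component $I$ of $[0,1)\setminus \calN_{\epsilon'}(\frac{1}{q}\mathbb{N})$, both $S^{q!}$ and $TS^{q!}T^{-1}$ restrict to continuous translations of amplitude $<\epsilon'$.

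Next I would apply Lemma~\ref{lem;commut1.5} to $g=S^{q!}$ and $h=TS^{q!}T^{-1}$, with $E=[0,1)\setminus \calN_{\epsilon'}(\frac{1}{q}\mathbb{N})$ and the constant $\epsilon'$ in the role of $\epsilon$. It gives that $[S^{q!},TS^{q!}T^{-1}]$ is the identity on $\int_{\epsilon'}(E)$. It remains to notice that
\[
\int_{\epsilon'}\!\bigl([0,1)\setminus \calN_{\epsilon'}({\textstyle\frac{1}{q}}\mathbb{N})\bigr) \;\supseteq\; [0,1)\setminus \calN_{2\epsilon'}({\textstyle\frac{1}{q}}\mathbb{N}) \;=\; [0,1)\setminus \calN_{\epsilon}({\textstyle\frac{1}{q}}\mathbb{N}),
\]
since removing an $\epsilon'$-neighbourhood from $\frac{1}{q}\mathbb{N}$ leaves a union of intervals whose $\epsilon'$-interior is precisely the complement of the $2\epsilon'$-neighbourhood of $\frac{1}{q}\mathbb{N}$.

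I do not expect a significant obstacle here: the argument is a clean bookkeeping reduction of the commutator to two applications of the preceding lemma and one application of Lemma~\ref{lem;commut1.5}. The only mild care needed is ensuring the constants $\epsilon'$ and $\eta$ are chosen in the right order, so that the neighbourhood shrinkage coming from the "interior" operation is absorbed in passing from $\epsilon/2$ back to $\epsilon$, and that the word length used to invoke the preceding lemma (here $m=q!+2$) is large enough to cover both $w_1$ and $w_2$ simultaneously.
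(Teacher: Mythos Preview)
Your proof is correct and follows essentially the same approach as the paper: apply the preceding lemma to the words $s^{q!}$ and $ts^{q!}t^{-1}$ (the paper uses $\epsilon/4$ where you use $\epsilon/2$, but either choice works), then invoke Lemma~\ref{lem;commut1.5} and absorb the shrinkage from the interior operation. Your write-up is just a more detailed version of the paper's two-line argument.
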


  \begin{proof}
    Apply the previous lemma to the words  $s^{q!}$ and $ts^{q!}t^{-1}$ for $\epsilon/4$.
Then apply Lemma \ref{lem;commut1.5}. 
  \end{proof}

   \subsection{Drifting the support}

     \subsubsection{Drift vector}\label{subsubsec_drift}

In this section, we introduce the notion of \emph{drift vector}, a substitute to the composition by a rotation that can be used
over a circle.

     Fix a permutation $\sigma$, and consider the linear map $\Phi_\sigma=\phi\circ\lambda\m$
that assigns to tuple of lengths $(l_i)_{i=1,\dots,n} $ the tuple of translation lengths $(t_i)_{i=1,\dots,n} $
  (see Lemma \ref{lem;linearity_Phi}).
     \newcommand{\dr}{\overrightarrow{dr}}
     \newcommand{\dl}{\overrightarrow{dl}}
     
     \begin{defi}\label{defi;moveable}
       We say that a permutation $\sigma\in\frakS_n$ is \emph{driftable}  if there exists
       a vector $\dl=(dl_1,\dots,dl_n)\in \bbR^n$ with $\sum  dl_i =0$ such that the vector  $\dr=\Phi_\sigma(\dl)\in\bbR^n$
       has only positive (non-zero) coordinates.

       When it exists, we call $\dr$ a \emph{drift vector}, and $\dl$ a \emph{drifting direction}.
     \end{defi}
     
     The vector $\dr=\Phi_\sigma(\dl)$  represents the change of translation lengths induced by the change
     of lengths of intervals $\dl$. The point of a drift vector is therefore that all translation lengths increase
     when we change the lengths of the intervals by a positive multiple of $\dl$.
     In Proposition \ref{prop;admis_is_moveable}, we will see that $\sigma$ is driftable if and only if it is admissible.

     In all the rest of this section, we assume that the permutation $\tau$ is driftable, and we fix $\dl$ and $\dr$ as above.

     Consider $T_0\in\IET_\tau$ a given $q$-rational transformation.
    Given $\theta>0$, define $T_\theta\in IET_\tau$ in terms of lengths of its continuity intervals
     by $\lambda(T_\theta)=\lambda(T_0)+\theta\dl$. 
       
\newcommand{\drmax}{dr_{\mathrm{max}}}
\newcommand{\drmin}{dr_{\mathrm{min}}}

     \begin{defi}
       We denote by  $\drmax=\max_{i=1}^n dr_i$ and  $\drmin=\min_{i=1}^n dr_i$ the \emph{maximal} and \emph{minimal drift} of $\dr$.
     \end{defi}

     \begin{lemma}\label{lem;precise_drift}
       All translation lengths of $T_\theta$ are in $[\theta \drmin,\theta \drmax]$ mod $\frac1q$.

       Moreover, if $||\lambda(T)-\lambda(T_\theta)||_1\leq \mu$,
then all
       translation lengths of $T$ are in $[\theta \drmin-2\mu, \theta \drmax+2\mu]$ mod $\frac1q$.
     \end{lemma}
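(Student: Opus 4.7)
The plan is to exploit linearity of the length-to-translation map $\Phi_\tau=\phi\circ\lambda^{-1}$ established in Lemma \ref{lem;linearity_Phi}, together with the $q$-rationality of $T_0$, to reduce everything to elementary coordinatewise estimates modulo $\frac{1}{q}$.

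First I would handle the statement about $T_\theta$ itself. By the definition of $T_\theta$ we have $\lambda(T_\theta)=\lambda(T_0)+\theta\dl$, and applying the linear map $\Phi_\tau$ gives
\[
\phi(T_\theta)=\Phi_\tau(\lambda(T_0))+\theta\,\Phi_\tau(\dl)=\phi(T_0)+\theta\,\dr.
\]
Since $T_0$ is $q$-rational, all of its translation lengths lie in $\frac{1}{q}\bbZ$, so every coordinate of $\phi(T_0)$ is $0$ modulo $\frac{1}{q}$. Hence the $i$-th translation length of $T_\theta$ equals $\theta\,dr_i$ modulo $\frac{1}{q}$, which lies in $[\theta\drmin,\theta\drmax]$ by definition of $\drmin$ and $\drmax$.

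For the second claim, note that the hypothesis $\|\lambda(T)-\lambda(T_\theta)\|_1\leq\mu$ is only non-vacuous when $T\in\IET_\tau$, since distances between transformations with different underlying permutations are infinite. Assuming therefore that $T$ and $T_\theta$ share the permutation $\tau$, the last assertion of Lemma \ref{lem;linearity_Phi} (applied to $T$ and $T_\theta$) gives $|t_i(T)-t_i(T_\theta)|\leq 2\mu$ for every $i$. Combining with the first part, each translation length of $T$ is congruent modulo $\frac{1}{q}$ to some number in $[\theta\drmin-2\mu,\theta\drmax+2\mu]$, which is exactly the required conclusion.

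There is essentially no obstacle here: the lemma is a direct bookkeeping statement, and the only thing to be careful about is that the $q$-rationality of $T_0$ is used exclusively to kill $\phi(T_0)$ modulo $\frac{1}{q}$, and that the $\ell^1$-to-$\ell^\infty$ comparison on translation lengths already appears in Lemma \ref{lem;linearity_Phi} with the factor $2$.
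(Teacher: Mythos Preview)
Your proof is correct and follows the same approach as the paper: both use linearity of $\Phi_\tau$ together with the $q$-rationality of $T_0$ for the first assertion, and then bound the perturbation of translation lengths by $2\mu$ for the second. The only cosmetic difference is that you invoke the ready-made bound $|t_i-t'_i|\leq 2d(T,T')$ from Lemma~\ref{lem;linearity_Phi}, whereas the paper re-derives that same inequality from the explicit formula for $\Phi_\tau$.
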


     \begin{proof}
       All translation lengths of $T_0$ are $0$ mod
       $\frac1q$. By linearity of $\Phi_\tau$ (as defined at the beginning of Section \ref{subsubsec_drift}, the translation vector
       of $T_\theta$ is  $\Phi_\tau(\lambda(T_0)+\theta\dl) =
       \Phi_\tau(\lambda(T_0))+\theta\Phi_\tau(\dl)=\phi(T_0)+\theta \dr$. Therefore, the translation lengths of
       $T_\theta$ are the coordinates of $\theta\dr$ mod $\frac1q$. The first assertion
       follows.

       For the second statement, express $\lambda(T)$ as $\lambda(T) = \lambda(T_0) +
       \theta\dl +\vec\epsilon$, where $||\vec \epsilon||_1 \leq \mu$.
       By linearity, $\phi(T)=\phi(T_0)+\theta\dr+\Phi_\tau(\vec\eps)$, 
       which, mod $\frac1q$, gives $\theta\dr+\Phi_\tau(\vec\eps)$.
       Now, writing $\vec\eps=(\eps_i)_{i=1}^n$, we have by Lemma \ref{lem;linearity_Phi},
       $$\Phi_\tau (\eps) = \left( -  \sum_{j=1}^{i-1} \epsilon_j + \sum_{j=1}^{\tau(i)-1} \epsilon_{\tau^{-1}(j)} \right)_{i=1,\dots,n}.$$ 
      Since  $|| \vec\epsilon||_1 \leq \mu$, every coordinate of this vector is at most $2\mu$.
     \end{proof}

     \subsubsection{Generic absence of free groups}

     \begin{prop}\label{prop;small_open_sets}
       Assume that $S_0,T_0$ are $q$-rational, and that the permutation $\tau$ underlying $T_0$ is driftable.

       Then, there exist a neighbourhood $\calU$ of $S_0$ 
and an open set $\calV$
       that accumulates on $T_0$, 
such that whenever $(S,T)\in \calU\times \calV$, 
       $\grp{S,T}$ is not  free of rank $2$.
     \end{prop}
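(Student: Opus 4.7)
The plan is to exhibit, inside $\grp{S,T}$ for every $(S,T)$ in a suitable product neighbourhood, a non-trivial commutation relation, following the strategy sketched in Section \ref{sec_circle}. The element that produces the relation is
\[
U(s,t) := [s^{q!},\, t s^{q!} t^{-1}].
\]
The argument decomposes into three steps: produce an element of small support, use the drift to find a power of $T$ whose conjugation moves that support off itself, and check that the resulting relation is non-trivial in the rank-$2$ free group.

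First, I would fix $\epsilon>0$ small compared with $1/q$ and apply Lemma \ref{lem;small_supp_comm} to extract a constant $\eta_0>0$ such that whenever $d(S,S_0)<\eta_0$ and $d(T,T_0)<\eta_0$, the element $U(S,T)$ acts as the identity off $\calN_\epsilon(\tfrac{1}{q}\bbN)$. I would then take $\calU$ to be the $\eta_0$-ball around $S_0$. Using the drift direction $\dl$ from Section \ref{subsubsec_drift}, I would pick $\theta>0$ small enough that $T_\theta$ sits in the $\eta_0/2$-ball around $T_0$ (so that any $T$ in the $\eta_0/2$-ball around $T_\theta$ is automatically $\eta_0$-close to $T_0$), and define $\calV$ to be an open ball of radius $\mu$ around $T_\theta$, with $\mu < \min(\eta_0/2,\, \theta\,\drmin/2)$. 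Since $\theta$ can be taken arbitrarily small, $\calV$ accumulates on $T_0$. For $(S,T)\in\calU\times\calV$, Lemma \ref{lem;small_supp_comm} forces $\supp(U(S,T))\subset \calN_\epsilon(\tfrac{1}{q}\bbN)$.

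Second, I would exploit Lemma \ref{lem;precise_drift}: for every $T\in\calV$, each translation length of $T$ lies in an interval $[\alpha,\beta] \pmod{1/q}$ with $\alpha = \theta\,\drmin - 2\mu$ and $\beta = \theta\,\drmax + 2\mu$, and by the choice of $\mu$ we have $0<\alpha\leq\beta<1/q$. On each continuity piece of $T^k$, the map $T^k$ is a single translation whose amount, reduced mod $1/q$, lies in $[k\alpha, k\beta]$. I would then pick a positive integer $k$ so that the whole window $[k\alpha, k\beta]$ is contained in the ``safe band'' $(2\epsilon,\, 1/q-2\epsilon)$; such a $k$ (of order $2\epsilon/(\theta\,\drmin)$) exists provided $\epsilon$ was initially chosen so that the resulting spread $k(\beta-\alpha)\sim 2\epsilon(\drmax-\drmin)/\drmin$ stays below $1/q-4\epsilon$. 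For this $k$, the image $T^k(\calN_\epsilon(\tfrac{1}{q}\bbN))$ is disjoint from $\calN_\epsilon(\tfrac{1}{q}\bbN)$, because the class mod $1/q$ of every translation amount of $T^k$ avoids $(-\epsilon,\epsilon)$. Hence $U(S,T)$ and $T^k U(S,T) T^{-k}$ have disjoint supports and therefore commute:
\[
w(S,T) := \bigl[\, U(S,T),\; T^k U(S,T) T^{-k}\,\bigr] = \id.
\]

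Finally, I would check that $w(s,t)\neq 1$ in the free group $F(s,t)$, by the same argument as at the end of the proof of Theorem \ref{thm_no_MR_in_free}: the reduced form of $U(s,t)$ has no cancellation, so $U(s,t)\neq 1$ and is not a proper power, and $U(s,t)$ is not a power of $t$; applying \cite[Prop.~I.2.19]{LyndonSchupp} on normalizers of cyclic subgroups of free groups yields $[U,\, t^k U t^{-k}]\neq 1$ for every $k\neq 0$, so the relation is non-trivial and $\grp{S,T}$ cannot be free of rank $2$. The main delicacy of the proof is the interlocking choice of the constants $\epsilon, \eta_0, \theta, \mu, k$: one must first shrink $\epsilon$ so that the safe band comfortably exceeds the intrinsic spread of the drift window after $k$ iterations, then pull $\eta_0$ from Lemma \ref{lem;small_supp_comm}, then choose $\theta$ and $\mu$ so that both the small-support property and the positivity and narrowness of $[\alpha,\beta]$ hold simultaneously. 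Once these choices are made consistently, the three steps mesh and the proposition follows.
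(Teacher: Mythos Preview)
Your proposal is correct and follows essentially the same route as the paper: the same commutator $U=[S^{q!},TS^{q!}T^{-1}]$, the same appeal to Lemma~\ref{lem;small_supp_comm} for small support, the same use of Lemma~\ref{lem;precise_drift} to find a power $T^k$ whose translation lengths mod $1/q$ fall in a safe band, and the same non-triviality check via \cite[Prop.~I.2.19]{LyndonSchupp}. The only cosmetic differences are that the paper pins down the constants explicitly (taking $\epsilon<\min(\tfrac{1}{100\rho q},\tfrac{1}{10q})$ with $\rho=\drmax/\drmin$, and $\mu<\theta\,\drmin/4$) and defines $\calV$ as the union $\bigcup_\theta \calV_\theta$ rather than a single ball; also note that ``no cancellation'' does not by itself imply ``not a proper power,'' but what you actually need---and state---is that $U$ is not a power of $t$, which suffices.
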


     \begin{rem}
      We don't claim that $\calV$ is a neighbourhood of $T_0$, only that $T_0\in\ol \calV$.
     \end{rem}

  \begin{proof} 
    Let $\dl,\dr$ be a drifting direction and a drifting vector as above,
    and $\rho=\frac{\drmax}{\drmin}$.
     Let $\epsilon <  \min(\frac{1}{100\rho},\frac{1}{10q})$, and let $\eta$ be given by Lemma
     \ref{lem;small_supp_comm}. 
Let $\calU$ be the set of transformations $S$ at distance $\eta$ from $S_0$.

     Now consider $\theta>0$  such that: $\theta \drmax\leq \eta/10$, $\theta ||\dl||_1\leq \eta/10$,
     and $\theta \drmin<\eps$.
     The transformation $T_\theta$ defined above is then at distance $\leq \eta/10$ from $T_0$.
     Finally, let $\mu <\frac{\theta\drmin}{4}$, $\mu<\eta/2$.  
Let $\calV_\theta$ be the set of tranformations $T\in\IET_\tau$
at distance $<\mu$ from $T_\theta$. Note that any $T\in\calV_\theta$ is at distance  $<\eta$ from $T_0$. 
In particular, any $(S,T)\in \calU\times\calV_\theta$ 
     satisfies the assumptions of Lemma \ref{lem;small_supp_comm}. It follows that $U= [S^{q!},TS^{q!}T^{-1}]$ has support in the $\epsilon$-neighborhood of
     $\frac1q \mathbb{N}$.

     We claim that there exists $k$ such that $T^kUT^{-k}$ has support
     disjoint from that of $U$. The claim
     implies that $[T^kUT^{-k}, U]=\id$. As in the end of the proof of Theorem \ref{thm_no_MR_in_free}, 
     one checks that this word is non-trivial (as an element of the abstract free group on $S,T$), 
     so $\grp{S,T}$ is not free group of rank $2$.
Taking for $\calV$ the union of all $\calV_\theta$ for $\theta$ small enough concludes the Proposition.

     Let us now prove the claim. 
     It suffices to show that there exists
     $k$ such that all translation lengths of $T^k$ lies in $[2\eps,\frac1q-2\eps]$ modulo $\frac1q \mathbb{Z}$.

     We know from Lemma \ref{lem;precise_drift} that all translation
     lengths of $T$ are in $[\theta\drmin-2\mu, \theta\drmax +2\mu
     ]$ mod $\frac1q$. According to our choice of $\mu$ they are therefore in
     $[\theta\drmin/2, 2\theta\drmax]$ mod $\frac1q$. Therefore, any translation length $\tau$ of $T^k$
     satisfies $\tau\in[k\theta\drmin/2, 2k\theta\drmax]+\frac1q\bbZ$ . 

     Since $\theta\drmin<\epsilon$, there exists $k$ such that
     $2\epsilon < k\theta\drmin/2 \leq 3\epsilon$. For such a $k$, one has
     $2k\theta\drmax=2k\theta\drmin\times \rho \leq 12\epsilon\rho$.
     Since $\epsilon < \min(\frac{1}{100\rho q},\frac{1}{10q})$ , one successively gets  $12\epsilon\rho\leq \frac{12}{100q} 
     <\frac1q-2\eps$.
     This establishes the claim.
  \end{proof}

  \subsubsection{Driftable permutation}

To conclude this section, we prove that a permutation is driftable if and only if it is admissible.
Recall that $\sigma\in \frakS_n$ is driftable if there exists a change of lengths $\dl=(dl_1,\dots,dl_n)$
with $\sum dl_i=0$, that increases all translation lengths of the corresponding IET (Definition \ref{defi;moveable}),
and $\sigma$ is non-admissible if there is an $m\in\{1,\dots,n\}$ 
such that $\sigma(m)=m$ and $\sigma(\{1,\dots,m-1\})=\{1,\dots,m-1\}$ (Definition \ref{dfn_admissible}).

\begin{prop}\label{prop;admis_is_moveable}
 Let $\sigma \in \frakS_n$. Then $\sigma$ is driftable  if and only if it is admissible.
\end{prop}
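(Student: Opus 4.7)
For the direction ``non-admissible $\Rightarrow$ non-driftable'' I will simply unwind the formula in Lemma~\ref{lem;linearity_Phi}. If $m$ witnesses non-admissibility, so that $\sigma(m)=m$ and $\sigma(\{1,\dots,m-1\})=\{1,\dots,m-1\}$, then $t_m=-\sum_{j<m}l_j+\sum_{j<m}l_{\sigma^{-1}(j)}$, and the second sum equals the first because $\sigma^{-1}$ permutes $\{1,\dots,m-1\}$; hence $t_m\equiv 0$, so $dt_m=0$ for every $\dl$ and no drift vector exists.

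For ``admissible $\Rightarrow$ driftable'' I plan to recast the problem as a linear-algebra question. Let $M$ be the matrix of $\Phi_\sigma$, with $M_{ij}=-\mathbf 1[j<i]+\mathbf 1[\sigma(j)<\sigma(i)]$. Because any IET preserves Lebesgue measure, $\sum_i l_i t_i=\int_{[0,1)}(T(x)-x)\,dx=0$ identically in $\vec l$; this forces $M+M^T=0$, i.e.\ $M$ is antisymmetric (this can also be checked by direct case inspection of $M_{ij}+M_{ji}$). Applying the separation theorem (Stiemke's alternative) to the system ``$M\dl>0$ with $\vec 1\cdot\dl=0$'', $\sigma$ is \emph{not} driftable iff some nonzero $\vec a\in \bbR^n_{\geq 0}$ satisfies $M^T\vec a\in\bbR\vec 1$. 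But then $\vec a^T M\vec a=0$ (antisymmetry) forces the scalar to be zero, because $\vec a\cdot\vec 1>0$ for a nonzero nonnegative $\vec a$. So driftability is equivalent to $\ker M\cap \bbR^n_{\geq 0}=\{0\}$, and my job becomes: deduce non-admissibility from the existence of a nonzero $\vec a\in\ker M\cap\bbR^n_{\geq 0}$.

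Given such an $\vec a$, I plan to extract a witness $s$ of non-admissibility as follows. Introduce the non-decreasing cumulative sequences $U_k=\sum_{j\leq k}a_j$ and $V_k=\sum_{\sigma(j)\leq k}a_j$; the equations $M\vec a=0$ rewrite as $U_{i-1}=V_{\sigma(i)-1}$ for each $i$. As $i$ varies, the pairing $(i-1,\sigma(i)-1)$ is a bijection of $\{0,\dots,n-1\}$, so $U$ and $V$ have the same multiset of values, and since both are non-decreasing they must agree termwise; differencing gives $a_j=a_{\sigma^{-1}(j)}$, i.e.\ $\vec a$ is constant on $\sigma$-orbits. Back-substituting, the equations simplify to $U_{i-1}=U_{\sigma(i)-1}$, so nonnegativity of $\vec a$ forces $a_i=\dots=a_{\sigma(i)-1}=0$ whenever $\sigma(i)>i$ (and symmetrically when $\sigma(i)<i$). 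For $s\in S:=\mathrm{supp}(\vec a)$, the case $\sigma(s)>s$ gives $a_s=0$ directly, and the case $\sigma(s)<s$ gives $a_{\sigma(s)}=0$ while orbit-constancy forces $a_{\sigma(s)}=a_s>0$; both contradict $s\in S$, so $\sigma(s)=s$. Finally, any $i<s$ with $\sigma(i)>s$ would also force $a_s=0$, so $\sigma$ preserves $\{1,\dots,s-1\}$, and $s$ witnesses non-admissibility.

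The hard part is the combinatorial step in the last paragraph: once the duality reduces the question to $\ker M\cap \bbR^n_{\geq 0}=\{0\}$, the monotonicity trick (producing orbit-constancy from equality of multisets of $U_k$ and $V_k$) and the subsequent sign analysis on the support (locating a fixed point and an invariant initial segment) is where the combinatorial content lies; everything else is bookkeeping on the explicit formula of Lemma~\ref{lem;linearity_Phi}.
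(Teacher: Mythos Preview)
Your proof is correct, and it takes a genuinely different route from the paper's.

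For the forward direction (non-admissible $\Rightarrow$ non-driftable) you and the paper do essentially the same thing: read off from Lemma~\ref{lem;linearity_Phi} that the $m$-th translation length is identically zero.

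For the converse, the paper gives a direct, constructive argument. It calls a pair $\{i_1<i_2\}$ \emph{inverted} if $\sigma(i_1)>\sigma(i_2)$, observes that admissibility is exactly the statement that every index lies in some inverted pair, and for each inverted pair writes down an explicit vector $\dl$ (the difference of two basis vectors) whose image $\Phi_\sigma(\dl)$ has all coordinates $\geq 0$ and is strictly positive at $i_1,i_2$. Summing over all inverted pairs produces a drifting direction. This is elementary (no duality, no separation theorem) and yields an explicit drift vector.

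Your approach is dual and more structural. The key observation that $M$ is antisymmetric (your integral argument is sound, and the direct check is immediate) lets you turn the existential question into a kernel question via a theorem of the alternative: $\sigma$ is driftable iff $\ker M\cap\bbR^n_{\geq 0}=\{0\}$. The combinatorial endgame --- passing to cumulative sums, using monotonicity to upgrade equality of multisets to termwise equality, deducing orbit-constancy, and then reading off a fixed point with an invariant initial segment --- is correct and rather elegant. A minor quibble: the alternative you invoke is really Motzkin's transposition theorem (or Gordan's theorem with an equality constraint) rather than Stiemke's lemma in its usual formulation, but the content is the same.

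In short: the paper's proof is shorter and gives an explicit witness; yours is non-constructive but exposes why the obstruction to driftability is precisely a nonnegative kernel vector, and it characterises driftability purely in terms of $\ker M$.
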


Let us note that, together with Proposition \ref{prop;small_open_sets}, 
 this immediately implies Theorem \ref{theo_gnf}.

Before proving  proposition \ref{prop;admis_is_moveable}, we treat a particular case
related to the example on the circle described in Section \ref{sec_circle}. Start from an IET $T$ on the circle with $n$ intervals of continuity, 
and cut the circle at a discontinuity point of $T$ that is not a discontinuity point of $T\m$ (assuming that there is such a point).
Then we get an IET $T_0$ on $[0,1)$. Looking at the interval of continuity of $T\m$ that has been cut, we see that the underlying permutation 
$\tau\in\frakS_{n+1}$ of $T_0$ satisfies $\tau\m(1)=\tau\m(n+1)+1$.
In other words, $\tau(i_0) = n+1$ and $\tau(i_0+1) = 1$ for some $i_0\in\{1,\dots,n\}$. 
Then define  $T_\theta\in \IET_\tau$ to be the transformation with same underlying permutation,
and same lengths of continuity intervals $l'_i=l_i$ for $i\neq i_0,i_0+1$,
whereas $l'_{i_0}=l_{i_0}-\theta$ and $l'_{i_0+1}=l_{i_0+1}-\theta$
for some $\theta$ small enough.
Denote by $t_i$ (resp. $t'_i$) the translation lengths of $T$ (resp $T_\theta$).
Then one easily checks that $t'_i=t_i+\theta$ for all $i=1,\dots,n+1$ (we leave the proof of this fact to the reader).
This shows that $\tau$ is driftable.

\begin{proof}[Proof of proposition \ref{prop;admis_is_moveable}.]
Recall that the map $\Phi_\sigma$ is the map assigning translation lengths to interval lengths defined before
Definition \ref{defi;moveable}.
Assume that $\sigma$ is not admissible, and consider $m$ such that $\sigma(m)= m$ and $\{1, \dots
,m-1\} $ is $\sigma$-invariant. 
Clearly, whatever the lengths of the continuity intervals, the corresponding interval exchange $T$
is the identity on $I_m$ (this follows from Lemma \ref{lem;linearity_Phi}). This implies that for all
$\dl=(dl_i)_{i\leq n}$ such that $\sum dl_i =0$, the corresponding coordinate of $\Phi_\sigma(\dl)$ vanishes.

Now let us assume that $\sigma$ is admissible. 
Say that a pair $\{i_1,i_2\}$ is inverted if $i_1<i_2$ and $\sigma(i_1)>\sigma(i_2)$.
The fact that $\sigma$ is admissible implies that each $i\in\{1,\dots,n\}$ lies in an inverted pair
since otherwise, $\{1,\dots,i-1\}$ and $\{i+1,\dots, n\}$ are $\sigma$-invariant.
Given an inverted pair $\{i_1,i_2\}$,
consider the vector $\dl=(dl_1,\dots,dl_n)$ 
where $dl_{i_1}=-1$, $dl_{i_2}=+1$, and $dl_i=0$ otherwise.
Consider $\dr=\Phi_\sigma(\dl)$ and write $\dr = (t_k)_{k=1\dots n}$ with $t_k = -\sum_{j=1}^{k-1} dl_j + 
\sum_{j=1}^{\sigma(k)-1} dl_{\sigma^{-1}(j)} $. 
The contribution of the first sum is $1$ for $i_1< k \leq i_2$ and $0$ otherwise.
The contribution of the second sum is $1$ for $\sigma(i_2)<\sigma(k)\leq \sigma(i_1)$ and $0$ otherwise.
It follows that $t_k\geq 0$ for all $k$, and that $t_{i_2}$ and $t_{i_1}$ are at least $1$.

Since any $i\in\{1,\dots,n\}$ lies in an inverted pair, adding all vectors $\dl$ corresponding to all inverted pairs yields a vector 
whose image under $\Phi_\sigma$ has only positive entries.
This proves that $\sigma$ is driftable.
\end{proof}

\section{No infinite Kazhdan groups in IET}\label{sec_lattice}

Using non-distortion of cyclic subgroups in $\IET$,
Novak has proved that non-uniform lattices in higher rank 
semi-simple Lie groups with finite center have finite image in $\IET$ \cite{Novak_discontinuity}.
The following applies to lattices (including cocompact ones)
in Kazhdan semi-simple Lie groups.

\begin{thm}\label{lattice}
Let $G$ be a finitely generated Kazhdan group.
Then any morphism from $G$ to $\IET$ has finite image.
\end{thm}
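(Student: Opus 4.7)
The plan is to apply property (T) in two stages: first to show that every $G$-orbit on $[0,1)$ is finite, and then to deduce that $G$ itself is finite. This follows the Witte--Morris observation mentioned in the introduction: orbits of finitely generated subgroups of $\IET$ have polynomial growth, which is too small for a Kazhdan group to act non-trivially upon.

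For the first stage, I would fix a finite generating set $S = \{g_1, \dots, g_k\}$ of $G$ and let $\Lambda \subset \bbR$ be the subgroup generated by the finitely many translation lengths appearing on the continuity intervals of the $g_i^{\pm 1}$. Then $\Lambda \simeq \bbZ^d$ for some $d$. For any $x_0 \in [0,1)$ and any $g \in G$ of word length at most $n$ in $S$, the displacement $g(x_0)-x_0$ lies in $\Lambda$ with $|g(x_0)-x_0| \le nM$, where $M$ bounds the generator translation lengths. Hence the Schreier-graph ball of radius $n$ around $x_0$ in $O := G \cdot x_0$ injects into $\Lambda \cap [-nM, nM]$, so that Schreier graph has polynomial growth. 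I would then extract a subsequence of balls forming a F\o lner sequence, whose normalized indicator functions give almost invariant unit vectors in the permutation representation $\ell^2(O)$. Property (T) then produces a nonzero $G$-invariant vector in $\ell^2(O)$; this vector is constant on the transitive set $O$, and its $\ell^2$-finiteness forces $O$ itself to be finite.

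For the second stage, I would set $D = \bigcup_i \bigl( \Delta(g_i) \cup \Delta(g_i\m) \bigr)$, a finite set, and consider $\tilde D = G \cdot D$. By the orbit-finiteness just established, $\tilde D$ is a finite union of finite sets, hence finite. A short induction on word length using $\Delta(hg') \subset \Delta(g') \cup g'\m(\Delta(h))$, combined with the $G$-invariance of $\tilde D$, would show $\Delta(g) \subset \tilde D$ for every $g \in G$. Consequently each $g \in G$ acts as a pure translation on every component of $[0,1) \setminus \tilde D$; since there are finitely many components, $G$ acts on this finite set through a finite symmetric group, and any element of the kernel restricts to a translation of each bounded component onto itself, hence is the identity there. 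Since IETs are determined by their values on the complement of a finite set, such an element is the identity on $[0,1)$ altogether, so the kernel is trivial and $G$ is finite. The substantive step is the first one --- packaging polynomial growth of the Schreier graph into almost invariant vectors of $\ell^2(O)$ so that property (T) can be invoked; the rest is essentially bookkeeping with IET discontinuity sets.
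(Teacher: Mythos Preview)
Your proof is correct and follows the same two-stage strategy as the paper: polynomial orbit growth together with property~(T) forces every orbit to be finite, and then the finite $G$-invariant set $\tilde D$ of discontinuity points makes the image of $G$ embed in a finite symmetric group. The paper outsources the first stage to a cited result of Stuck, whereas you unpack it via an explicit F{\o}lner argument on the Schreier graph; one small correction is needed there: the set $\Lambda\cap[-nM,nM]$ can be infinite when $\Lambda$ is dense in $\bbR$, so what you actually want to say is that $g(x_0)-x_0$ is a $\bbZ$-combination of the finitely many generating translation lengths with coefficient-sum at most $n$, and the number of such combinations is polynomial in $n$.
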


Here, we give an argument based on the fact that orbits have polynomial growth.

\begin{lemma}
Let $G <\IET$ be a finitely generated group.
Then every orbit of $G$ for its action on $[0,1)$
has polynomial growth.
\end{lemma}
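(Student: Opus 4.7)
The plan is to exploit the piecewise-translation structure of elements of $\IET$: every generator acts by one of finitely many prescribed translation amplitudes, so any word moves a given point by a bounded-length integer combination of these amplitudes, and such integer combinations are few.

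First, I would fix a finite generating set $g_1,\dots,g_k$ of $G$ and collect the finite set $T=\{t_1,\dots,t_m\}\subset\bbR$ of all translation amplitudes appearing on some continuity interval of any $g_i^{\pm 1}$. For any $x\in[0,1)$ and any word $w=g_{i_n}^{\eps_n}\cdots g_{i_1}^{\eps_1}$ of length $n$, a straightforward induction on $n$ shows
\[
w(x)\;=\;x+\sum_{j=1}^n s_j ,
\]
where each $s_j\in T$ is the translation amplitude of $g_{i_j}^{\eps_j}$ on the continuity interval containing the current partial image at step $j$.

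The remaining, routine, step is to reorganize this sum as $\sum_{i=1}^m c_i t_i$ with $c_i\in\bbZ$ and $\sum_i|c_i|\le n$ (using that $T$ is symmetric under negation, since the amplitudes of $g_i^{-1}$ are the negatives of those of $g_i$), and then to count such integer tuples: there are at most $(2n+1)^m$ of them. Consequently the orbit of $x$ under words of length at most $n$ has cardinality bounded by $(2n+1)^m$, a bound which is uniform in $x$ and polynomial in $n$ of degree $m$.

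I do not expect a serious obstacle: the argument reduces, via the piecewise-translation structure of $\IET$, to counting lattice points in an $\ell^1$-ball inside the finitely generated abelian subgroup of $(\bbR,+)$ generated by $T$. The only thing one has to be a bit careful about is the inductive observation that every step of the evaluation of $w(x)$ genuinely contributes a single element of $T$, which is immediate from the definition of an interval exchange transformation.
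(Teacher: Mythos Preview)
Your proposal is correct and follows essentially the same approach as the paper: both arguments observe that any word of length $n$ in the generators moves a point $x$ to $x+\sum c_i t_i$ for a bounded integer combination of the finitely many translation amplitudes $t_i$, and then bound the orbit by counting such combinations. The paper phrases this using a semigroup generating set and states the bound $|B_R\cdot x|\le R^M$ (with $M$ the total number of translation amplitudes), while you use the slightly cruder count $(2n+1)^m$, but the content is the same.
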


\proof
Let $g_1, \cdots, g_n \in \IET$ be a finite set generating $G$ as a semigroup.
For each $i$, let $t_i^j \in [0,1), (1 \le j \le m_i)$ be the 
 translation lengths of $g_i$. 
Set $M=m_1 + \cdots +m_n$ the total number of translations involved.
For $R \in {\Bbb N}$, let $B_R \subset G$ be 
the set of all elements of word-length at most $R$.
For any $x \in [0,1)$, commutation of translations implies that 
$$|B_R.x| \le R^M.$$
\qed

Recall that a group $G$ is \emph{Kazhdan} (or has Kazhdan property $(T)$) 
if any unitary representation of $G$
having almost fixed vectors has a fixed vector. 
Witte-Morris pointed out to us the following result:

\begin{lem}[see for instance Theorem B in \cite{Stuck_growth}]
%
 Let $G$ be a finitely generated Kazhdan group, and $G\actson X$ be an action on a set.
If the orbit of some $x\in X$ is infinite, then it has exponential growth.
\end{lem}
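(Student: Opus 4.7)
The plan is to use the standard property-(T) argument with the permutation representation on $\ell^2(G\cdot x)$, aiming for a contradiction from the assumption that the orbit is infinite but has subexponential growth.

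First, I would fix a symmetric finite generating set $S$ of $G$ with associated word balls $B_R\subset G$, and observe the following dichotomy. Either $\liminf_R |B_{R+1}\cdot x|/|B_R\cdot x|>1$, in which case iterating immediately yields a lower bound $|B_R\cdot x|\geq c\lambda^R$ for some $\lambda>1$ and we are done; or else I can extract radii $R_n\to\infty$ along which $|B_{R_n+1}\cdot x|/|B_{R_n}\cdot x|\to 1$, and I work in this second case toward a contradiction.

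The key step is to build almost invariant vectors for the permutation representation $\pi$ of $G$ on $\ell^2(G\cdot x)$, defined by $(\pi(g)f)(y)=f(g^{-1}y)$. Set
\[
v_n=\frac{\mathbf{1}_{B_{R_n}\cdot x}}{\sqrt{|B_{R_n}\cdot x|}}\in\ell^2(G\cdot x).
\]
For each $s\in S$, I would use that $s(B_{R_n}\cdot x)\subset B_{R_n+1}\cdot x$ together with $|s(B_{R_n}\cdot x)|=|B_{R_n}\cdot x|$, which bounds the symmetric difference of these two sets by $2(|B_{R_n+1}\cdot x|-|B_{R_n}\cdot x|)$ and gives
\[
\|\pi(s)v_n-v_n\|^2=\frac{|s(B_{R_n}\cdot x)\,\triangle\, B_{R_n}\cdot x|}{|B_{R_n}\cdot x|}\leq 2\Bigl(\frac{|B_{R_n+1}\cdot x|}{|B_{R_n}\cdot x|}-1\Bigr)\longrightarrow 0.
\]
A triangle-inequality/telescoping argument then extends this to $\|\pi(g)v_n-v_n\|\to 0$ for every fixed $g\in G$. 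This symmetric-difference bookkeeping is the mild technical core of the argument.

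Finally, I would invoke property (T): the existence of a sequence of almost invariant unit vectors forces a non-zero $\pi$-invariant vector $f\in\ell^2(G\cdot x)$. But any such $f$ is constant on the unique orbit $G\cdot x$, and no non-zero constant lies in $\ell^2$ of an infinite set. The resulting contradiction rules out the subexponential case and establishes that every infinite orbit of a finitely generated Kazhdan group has exponential growth.
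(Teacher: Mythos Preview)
Your argument is correct and is precisely the standard F{\o}lner--type argument underlying this result: subexponential orbit growth yields almost invariant indicator functions in $\ell^2$ of the orbit, property~(T) produces a nonzero invariant vector, and transitivity on the orbit forces that vector to be a nonzero constant in $\ell^2$ of an infinite set, a contradiction. The paper itself gives no proof of this lemma and simply cites Stuck \cite{Stuck_growth}, so there is no in-paper argument to compare against; your write-up supplies exactly the kind of self-contained justification the citation is standing in for.
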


This implies that all orbits of $G$ are finite.
We conclude the proof of Theorem \ref{lattice} by the following fact.

\begin{lem}
  Let $G<\IET$ be a finitely generated group. Assume that every orbit of $G$ is finite.
Then $G$ is finite.
\end{lem}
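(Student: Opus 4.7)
The plan is to embed $G$ into the symmetric group of a suitable finite invariant subset $X \subset [0,1)$: the key observation is that an IET is pinned down by its values on any finite invariant set that contains all of its discontinuity points. I will first construct $X$, then prove that it has this ``controlling'' property, and finally read off the injectivity of the restriction map.

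Fix a finite generating set $g_1,\dots,g_n$ of $G$, and form the finite set $D_0 = \{0\} \cup \bigcup_{i=1}^n \bigl(\Delta(g_i) \cup \Delta(g_i^{-1})\bigr)$. By hypothesis every $G$-orbit is finite, so the $G$-orbit $X := G\cdot D_0$ is itself finite and, by construction, $G$-invariant. The key claim is that $\Delta(g) \subset X$ for every $g \in G$. I would prove this by induction on the length of a word representing $g$ in the letters $g_i^{\pm 1}$: writing $g = s \cdot h$ with $s$ a generator, Lemma~\ref{lem_subadd} yields $\Delta(s\circ h)\subset \Delta(h)\cup h^{-1}(\Delta(s))$, and since $\Delta(s)\subset D_0\subset X$, $\Delta(h)\subset X$ by induction, and $h^{-1}(X)=X$ by $G$-invariance, we conclude $\Delta(g) \subset X$.

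To finish, I would show that the restriction map $\rho: G \to \mathrm{Sym}(X)$ is injective. Enumerate $X = \{0 = x_0 < x_1 < \dots < x_k\}$ and set $x_{k+1}=1$, so that the half-open intervals $[x_i,x_{i+1})$ partition $[0,1)$. If $\rho(g)$ is trivial, then because $\Delta(g)\subset X$ the element $g$ restricts to a translation on each $[x_i, x_{i+1})$, and right-continuity at $x_i$ forces this translation to send $x_i$ to $g(x_i) = x_i$; hence the translation length is $0$ and $g$ acts as the identity on each interval, i.e.\ $g=\id$. Since $X$ is finite, so is $\mathrm{Sym}(X)$, and therefore so is $G$.

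There is no deep obstacle here; the care is mostly bookkeeping. The set $X$ must be $G$-invariant, must contain the discontinuities of each $g_i$ \emph{and} each $g_i^{-1}$ (so that the inductive step covers arbitrary words in $g_i^{\pm 1}$, not just positive words), and must contain $0$, so that each component of $[0,1)\setminus X$ is a half-open interval whose left endpoint lies in $X$ — it is this last condition that lets right-continuity recover the translation length on the component from the value $g(x_i)$, and hence forces $g=\id$ once $g$ fixes $X$ pointwise.
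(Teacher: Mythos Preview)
Your proof is correct and follows essentially the same route as the paper: both take the $G$-orbit of the discontinuity set of the generators, observe it is finite and $G$-invariant, and deduce that $G$ acts faithfully on a finite set. The only cosmetic differences are that the paper passes to the circle $\bbR/\bbZ$ (thereby avoiding the need to adjoin $0$) and phrases the conclusion via ``all orbits in a component have the same finite cardinality'' rather than via your explicit embedding $G\hookrightarrow\mathrm{Sym}(X)$; your version is in fact the more carefully written of the two.
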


\begin{proof}
We view work in $\IET(\bbR/\bbZ)$.
Let $S$ be a finite generating set of $G$.
Let $D\subset\bbR/\bbZ$ be the union of all orbits of the discontinuity points of the elements of $S^{\pm1}$.
The set $D$ is finite and we can assume $D\neq \es$ since otherwise the result is clear.
All generators are continous on the $G$-invariant set 
$\bbR/\bbZ\setminus D$. It easily follows that on each connected component 
of $\bbR/\bbZ\setminus D$, every orbit has the same finite cardinality.
It follows that $G$ is finite.
\end{proof}






\section{Residual finiteness}\label{sec_RF}

Let $S$ be a finite set. A group \emph{marked} by $S$ is a finitely generated
group with a quotient map $\bbF_S\to G$, where $\bbF_S$ is the free group
over $S$.

A sequence of marked groups $\pi_n: \bbF_S \to G_n$ converges to a marked group 
$\pi: \bbF_S\to G$ if for all $R>0$, if $B_R$ is the ball of radius $R$ at $1$ in $\bbF_S$, then $\ker (\pi_n ) \cap B_R = \ker (\pi)\cap B_R$
for sufficiently large $n$. 

The aim of this section is to obtain the following.
\begin{SauveCompteurs}{RF}
  \begin{theo}\label{thm_RF}
    Every finitely generated subgroup of $\IET$ is the limit of a
    sequence of finite groups in the space of marked groups.

    In particular, every finitely presented subgroup of $\IET$ is
    residually finite.
  \end{theo}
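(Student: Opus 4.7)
The plan is to approximate the generators of a finitely generated subgroup $G = \langle g_1,\dots,g_k\rangle \subset \IET$ by $q$-rational IETs for increasingly large denominators $q$, and to show that the resulting finite groups converge to $G$ in the space of marked groups. The core algebraic ingredient is the following: let $\Lambda\subset\bbR$ be the $\bbZ$-submodule generated by $1$, by the translation lengths, and by the discontinuity points of $g_1^{\pm 1},\dots,g_k^{\pm 1}$. As a finitely generated torsion-free abelian group, $\Lambda$ is free of some rank $d$, so fix a $\bbZ$-basis $\lambda_1,\dots,\lambda_d$, and write $1=\sum a_i\lambda_i$ with $a_i\in\bbZ$. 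Every parameter (partition point or translation length) appearing in the generators, and in any word in them, is an integer combination $\sum c_i\lambda_i$ of these basis elements.

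I would then choose rational approximations $\lambda_i^{(n)}\in\bbQ$, with a common denominator $q_n\to\infty$, converging to $\lambda_i$ and lying on the affine hyperplane $\{\sum a_i x_i=1\}$ (possible because this is a rational hyperplane passing through $(\lambda_1,\dots,\lambda_d)$). This ensures the ground length $1$ is preserved. Define $g_i^{(n)}$ by replacing every parameter $\sum c_j\lambda_j$ of $g_i$ by $\sum c_j\lambda_j^{(n)}$. For $n$ large, the finitely many strict inequalities that order the partition points are preserved, while the identities that express the IET condition for $g_i$ (namely that translated continuity intervals repartition $[0,1)$) are $\bbZ$-linear relations among elements of $\Lambda$ and are therefore automatically preserved under a $\bbZ$-basis substitution. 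So each $g_i^{(n)}$ is a well-defined IET, and it is $q_n$-rational, so the subgroup $G_n=\langle g_1^{(n)},\dots,g_k^{(n)}\rangle$ acts by permutations of the finitely many intervals $[j/q_n,(j+1)/q_n)$ and is finite.

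The main step is to verify convergence $G_n\to G$ in the space of marked groups. Fix $R$ and consider any word $w$ of length $\le R$ in the generators. Computing $w(g_1,\dots,g_k)$ formally yields an IET specified by finitely many apparent partition points (each an integer combination of the $\lambda_i$) and, on each apparent piece, a cumulative translation length $\sum c_i\lambda_i$. The IET is the identity precisely when each such translation combination vanishes. By $\bbZ$-linear independence of $\lambda_1,\dots,\lambda_d$, the combination $\sum c_i\lambda_i$ vanishes iff all $c_i=0$ iff $\sum c_i\lambda_i^{(n)}=0$. Moreover, two partition-point combinations that coincide in $\bbR$ must be formally equal (by the same independence), so coincidences persist in the approximation, and the strict inequalities also persist for $n$ large. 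Since there are finitely many words of length $\le R$ and finitely many relevant inequalities and equalities, there is $N_R$ such that for all $n\ge N_R$ the kernels of $\bbF_S\onto G$ and $\bbF_S\onto G_n$ agree on the ball of radius $R$. This is convergence in the marked-group topology.

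The residual finiteness statement is then immediate: if $G=\langle S\mid r_1,\dots,r_m\rangle$ is finitely presented, then for $n$ large every $r_j$ is trivial in $G_n$, so the quotient map $\bbF_S\onto G_n$ factors through $G$, giving a finite quotient of $G$. Any non-trivial element of $G$ is represented by some word $w$, and convergence applied to the ball of radius $|w|$ yields an $n$ with $w$ non-trivial in $G_n$, furnishing a finite quotient separating that element. The step I expect to be most delicate is the bookkeeping in the third paragraph: one must be confident that the entire combinatorial structure of the computation of $w$ as an IET (which partition points coincide, which lie in which order, which mergings yield genuine identity pieces) is governed purely by the formal $\bbZ$-linear algebra of $\Lambda$, so that it transfers intact to the $q_n$-rational approximation.
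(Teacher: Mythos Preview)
Your approach is correct and genuinely different from the paper's. The paper works externally: it parametrises $\IET_{\leq n}$ as a piecewise-linear subset of Euclidean space, proves the group law is a PL map, and then, for each radius $R$, intersects over all words $w$ of length $\le R$ the PL loci $\{w(t_1,\dots,t_k)=\id\}$ or their complements (according to whether $w$ vanishes in $G$); the intersection is non-empty (it contains $(g_1,\dots,g_k)$) and PL, hence contains a rational point by the general fact that a system of rational linear equations and strict inequations with a real solution has a rational solution. You instead work internally with the $\bbZ$-module $\Lambda$ generated by the defining data, choose a $\bbZ$-basis, and substitute rational approximations on the rational hyperplane $\sum a_i x_i=1$. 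Your ``delicate bookkeeping'' step does go through, precisely because the $\bbZ$-linear independence of the $\lambda_i$ forces every equality among $\bbZ$-combinations to be formal (hence preserved under substitution), while the finitely many strict inequalities governing the combinatorics of composing up to length $R$ are eventually preserved by convergence.

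As to what each buys: the paper's PL packaging absorbs all the inductive bookkeeping into the single lemma that multiplication is PL, and then cites a clean external fact; it is short but less explicit. Your argument is more constructive, producing an actual convergent sequence $G_n\to G$ (rather than a separate finite quotient for each $R$), and it highlights that the entire computation of any word lives in a fixed finitely generated $\bbZ$-module --- a structural observation the paper's proof leaves implicit.
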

\end{SauveCompteurs}

The second statement immediately follows from the first since
 any finitely presented marked group $G$,
 has a neighborhood consisting of quotients of itself (i.e., for all $\pi':\bbF_S\to G'$ close enough to $\pi:\bbF_S\to G$,
 $\pi'=q \circ \pi$ for some $q:G \to G'$).
This is because if $R$ is the length of the longest defining 
relation of $G$, then all defining relations
are in $\ker(\pi) \cap B_R$, hence those relations are 
in $\ker(\pi_n)$ for sufficiently large $n$, so that $G_n$ is a quotient 
of $G$.

In particular Thompson's groups cannot embed in $\IET$.

\begin{cor}
Thompson's groups $F$ (on the interval) and $T,V$ (on the circle and the cantor set) 
 are not subgroups of $\IET$.
\end{cor}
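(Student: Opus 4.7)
The plan is to deduce the corollary directly from Theorem \ref{thm_RF}. Thompson's groups $F$, $T$, and $V$ are all finitely presented, so by Theorem \ref{thm_RF} any subgroup of $\IET$ isomorphic to one of them would have to be residually finite. It therefore suffices to establish that none of $F$, $T$, $V$ is residually finite, and then combine this with the theorem.

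For $T$ and $V$, I would invoke the classical fact (going back to Thompson) that both are infinite simple groups. Any homomorphism $\phi$ from an infinite simple group to a finite group has kernel that is normal, hence either trivial or the whole group; a trivial kernel would embed the infinite group into a finite one, so $\ker\phi$ is the whole group. Thus $T$ and $V$ admit no nontrivial finite quotient, and in particular are not residually finite.

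For $F$, I would use the standard structure result that $F/[F,F]\cong \bbZ^2$ and that the commutator subgroup $[F,F]$ is infinite and simple. Then for any homomorphism $\phi:F\to Q$ with $Q$ finite, the restriction $\phi|_{[F,F]}$ has kernel that is normal in $[F,F]$, hence either trivial or all of $[F,F]$; triviality is impossible since $[F,F]$ is infinite, so $[F,F]\subset \ker\phi$. Consequently no nontrivial element of $[F,F]$ survives in any finite quotient of $F$, and $F$ fails to be residually finite.

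Combining these three facts with Theorem \ref{thm_RF} yields the corollary. There is no real obstacle: the argument rests entirely on known structural properties of Thompson's groups, which I would cite rather than reprove, and on the already-established residual finiteness of finitely presented subgroups of $\IET$.
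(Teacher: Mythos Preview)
Your proposal is correct and follows essentially the same route as the paper: invoke finite presentability, then show $T,V$ fail residual finiteness by simplicity and $F$ fails because every finite quotient kills the commutator subgroup, and conclude via Theorem~\ref{thm_RF}. The only cosmetic difference is that for $F$ the paper cites the fact that every proper quotient of $F$ is abelian, whereas you use the equivalent fact that $[F,F]$ is infinite and simple; both lead immediately to $[F,F]$ lying in the kernel of any map to a finite group.
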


\begin{proof} 
It is known that $F,T$ and $V$ are finitely presented
(cf. \cite{CFP_thompson}).
The groups $T$ and $V$ are simple and infinite, therefore not residually finite, 
and do not embed in IET by Theorem \ref{thm_RF}.

The group $F$ is not simple, but any proper quotient is abelian \cite{CFP_thompson}. 
Since $F$ is infinite and non-abelian, 
it is not residually finite: any commutator has to be mapped 
to the trivial element in a finite quotient.
Therefore, $F$ is not a subgroup of $\IET$.
\end{proof}

The following is also immediate from Theorem 
\ref{thm_RF}. We'd like to thank Alexei Muranov for 
turning our attention to the idea of sofic groups. See \cite{Gromov_endomorphisms},
or \cite{Pestov_hyperlinear} for definitions.
\begin{cor}\label{sofic}
The group $\IET$ is sofic.
\end{cor}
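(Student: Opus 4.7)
The plan is to deduce the soficity of $\IET$ from Theorem \ref{thm_RF} by invoking two standard closure properties of the class $\mathcal{S}$ of sofic groups. First, I would recall that soficity is a local property: a group $G$ lies in $\mathcal{S}$ if and only if every finitely generated subgroup of $G$ lies in $\mathcal{S}$. This reduces the problem to showing that every finitely generated subgroup of $\IET$ is sofic.

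Second, I would use that $\mathcal{S}$ is closed under taking limits in the space of marked groups in the sense defined in Section \ref{sec_RF}. Concretely, if a sequence of marked groups $\pi_n:\bbF_S\to G_n$ with $G_n\in\mathcal{S}$ converges to $\pi:\bbF_S\to G$, then $G\in\mathcal{S}$: one obtains approximations of $G$ by finite symmetric groups through a diagonal argument applied to approximations of the $G_n$'s, matching the behaviour of $\pi$ on larger and larger balls $B_R\subset \bbF_S$. This is the standard local embeddability characterization of soficity recalled in \cite{Pestov_hyperlinear} (see also \cite{Gromov_endomorphisms}).

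Combining these two facts, I would then just note that finite groups are tautologically sofic (they embed into finite symmetric groups). By Theorem \ref{thm_RF}, every finitely generated subgroup of $\IET$ is a limit of finite (hence sofic) groups in the space of marked groups, and is therefore sofic by the second fact above. The local characterization then yields that $\IET$ itself is sofic.

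There is no real obstacle here: both closure properties of $\mathcal{S}$ are classical and the work has been done in proving Theorem \ref{thm_RF}. The only point to be careful about is that the definition of convergence in the space of marked groups used in Section \ref{sec_RF} (matching of kernels on balls of arbitrary radius) is precisely the one under which soficity passes to limits, so no re-interpretation is needed.
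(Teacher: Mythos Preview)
Your proof is correct and follows essentially the same route as the paper: reduce to finitely generated subgroups by locality of soficity, then apply Theorem~\ref{thm_RF} together with the facts that finite groups are sofic and that soficity is closed under limits in the space of marked groups. The only cosmetic difference is the reference for these closure properties (the paper cites \cite{Cornulier_sofic}).
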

In view of Theorem \ref{thm_RF}, this follows immediately from the fact that finite groups are sofic, limits of sofic groups are sofic, 
and that soficity for a group holds
if and only if it holds for its finitely generated subgroups \cite{Cornulier_sofic}.

The idea of the proof of the theorem is to produce a finite group by replacing the coefficients (\ie the lengths) of the interval exchanges considered
by rational numbers, keeping the fact that a given finite collection of words represent the identity or not.
Then, we use the fact that the group generated by interval exchanges with rational coefficients is always finite.
The control on trivial and non-trivial elements is based on the fact that any system of linear equations and inequations with rational coefficients
that has a real solution, also has a rational solution. 

\begin{prop}[{\cite[Cor 3.1.17]{Marker_model}}]\label{prop;lin_sys}
 If a linear system of equations and inequations with rational coefficients has a real solution, then it has a rational solution. 
\end{prop}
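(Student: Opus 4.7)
The plan is to prove this by an elementary Fourier--Motzkin elimination argument, by induction on the number of variables. First, I would normalize the system: any equation $L(x) = c$ with rational coefficients can be rewritten as the pair $L(x) \leq c$ and $L(x) \geq c$, so we may assume the system is a finite collection of (possibly strict) linear inequalities with rational coefficients.

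The base case of one variable is immediate: a finite conjunction of rational inequalities in $x_1$ cuts out a (possibly empty) intersection of half-lines, hence an interval with rational endpoints, and any non-empty such interval contains a rational point by density. For the inductive step, to eliminate the last variable $x_n$, partition the inequalities into three groups according to the sign of the coefficient of $x_n$: positive, negative, or zero. Each inequality of positive type can be put in the form $x_n \geq A_i(x_1, \ldots, x_{n-1})$ and each of negative type as $x_n \leq B_j(x_1, \ldots, x_{n-1})$, where the $A_i$ and $B_j$ are affine functions with rational coefficients. Form a derived system in $n-1$ variables by keeping the zero-type inequalities and adding $A_i \leq B_j$ for every pair $(i,j)$. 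This derived system still has rational coefficients, and is solvable in $\bbR$ if and only if the original is, since for given $(x_1,\ldots,x_{n-1})$ the existence of an admissible $x_n$ amounts exactly to the requirement $\max_i A_i \leq \min_j B_j$. By induction, the derived system has a rational solution $(q_1,\ldots,q_{n-1})$; substituting these values, the constraint on $x_n$ becomes a non-empty rational interval, which contains some rational $q_n$, giving a rational solution of the full system.

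The only delicate bookkeeping is tracking strict versus non-strict inequalities through the elimination step: the combined constraint $A_i \leq B_j$ must be declared strict exactly when either parent is, and in the base case a non-empty open rational interval still contains a rational number. This minor issue aside, the argument is entirely routine. As an alternative, one could invoke the structural fact that a non-empty rational polyhedron always contains a rational point (by descending to a minimal face, which is cut out by a full-rank subsystem of rational equations and therefore rational), but Fourier--Motzkin has the advantage of being self-contained and constructive.
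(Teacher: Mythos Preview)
Your Fourier--Motzkin argument is correct. The induction is sound, and you have correctly identified the only subtlety, namely the propagation of strictness: the derived constraint $A_i \leq B_j$ must be strict whenever either parent constraint on $x_n$ is strict, and with this rule the equivalence ``derived system feasible over $\bbR$ $\Leftrightarrow$ original system feasible over $\bbR$'' goes through. The back-substitution step is also fine: once $(q_1,\dots,q_{n-1})\in\bbQ^{n-1}$ solves the derived system, the admissible range for $x_n$ is a non-empty interval whose endpoints $A_i(q),B_j(q)$ are rational, and such an interval always contains a rational point (if it degenerates to a single point, that point is one of the rational endpoints).

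As for comparison with the paper: there is nothing to compare, because the paper gives no proof at all --- it simply cites Corollary~3.1.17 of Marker's \emph{Model Theory}. The argument there is model-theoretic: it uses quantifier elimination for the theory of ordered divisible abelian groups (equivalently, ordered $\bbQ$-vector spaces), from which it follows that $\bbQ$ is an elementary substructure of $\bbR$ in the language of ordered $\bbQ$-vector spaces, so any existential formula with rational parameters satisfied in $\bbR$ is already satisfied in $\bbQ$. Your approach is strictly more elementary and self-contained, avoids any appeal to model theory, and is constructive to boot; the model-theoretic route, on the other hand, places the result in a broader framework and generalizes immediately to other elementary extensions. Either is perfectly adequate for the paper's purposes.
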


In this statement, we allow equalities $f(x)=0$ and strict inequalities $f(x)>0$ for $f:\bbR^n\ra \bbR$ an affine map with rational coefficients.

\begin{proof}[Proof of Theorem \ref{thm_RF}]
 Define a rational \emph{polyhedron} in $\bbR^N$ as the set of solutions of
  finitely many equations and inequations of the form $f(x)=0$ or $f(x)>0$ where $f:\bbR^N\ra \bbR$ is an affine map.  A \emph{PL}
  subset of $\bbR^N$ is a finite union of rational polyhedra.
  If $C\subset \bbR^N$ is a PL subset, we say that $f:C\ra \bbR^M$ is \emph{PL} if there is a partition of $C$ into finitely many
  rational polyhedra in restriction to which $f$ coincides with an affine map $\bbR^N\ra \bbR^M$ (note that contrary to standard
  terminology, we don't require $f$ to be continuous).  Obviously, composition and inverses of PL maps are PL, and the preimage of
  a PL subset by a PL map is a PL subset.
The intersection of finitely many PL subsets is a PL subset. 
  By Proposition \ref{prop;lin_sys}, any non-empty rational polyhedron contains a rational point,
  so does any non-empty PL subset.

  Note that if $C_1\subset \bbR^{N_1}$ and $C_2\subset \bbR^{N_2}$ are PL subsets, then $C_1\times C_2\subset \bbR^{N_1+N_2}$ is a
  PL subset. Moreover, there is a PL embedding of $C_1\dunion C_2$ to a PL subset of $\bbR^{N_1}\times \bbR^{N_2}\times \bbR$ as
  $C_1\times \{0\}\times \{0\}\cup \{0\}\times C_2\times \{1\}$.

  Since for each permutation $\sigma\in \frakS_n$, $\IET_\sigma$ is parametrised by an open simplex of dimension $n-1$, 
  and embeds naturally in $\bbR^{n}$,
  the set $\IET_{\leq n}$ of interval exchange transformations of $[0,1)$ with at most $n$ points of discontinuity
  can be identified with a PL subset of some $\bbR^{N_n}$.

The following lemma is a key.
\begin{lem}
  The group law $\IET_{\leq n}\times \IET_{\leq m}\ra \IET_{\leq n+m}$ is a PL map.
\end{lem}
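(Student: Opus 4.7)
The plan is to stratify $\IET_{\leq n}\times\IET_{\leq m}$ into finitely many rational polyhedra on each of which the composition is given by an affine formula landing in a single simplex $\IET_\rho\subset\IET_{\leq n+m}$. First I would fix permutations $\sigma\in\frakS_k$ (with $k\leq n+1$) and $\tau\in\frakS_l$ (with $l\leq m+1$) and work on the product of open simplices $\IET_\sigma\times\IET_\tau$, parametrized by length tuples $(l_i)_{i\leq k}$ and $(l'_j)_{j\leq l}$ each summing to $1$. The discontinuity points of $f\in\IET_\sigma$ are the partial sums $b_i=l_1+\dots+l_i$, and by Lemma~\ref{lem;linearity_Phi} the translation lengths $t_i$ of $f$ depend linearly on $(l_i)$. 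The analogous statement holds for $g\in\IET_\tau$ with breaks $b'_j$ and translations $t'_j$.

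Next I would list the candidate discontinuity points of $g\circ f$. By Lemma~\ref{lem_subadd}, $\Delta(g\circ f)\subset\Delta(f)\cup f\m(\Delta(g))$, a set of cardinality at most $n+m$. The points of $\Delta(f)$ are the $b_i$, affine in $(l_i)$. A point of $f\m(\Delta(g))$ has the form $b'_j-t_{i}$ whenever $b'_j$ lies in the $i$-th image subinterval of $f$; this too is an affine expression in the joint tuple $(l_i,l'_j)$. In this way I get a finite collection of affine functions $\phi_1,\dots,\phi_{n+m}:\bbR^{k+l}\to\bbR$ listing all candidate breakpoints of $g\circ f$.

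I would then refine the parameter space by the combinatorial data: the total preorder on $\{0,\phi_1,\dots,\phi_{n+m},1\}$ and, for each $j$, the index $i(j)$ such that $b'_j$ falls in the image interval $[f(b_{i(j)-1}^+),f(b_{i(j)}^-))$ (equivalently $b'_j-t_{i(j)}\in[b_{i(j)-1},b_{i(j)})$). Each condition is a system of weak and strict linear inequalities with rational coefficients among the $\phi_\alpha$'s, so each consistent choice cuts out a rational polyhedron in $\bbR^{k+l}$. On such a polyhedron the ordered distinct values $0=c_0<c_1<\dots<c_p=1$ of the candidate points are affine functions of $(l_i,l'_j)$, hence so are the new lengths $c_{s+1}-c_s$ of the continuity intervals of $g\circ f$; moreover the combinatorics already fix the permutation $\rho$ sending each such interval to its image, so the composition lies in a single stratum $\IET_\rho$, and the composition map is realized in the ambient Euclidean coordinates by an affine map.

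The main obstacle, and the point to verify carefully, is that the partition really is by rational polyhedra in the sense of the proof above (all defining inequalities are linear with rational coefficients, a property that depends on writing $b'_j-t_{i}$ in the $(l,l')$ coordinates, where it is rational-affine by Lemma~\ref{lem;linearity_Phi}), and that on each top-dimensional piece the new lengths $c_{s+1}-c_s$ are strictly positive so that the image lies in the open simplex $\IET_\rho$. Degenerate strata where two candidate breakpoints coincide correspond to compositions with strictly fewer discontinuities, and are lower-dimensional rational polyhedra on which $g\circ f$ simply lies in a smaller stratum $\IET_{\rho'}\subset\IET_{\leq n+m}$; these need to be listed separately so the target is always a single simplex. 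Taking the union over the finitely many pairs $(\sigma,\tau)$ and all strata yields the desired PL decomposition of $\IET_{\leq n}\times\IET_{\leq m}$ and shows that composition is PL into $\IET_{\leq n+m}$.
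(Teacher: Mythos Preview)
Your proposal is correct and follows essentially the same strategy as the paper: fix the underlying permutations, stratify the parameter space by the combinatorics of the candidate breakpoints (each stratum a rational polyhedron cut out by linear inequalities with rational coefficients), and observe that on each stratum the output lengths and permutation are affine/constant.

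One organizational difference worth noting: you work directly in the domain of $g\circ f$, listing $\Delta(f)\cup f^{-1}(\Delta(g))$, which forces you to first stratify by the index $i(j)$ recording which image interval of $f$ each $b'_j$ falls into before $f^{-1}(b'_j)=b'_j-t_{i(j)}$ becomes affine. The paper instead partitions the intermediate copy of $[0,1)$ by $\Delta(S)\cup T\Delta(T)$; since $T\Delta(T)=\Delta(T^{-1})$ consists of the image breakpoints of $T$, both pieces are \emph{directly} affine in the length coordinates with no preliminary case split. Only after fixing this combinatorics does the paper pull back by $T^{-1}$, at which point the relevant translation is already determined. This collapses your two-step stratification (first by $i(j)$, then by the preorder on the $\phi_\alpha$) into a single one, but the content is the same.
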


\begin{proof}
It is clear that the map $S\mapsto S\m$ is PL.
It is enough to prove that the group law is PL on $\IET_{\sigma}\times\IET_{\tau}$.
Consider $(S,T)\in \IET_{\sigma}\times\IET_{\tau}$.
Recall that $\Delta(S)$ denotes its set of discontinuity points (which we view as an ordered tuple $b_1<\dots<b_n$.
Clearly,  $\Delta(S)$ and  $T\Delta(T)$ are affine functions
of $(S,T)$.
Thus each possible combinatorics for $\Delta(S)\cup T\Delta(T)$ defines a rational polyhedron, and we can assume that one of these combinatorics is fixed.
Denote by $K_k$ the set of connected components of  $[0,1)\setminus \Delta(S)\cup T\Delta(T)$, and
note that $S\circ T$ is continuous on the intervals $T\m(K_k)$.
The endpoints of $K_k$ are affine functions of $(S,T)$.
Since the evaluation map $[0,1)\times \IET_{\leq m}\ra [0,1)$ sending 
$(x,T)$ to $T(x)$ is PL, the endpoints of $T\m(K_k)$ are PL (indeed affine) functions of $(S,T)$.
The amplitude of translation of $S\circ T$ on $T\m(K_k)$ is the sum of the amplitude of $T$ on
$T\m(K_k)$ and of the amplitude of $S$ on $K_k$.
It easily follows that the group law is PL.
\end{proof}

  Now consider a subgroup $G\subset \IET$ generated by $g_1,\dots,g_k\in \IET_{\leq n}$.  We see $G$ as a quotient of the free
  group $\bbF_k$ over $\{x_1,\dots, x_k\}$, in which $x_i$ is mapped on $g_i$.  
Denote by $B_R\subset {\Bbb F}_k$ the set of words of
  length $\leq R$.  We need to show that, for all $R>0$, there is a finite quotient $Q$ of $\bbF_k$ such that for any element
  $w(x_1,\dots, x_k)\in B_R$, $w(g_1,\dots, g_k)$ is trivial in $G$ if and only if the image of $w$ in $Q$ is trivial.

  For each $w\in B_R$, consider the PL map $f_w:(\IET_{\leq n})^k\ra \IET_{\leq Rn}$ defined by $(t_1,\dots,t_k)\mapsto
  w(t_1,\dots,t_k)$.  Consider $C_w\subset (\IET_{\leq n})^k$ the PL subset defined as $C_w=f_w\m(\{\id\})$ if $w(g_1,\dots,g_k)=\id$
  in $G$, and as $C_w=f_w\m(\IET_{\leq Rn}\setminus \{\id\})$ otherwise.  
  The intersection $C$ of all $C_w$ is a PL subset that
  contains $(g_1,\dots,g_k)$ by construction.  It follows that $C$ contains a point with rational coordinates $(g'_1,\dots,  g'_k)$.  
In particular, the subgroup $Q$ of $\IET$ generated by $g'_1,\dots,g'_k$ is finite.  Since $(g'_1,\dots,g'_k)\in
  C$ this finite quotient of $\bbF_k$ satisfies that for all $w\in B_R$, $w(g_1,\dots, g_k)$ is trivial in $G$ if and only if
  $w(g'_1,\dots,g'_k)$ is trivial in $Q$.
\end{proof}

\section{An example of subgroup}
\label{sec_example}

\begin{SauveCompteurs}{example}
  \begin{thm}\label{thm_example}
There is a subgroup $F\subset IET$ generated by two elements that contains
\begin{itemize}
\item an isomorphic copy of all finite groups
\item a free semigroup.
\end{itemize}
In particular this group is not linear and has exponential growth.
\end{thm}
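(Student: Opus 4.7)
The plan is to construct explicit generators $a, b \in \IET$ and show that $F = \langle a, b \rangle$ enjoys both properties simultaneously. A natural attempt is to take $a$ to involve an irrational rotation (whose orbits are dense and recurrent on $[0,1)$) and $b$ to be a finite order IET with small support, for instance an involution swapping two adjacent equal sub-intervals. Then conjugates of $b$ by powers of $a$ yield many involutions whose supports have prescribed overlap patterns, controlled by the dynamics of $a$. The irrational ingredient in $a$ is essential, because if every generator is $q$-rational for some fixed $q$, the generated group is finite.

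For the first bullet, since every finite group embeds in some $S_n$, it suffices to produce a copy of $S_n$ inside $F$ for every $n$. Given the conjugates $b_k := a^k b a^{-k}$, I would search for an indexing $k_1 < \dots < k_{n-1}$ such that $b_{k_i}$ and $b_{k_{i+1}}$ share exactly one common interval of support (so that together they act as adjacent transpositions and generate $S_3$ on three consecutive intervals), while $b_{k_i}$ and $b_{k_j}$ have disjoint supports for $|i-j| \geq 2$ (so they commute by Lemma \ref{lem;commut1.5}). The $n-1$ elements $b_{k_1}, \dots, b_{k_{n-1}}$ then act on $n$ distinguished intervals as the Coxeter generators of $S_n$, giving the desired embedding. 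The key flexibility is that the rotation $a$ is irrational, so by density of its orbit and the Diophantine properties of its rotation number, configurations that exist at unit scale recur at arbitrarily small scales: this self-similarity is what lets a single pair of generators produce $S_n$ for every $n$.

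For the free sub-semigroup, I would look for two elements $u, v$ in $F$ (likely obtained as compositions of $a$ and $b$ displaying appropriate contraction on complementary pieces) and play ping-pong: find disjoint subdomains $X, Y \subset [0,1)$ with $u(X \cup Y) \subset X$ and $v(X \cup Y) \subset Y$. Then any nontrivial positive word in $u, v$ sends $X \cup Y$ into either $X$ or $Y$ according to its leftmost acting letter, forcing distinct positive words to define distinct elements, and hence producing a free sub-semigroup on $\{u,v\}$. Exponential growth of $F$ with respect to any finite generating set is then immediate from the existence of $2^n$ distinct length-$n$ words in $u, v$.

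The non-linearity conclusion follows quickly once $F$ contains every finite group. By Jordan's theorem, any finite subgroup of $\mathrm{GL}_d(\bbC)$ contains a normal abelian subgroup of index at most a constant $J(d)$, so a finitely generated subgroup of $\mathrm{GL}_d(\bbC)$ can contain only finitely many isomorphism types of finite simple non-abelian groups; via Mal'cev one reduces the case of a general field to this one. But $F$ would contain $A_n$ for every $n \geq 5$, supplying infinitely many non-isomorphic finite simple non-abelian subgroups, a contradiction. The main obstacle I expect is the $S_n$-for-all-$n$ step: constructing a single pair of IETs whose conjugation dynamics has enough built-in self-similarity to yield symmetric groups of unbounded order requires a carefully designed, probably multi-scale definition of $a$ and $b$ rather than anything one writes down by inspection.
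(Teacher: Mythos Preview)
Your outline has two genuine gaps, one in each bullet.

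\textbf{Free semigroup via ping-pong.} The inclusion $u(X\cup Y)\subset X$ is impossible for any $u\in\IET$ and any disjoint subdomains $X,Y$ of positive measure: interval exchange transformations are measure-preserving bijections, so $|u(X\cup Y)|=|X|+|Y|>|X|$. No contracting ping-pong is available in $\IET$, and this is precisely why Katok's question about free subgroups is hard. The paper's argument is entirely different: with $r$ an irrational rotation on a circle $\calC$ and $s$ an involution swapping half of $\calC$ with an extra interval $\calJ$, one sets $r'=srs$ and shows that $r,r'$ generate a free semigroup by tracking continuity at the single point $p=\partial\calI$. For any positive word $W$, the product $Wr$ is continuous at $p$ while $Wr'$ is discontinuous at $p$ (the latter uses that $p,q$ lie in distinct $r$-orbits). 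This lets one read off the first letter of $w$ from the transformation $W$, and induction finishes.

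\textbf{Symmetric groups from conjugates of a fixed involution.} If $b$ swaps two adjacent intervals of length $\ell$ and $a$ is a rotation, then every conjugate $a^kba^{-k}$ has support of the same total length $2\ell$. To realise $S_n$ via Coxeter generators you need $n$ pairwise disjoint intervals of length $\ell$, forcing $n\leq 1/\ell$; moreover, for the braid relation $(b_{k_i}b_{k_{i+1}})^3=1$ to hold exactly you need the rotation to move by \emph{exactly} $\ell$, which pins $\ell$ to the orbit $\alpha\bbZ+\bbZ$ and still bounds $n$. Your appeal to ``self-similarity'' does not explain how to manufacture, inside $\langle a,b\rangle$, an involution with strictly smaller support than $b$. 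The paper's construction supplies exactly this missing mechanism: working on the domain $\calC\cup\calJ$, one replaces $r$ by a high power so that its amplitude $\lambda$ is below $\frac{1}{10n}$, and then builds from $r$ and $s$ (via an explicit short word, essentially a sequence of commutators between the two ``rotations'' $r$ and $srs$) an involution $\sigma$ whose support consists of two intervals of length $2\lambda$, one in $\calJ$ and one in $\calC$. Conjugating this $\sigma$ by $r^{2},r^{4},\dots,r^{2n}$ then yields the desired permutation group on $n+2$ intervals. The point is that the small-support element is not a conjugate of a generator but a word whose length grows with $n$.

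Your non-linearity argument via Jordan's theorem is fine and interchangeable with the paper's bounded-torsion argument.
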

\end{SauveCompteurs}

\begin{figure}[htbp]
  \centering
  \includegraphics{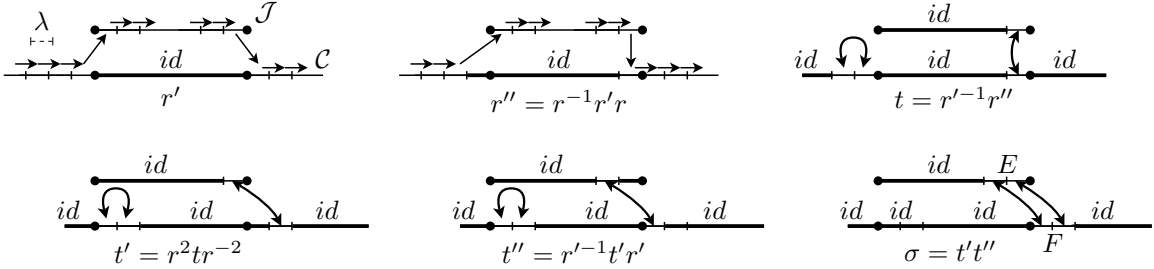}
  \caption{A non-linear example.}\label{fig_dyn}
\end{figure}

\begin{proof}
 Let $\calc=\bbR/2\bbZ$ be a circle of perimeter $2$, and $\calI=[0,1) \subset \calc$ an interval of length $1$.
Let $\calJ=[0,1)$ be a copy of $\calI$.
Consider the domain $\cald=\calC \cup \calJ$, and two iets $r,s$ on $\cald$ such that
$r$ is an irrational rotation on $\calc$ and the identity on $\calJ$, and $s$ is the involution that
switches $I$ and $J$ and is the identity on $C\setminus I$.

We prove that the subgroup $F=\langle r,s \rangle <IET$
contains every permutation group $\Sigma_n$.
First, up to replacing $r$ with some power, we may assume that the amplitude $\lambda$ of $r$
satisfies $0<\lambda< \frac1{10n}$.

We claim that $\grp{r,s}$ contains an involution $\sigma$ that exchanges
$E=[1-2\lambda,1)\subset \calJ$ and $F=[1,1+2\lambda)\subset \calc$,
 and that is the identity everywhere else.
The element $r'=srs$ can be thought as a rotation
of the circle $\calc '$ obtained from $\calc$
by replacing $\calI$ by $\calJ$.
Figure \ref{fig_dyn} shows the dynamics of $r'$, $r''=r\m r' r$, $t=r'^{-1} r''$,
$t'=r^2tr^{-2}$,  $t''=r'^{-1} t' r$, and finally $\sigma=t't''$.

Note that $r^2$ fixes $E$ and moves $F$ apart from itself.
Since $\lambda<<\frac{1}{n}$, the intervals $F$, $r^2(F)$,\dots $r^{2n}(F)$ are disjoint.
It follows that the group $\grp{\sigma, r^2\sigma r^{-2},\dots, r^{2n}\sigma r^{-2n}}$
permutes the intervals $E,F,\dots r^{2n}(F)$,
and is isomorphic to $\Sigma_{n+2}$.

We now prove that the semigroup of $F$ generated by $r$ and $r'=srs$ is a free semigroup.
Let $f:\cal D \to \cal C$ denotes the map which
is identity on $\cal C$ and maps $\cal J$ to $\cal I$ isometrically, and let $p,q$ be the initial and terminal endpoints of $\calI$.
Given a positive word $w$ on $\{r,r'\}$, denote by $|w|$ its word length and by $W$ the corresponding transformation.
Observe that for any $x\in \calD$, there exists $n\in\{0,\dots,|w|\}$
such that $f(W(x))=r^n(f(x))$. 
If $w\neq 1$, then for $x\in \calc\setminus\calI$, one has $n>0$, and since $r$ is irrational, this already implies $W\neq \id$.
We note for future use that a similar argument shows shows that $W(p)=p$ if and only if $W$ is a power of $r'$.
We still have to check that the mapping $w\mapsto W$ is injective.
 
Define 
\begin{eqnarray*}
D_k&=&f\m(\{p,q\}\cup r\m\{p,q\}\dots\cup r^{-k+1}\{p,q\}), \\
 \text{and } D'_k&=&f\m(\{p,q\}\cup r\{p,q\}\dots \cup r^{k-1}\{p,q\}).
\end{eqnarray*}

We see that for any positive word $w$, the discontinuity points of $W$ (resp. $W\m$) are in $D_{|w|}$ (resp. $D'_{|w|}$).
Since $r$ is irrational, $p\notin r\m(D_k)$, so $Wr$ is continuous at $p$.
We claim that $W'=Wr'$ is discontinuous at $p$. 
Indeed, if $W'$ was continuous at $p$, then since $r'=W\m W'$ is discontinuous at $p$, $W\m$ would be discontinous at $W'(p)$,
so $W'(p)\in D'_{|w|}$. Since $r$ is irrational and $p,q$ are not in the same orbit under $r$, 
this leaves only the possibility $W'(p)=p$ ($W'(p)=s(p)$ is impossible). It follows that $W'$ is a power of $r'$.
Since $p$ and $q$ are not in the same orbit under $r$, no proper power of $r'$ is continuous at $p$, so $W'=\id$, a contradiction,
which proves the claim.

We now prove that $W_1=W_2$ implies $w_1=w_2$ by induction on $n=\min(|w_1|,|w_2|)$.
For $n=0$, this is the fact that $w\neq 1\Rightarrow W\neq 1$.
For $n>0$, the continuity or discontinuity of $W_1=W_2$ at $p$ tells us that $w_1$ and $w_2$ start with the same letter,
and by induction, $w_1=w_2$.
This concludes the proof that $r,r'$ generate a free semigroup in $\IET$.

Finally, the fact that $F$ is not linear, follows immediately from the following fact.
\end{proof}

\begin{fact*}
Let $G$ be a finitely generated linear group. Then there exists $C$
such that the order of an element in $G$ of finite order is at most
$C$.
\end{fact*}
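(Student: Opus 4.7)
My plan is to apply Selberg's lemma: every finitely generated subgroup $G\subset GL_n(K)$ of a general linear group (over any field $K$) contains a torsion-free subgroup of finite index. Granting this, fix such $H\leq G$ with $[G:H]=k$. For any torsion element $g\in G$, the cyclic group $\grp{g}$ meets $H$ trivially (since $H$ has no torsion), so the composition $\grp{g}\hookrightarrow G\twoheadrightarrow G/H$ is injective as a map of sets; hence the order of $g$ is at most $k$, and $C:=k$ is the desired uniform bound.

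The key step is thus to prove Selberg's lemma. First I would reduce to a finitely generated base ring: if $g_1,\dots,g_s$ generate $G$, let $R\subset K$ be the subring generated by the entries of $g_1^{\pm 1},\dots,g_s^{\pm 1}$, so that $G\subset GL_n(R)$ with $R$ a finitely generated commutative ring. By Hilbert's Nullstellensatz for rings of finite type over $\bbZ$, $R$ admits a maximal ideal $\mathfrak{m}$ with $R/\mathfrak{m}$ finite. For each $t\geq 1$, the congruence subgroup $H_t=G\cap \ker\bigl(GL_n(R)\to GL_n(R/\mathfrak{m}^t)\bigr)$ has finite index in $G$; the goal is to show that for a suitably chosen $\mathfrak{m}$ and large enough $t$, $H_t$ is torsion-free.

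This last point is the main obstacle. Writing an element of $H_t$ as $I+M$ with all entries of $M$ in $\mathfrak{m}^t$, the relation $(I+M)^\ell=I$ expands to $\ell M + \binom{\ell}{2}M^2+\dots=0$; passing to the $\mathfrak{m}$-adic completion and using valuations, this forces $M=0$ as soon as $\ell$ is a unit modulo $\mathfrak{m}$. In characteristic zero this is the classical case: there are infinitely many maximal ideals of prescribed residue characteristic, so one can pick $\mathfrak{m}$ whose residue characteristic avoids all primes that could a priori divide torsion orders in $GL_n(R/\mathfrak{m})$. In positive characteristic $p$ the same argument handles $\ell$ coprime to $p$, and one must separately bound the $p$-part of torsion, using that a $p$-torsion element in $GL_n$ over a ring of characteristic $p$ is unipotent and hence has order at most $p^{\lceil\log_p n\rceil}$. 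Combining the two bounds yields torsion-freeness of $H_t$ for a well-chosen $\mathfrak{m}$, and the coset argument of the first paragraph then produces $C$.
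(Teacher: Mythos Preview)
Your route via Selberg's lemma differs from the paper's, which instead bounds eigenvalue orders directly: the eigenvalues of a torsion element are roots of unity lying in a bounded-degree extension of the finitely generated field generated by the matrix entries, and such a field contains only finitely many roots of unity. This gives an $m$ with $g^m$ unipotent; in characteristic zero $g^m=I$, and in characteristic $p$ one finishes with $(g^m)^{p^n}=I$ via Jordan form, exactly as you do for the $p$-part.

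There is, however, a genuine error in your positive-characteristic case. Selberg's lemma is \emph{false} over fields of characteristic $p>0$, so ``torsion-freeness of $H_t$'' cannot hold. Concretely, the lamplighter group $(\bbZ/p)\wr\bbZ$ embeds in $GL_2(\bbF_p(t))$ as $\bigl\langle \left(\begin{smallmatrix}1&1\\0&1\end{smallmatrix}\right),\ \left(\begin{smallmatrix}t&0\\0&1\end{smallmatrix}\right)\bigr\rangle$, and every finite-index subgroup meets the infinite elementary abelian $p$-group $\left(\begin{smallmatrix}1&\bbF_p[t,t^{-1}]\\0&1\end{smallmatrix}\right)$ in an infinite set; your congruence subgroups $H_t$ likewise contain $\left(\begin{smallmatrix}1&f\\0&1\end{smallmatrix}\right)$ for every $f\in\mathfrak m^t$. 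The repair uses exactly your ingredients: $H_t$ is \emph{normal} of some finite index $k$, has no prime-to-$p$ torsion, and its $p$-torsion is bounded by $M=p^{\lceil\log_p n\rceil}$; hence any torsion $g\in G$ satisfies $g^k\in H_t$ and thus $g^{kM}=1$. So replace the coset-injection argument (which genuinely requires $H$ torsion-free) by this. As a minor aside, your characteristic-zero clause about picking $\mathfrak m$ to avoid ``primes that could a priori divide torsion orders in $GL_n(R/\mathfrak m)$'' is circular as written; the standard fix is to take $t$ large relative to the $\mathfrak m$-adic valuation of $p$, or to intersect two congruence subgroups with distinct residue characteristics.
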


This fact is proved
in the proof of Theorem 36.2 (Schur) in \cite{CurtisReiner},
which says that any finitely generated periodic subgroup
of $GL(n,K)$ is finite if the characteristic of the field
$K$ is zero.
In the proof, it is shown that if $G < GL(n,K)$
is a finitely generated group then there exists
$m$ such that if $g \in G$ has finite order then $g^m=1$.
The essential part of the argument is to show that
a characteristic root of $g$ is the $m$-th root of
unity. For this part, it does not matter if the characteristic
of $K$ is zero or positive.
If the characteristic of $K$ is zero, it immediately follows
that $g^m=1$.
If the characteristic $p$ is positive,
then we use a Jordan normal form of $g$, write $g^m = I+N$ with $N$ nilpotent, and see that
$(g^{m})^{p^n}=1$.



\begin{flushleft}
Fran\c cois Dahmani, Institut Fourier UMR 5582, Université de Grenoble\\
BP74, 38402 Saint Martin d'H\`eres cedex, France.\\
\texttt{francois.dahmani@ujf-grenoble.fr}\\[3mm]

Koji Fujiwara, Graduate School of Information Science, Tohoku University\\
Sendai, 980-8579, Japan.\\
\texttt{fujiwara@math.is.tohoku.ac.jp}\\[3mm]

Vincent Guirardel, Université de Rennes 1, IRMAR, CNRS UMR 6625\\
263 avenue du Général Leclerc, 35042 Rennes cedex, France.\\
\texttt{vincent.guirardel@univ-rennes1.fr}

\end{flushleft}
     \end{document}